%%%%%%%%%%%%%%%%%%%%%%%%%%%%%%%%%%%%%%%%%%%%
%
%  WILES
%
%%%%%%%%%%%%%%%%%%%%%%%%%%%%%%%%%%%%%%%%%%%%
\documentclass[12pt]{amsart}
\usepackage{amssymb,amsmath}
\usepackage{psfrag,graphicx}
\usepackage{pst-math}
\usepackage{pstricks}
\usepackage{pst-plot}
\usepackage{pst-slpe}
\usepackage{pst-all}
\oddsidemargin=-.0cm
\evensidemargin=-.0cm
\textwidth=16cm
\textheight=22cm
\topmargin=0cm
%%%%%%%%%%%%%%%%%%%%%%%%%%%%%%%%%

%%%%%%%%%%%%%%%%%%%%%%%%%%%%%%%%%
% DEFS
\def\cz{\varrho}
\def\H {{\rm H}}

\def\R {\mathbb{R}}
\def\N {\mathbb{N}}
\def\D {{\mathfrak D}}

\def\d{{\rm d}}

\def \l {\langle}
\def \r {\rangle}

\def \and {{\qquad\text{and}\qquad}}

%%%%%%%%%%%%%%%%%%%%%%%%%%%%%%%%%%%%%%%%%%%%

%%%%%%%%%%%%%%%%%%%%%%%%%%%%%%%%%%%%%%%%%%%%
\newtheorem{proposition}{Proposition}[section]
\newtheorem{theorem}[proposition]{Theorem}
\newtheorem{corollary}[proposition]{Corollary}
\newtheorem{lemma}[proposition]{Lemma}
\theoremstyle{definition}
\newtheorem{definition}[proposition]{Definition}
\newtheorem{remark}[proposition]{Remark}
\newtheorem{example}[proposition]{Example}
\numberwithin{equation}{section}
%%%%%%%%%%%%%%%%%%%%%%%%%%%%%%%%%%%%%%%%%%%%
% BIBLIOGRAPHY
\def \au {\rm}
\def \ti {\it}
\def \jou {\rm}
\def \bk {\it}
\def \no#1#2#3 {{\bf #1} (#3), #2.}
%\no{Vol}{Pag}{Year}
\def \eds#1#2#3 {#1, #2, #3.}
%\eds{Pub}{City}{Year}
%%%%%%%%%%%%%%%%%%%%%%%%%%%%%%%%%%%%%%%%%%%%%%%%%

%%%%%%%%%%%%%%%%%%%%%%%%%%%%%%%%%%%%%%%%%%%%%%%%%
\title[Elastically-coupled extensible double-beam systems]
{Steady states of elastically-coupled\\ extensible double-beam systems}

\author[F. Dell'Oro, C. Giorgi and V. Pata]
{Filippo Dell'Oro, Claudio Giorgi and Vittorino Pata}

\address{Politecnico di Milano - Dipartimento di Matematica
\newline\indent
Via Bonardi 9, 20133 Milano, Italy}
\email{filippo.delloro@polimi.it {\rm (F. Dell'Oro)}}
\email{vittorino.pata@polimi.it {\rm (V. Pata)}}

\address{Universit\`a degli Studi di Brescia - DICATAM
\newline\indent
Via Valotti 9, 25133 Brescia, Italy}
\email{claudio.giorgi@unibs.it {\rm (C.\ Giorgi)}}

\subjclass[2000]{35G30, 74B20, 74G60, 74K10}
\keywords{Coupled-beams structures,
steady states, bifurcations, buckling}
%%%%%%%%%%%%%%%%%%%%%%%%%%%%%%%%%%%%%%%%%%%%%%%%%

\begin{document}

%%%%%%%%%%%%%%%%%%%%%%%%%%%%%%%%%%%%%%%%%%%%%%%%%
\begin{abstract}
Given $\beta\in\R$ and $\cz,k>0$, we analyze an abstract version of the nonlinear stationary model
in dimensionless form
\begin{align*}
\begin{cases}
u'''' - \Big(\beta+ \cz\int_0^1 |u'(s)|^2\,\d s\Big)u'' +k(u-v) = 0\\\noalign{\vskip0.8mm}
v'''' - \Big(\beta+ \cz\int_0^1 |v'(s)|^2\,\d s\Big)v'' -k(u-v) = 0
\end{cases}
\end{align*}
describing the equilibria of an elastically-coupled extensible double-beam
system subject to evenly compressive axial loads.
Necessary and sufficient conditions
in order to have nontrivial solutions are established, and their explicit closed-form expressions are found.
In particular, the solutions are shown to exhibit at most three nonvanishing Fourier modes.
In spite of the symmetry of the system, nonsymmetric solutions appear, as well as solutions
for which the elastic energy fails to be evenly distributed.
Such a feature turns out to be of some relevance in the analysis of the longterm dynamics,
for it may lead up to nonsymmetric
energy exchanges between the two beams, mimicking
the transition from vertical to torsional oscillations.
\end{abstract}
%%%%%%%%%%%%%%%%%%%%%%%%%%%%%%%%%%%%%%%%%%%%%%%%%

\maketitle

\begin{center}
\begin{minipage}{11cm}
\footnotesize
\tableofcontents
\end{minipage}
\end{center}
\newpage

%%%%%%%%%%%%%%%%%%%%%%%%%%%%%%%%%%%%%%%%%%%%%%%%%
\section{Introduction}

\subsection{Physical motivations}
For engineering purposes, the mathematical modeling process can be
viewed as the first step towards the analysis of both static and dynamic responses of actual mechanical structures.
Nevertheless, it relies on an idealization of the physical
world, and has limits of validity that must be specified.
For a given system, different models can be constructed,
the ``best" being the simplest one able to
capture all the essential features needed in the investigation.
Among others, models of
elastic sandwich-structured composites are experiencing an increasing interest in the literature, mainly
due to their wide use in sandwich panels and their applications in many branches of modern civil,
mechanical and aerospace engineering \cite{Zenkert}.
Sandwich structures are in general symmetric, and their variety
depends on the configuration of the core. Such devices are designed to have high bending stiffness
with overall low density \cite{Davies,L}.
In particular, sandwich beams, plates and shells are flexible elastic structures built up by
attaching two thin and stiff external layers (beams, plates or shells) to a homogeneously-distributed
lightweight and thick elastic core \cite{Pl}. Their interest, which is relevant in structural mechanics,
has been recently extended even to nanostructures (see e.g.\ \cite{CK} and references therein).

Models of elastic sandwich structures can be obtained by applying either
the Euler-Bernoulli theory for beams or the Kirchhoff-Love theory for thin plates.
In this context, several papers have been devoted to the mechanical properties of elastically-connected
double Euler-Bernoulli beams systems.
For instance, free and forced transverse
vibrations of simply supported double-beam
systems have been studied in \cite{KS,O,VOK}, while the articles \cite{ZLM,ZLWL} are concerned with
the effect of compressive axial load on free and forced oscillations.
Within the framework of nanostructures, axial instability and buckling of double-nanobeam systems have been analyzed in \cite{MA,WW}.

Once a model is established, the next step is to (possibly) solve the mathematical
equations, in order to discover the nature of the system response.
In fact, the main goal is to predict and control the actual dynamics.
To this end, the analysis of the steady states, and in particular of their closed-form expressions, becomes crucial.
This is even more urgent when dealing with nonlinear systems, where the longterm dynamics
is strongly influenced by the occurrence of a rich set of stationary solutions.

\subsection{The model}
In this paper, we aim to classify the stationary solutions, finding their explicit closed-form expressions, to
symmetric elastically-coupled extensible double-beam systems. For instance,
a sandwich structure composed of two elastic beams bonded
to an elastic core (Fig.\,\ref{Fig_1}a), or
the road bed of a girder bridge composed of an elastic rug connecting two
lateral elastic beams (Fig.\,\ref{Fig_1}b).
In both cases, the mechanical structure can be described by means of
two equal beams complying with the nonlinear model of Woinowsky-Krieger \cite{W}, which
takes into account extensibility, so that large deformations are allowed.
The beams are supposed to have the same natural length $\ell>0$, constant mass density,
and common thickness $0<h\ll\ell$.
At their ends, they are simply supported
and subject to evenly distributed axial loads.
A system of linear springs models the elastic filler connecting the beams:\
when the system lies in its natural configuration,
the beams are straight and parallel. The distance between the beams is equal to the free lengths of the springs.
Denoting by $\nu\in(-1,\tfrac12)$ the Poisson ratio of the beams,
the dynamics of the resulting undamped model is ruled by the following nonlinear
equations in dimensionless form (see the final Appendix for more details about the derivation of the model)
\begin{equation}
\label{sistema}
\begin{cases}
\displaystyle
\frac{\ell (1-\nu)}{h}\Big(\partial_{tt}
- \frac{h^2}{12\ell^2}\partial_{ttxx}\Big)u +
\delta\partial_{xxxx} u
-\big(\chi +\|\partial_x u \|^2\big)\partial_{xx} u +\kappa (u-v)=0,\\\noalign{\vskip1.7mm}
\displaystyle
\frac{\ell (1-\nu)}{h}\Big(\partial_{tt}
- \frac{h^2}{12\ell^2}\partial_{ttxx}\Big) v + \delta\partial_{xxxx} v
-\big(\chi +\|\partial_x v \|^2\big)\partial_{xx} v -\kappa(u-v)=0,
\end{cases}
\end{equation}
having set
$$\|f\|=\bigg(\int_0^1 |f(s)|^2\,\d s\bigg)^\frac12.$$
In the vertical plane ($x$-$z$), system~\eqref{sistema} describes the in-plane downward
rescaled deflections of the midline of the beams\footnote{The functions
$u,v$ are appropriate rescaling of the original vertical deflections
of the midline of the two beams
$$
U,V:[0,\ell]\times \R^+ \to \R,
$$
in comply with the dimensionless character of system~\eqref{sistema}.
See the Appendix for more details.}
$$u,v:[0,1]\times\R^+\to\R$$
with respect to their natural configuration (see Fig.\,\ref{Fig_1}a).
It may be also used to describe out-of-plane rescaled deflections
of the same double-beam structure, accounting for both vertical and torsional oscillations
(see Fig.\,\ref{Fig_1}b). In the latter situation, each beam is assumed to swing in a
vertical plane and the lateral movements are neglected.
The structural constants $\delta,\kappa>0$ are related to
the common flexural rigidity of the beams and the common stiffness of the inner elastic springs,
respectively, whereas the parameter $\chi\in\R$ summarizes the effect of the axial force acting at the right ends of the
beams:\ positive when the beams are stretched, negative when compressed.

%%%%%%%%%%%%%%%%%%%%%%%%%%%%%%%%%%%%%%%%%%%%%%
\begin{figure}[ht]
\includegraphics[width=7.3cm]{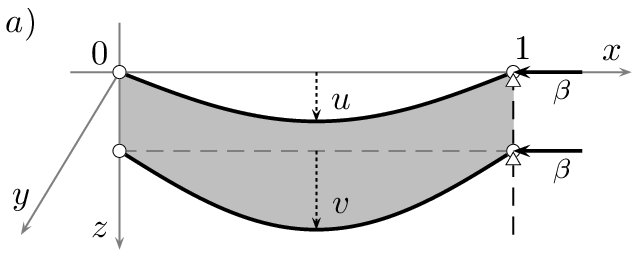}
\hspace{3.5mm}
\includegraphics[width=7.3cm]{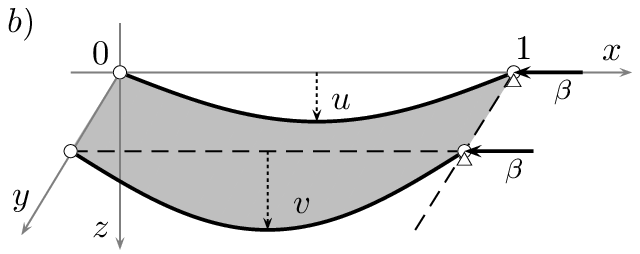}
\caption{In-plane (a) and out-of-plane (b) deflections of a double-beam system
under compressive axial loads $\beta={\chi}/{\delta}.$
\label{Fig_1}}
\end{figure}
%%%%%%%%%%%%%%%%%%%%%%%%%%%%%%%%%%%%%%%%%%%%%%

In this work, we are interested in the stationary solutions to
the evolutionary problem~\eqref{sistema}, subject to the hinged boundary conditions.
Namely, setting
$$\beta=\frac{\chi}{\delta}\in\R,\qquad \cz = \frac{1}{\delta}>0,\qquad k=\frac{\kappa}{\delta}>0,$$
we consider the dimensionless system of ODEs
\begin{equation}
\label{sistema_staz}
\begin{cases}
u'''' - \big(\beta+\cz\|u'\|^2\big)u''+k(u-v) = 0,\\
\noalign{\vskip1mm}
v''''- \big(\beta+\cz\|v'\|^2\big)v''-k(u-v) = 0,
\end{cases}
\end{equation}
supplemented with the boundary conditions
\begin{equation}
\label{BC_sistema}
\begin{cases}
u(0)=u(1)=u''(0)=u''(1)=0,\\\noalign{\vskip0.5mm}
v(0)=v(1)=v''(0)=v''(1)=0.
\end{cases}
\end{equation}
It is apparent that problem
\eqref{sistema_staz}-\eqref{BC_sistema} always admits the trivial solution $u=v=0$, while
the occurrence and the complexity of nontrivial solutions strongly depend on the values of
%%%%%%%%% modificato %%%%%%%%%
structural dimensionless parameters $\beta,\cz,k$, all of which are allowed to be large (see the final comment in the Appendix).
%%%%%%%%%%%%%%%%%%%%%%%%%

\subsection{Earlier results on single-beam equations}
When system  \eqref{sistema_staz} is uncoupled (i.e.\ in the limit situation when $k=0$), the analysis reduces to the one
of the single Woinowsky-Krieger beam
$$
u'''' - \big(\beta+ \cz\|u'\|^2\big)u''=0.
$$
In this case, it is well-known that an increasing compressive axial load leads to a series of fork bifurcations.
The critical values of $\beta$ at which bifurcations occur depend on the eigenvalues of the differential operator
(see e.g.\ \cite{B1,CZGP}). After exceeding these values, the axial compression is sustained in one of two states of
equilibrium:\ a purely compressed state with no lateral deviation (the trivial solution) or two symmetric laterally-deformed
configurations (buckled solutions). This is why the phenomenon is usually referred to as {\it buckling}.
Another interesting model, formally obtained by neglecting
the second equation of system \eqref{sistema_staz} and by taking $v\equiv0$ in the first one, reads
$$
u'''' - \big(\beta+ \cz\|u'\|^2\big)u''+ku=0,
$$
namely, a single Woinowsky-Krieger beam which relies on an elastic foundation.
In this case, bifurcations of the trivial solution split into two series,
whose critical values depend
also on the ratio $k$ between the parameters $\kappa$ and $\delta$ connected with the stiffness of
the foundation and the flexural rigidity of the beam \cite{BoV}.

\subsection{The goal of the present work}
Clearly, when the double-beam system \eqref{sistema_staz} is considered,
the picture becomes much more difficult.
To the best of our knowledge, in spite of the quite large number of papers about statics and dynamics of single
Woinowsky-Krieger beams (e.g.\ \cite{B1,BoV,CZ,CZGP,Dick,Dick2,EM,GPV,HM,RM}),
no analytic results concerning models with a coupling between two (or more) nonlinear beams
of this type are available in the literature.
This may be due to the fact that classifying and finding closed-form expressions for the solutions to
equations of this kind is in general a very difficult, if not impossible, task. Indeed, it is usually
unavoidable to replace distributed characteristics with discrete
ones, so producing approximate solutions by resorting to some discretization procedures. Unfortunately, this
strategy can be hardly applied when multiple stable states occur (see e.g.\ \cite{L} and references therein).

Here, our aim is to fill this gap.
To this end, we first recast \eqref{sistema_staz}-\eqref{BC_sistema} into an abstract nonlinear system
involving an arbitrary strictly positive selfadjoint linear operator $A$ with compact inverse.
Then, we classify all the nontrivial solutions, finding also their explicit expressions.
In particular, every solution is shown to exhibit at most three nonvanishing Fourier modes.
According to our classification, the set of stationary solutions to
nonlinear double-beam systems is very rich.
The nonlinear terms accounting for extensibility substantially
influence the instability (or buckling):\ the effects are higher with
increasing values of (minus) the axial-load parameter $\beta$, and give rise to both in-phase (synchronous)
buckling modes and out-of-phase (asynchronous) buckling modes. This feature becomes quite important
in the study of the longterm behavior, as it may lead up to nonsymmetric energy exchanges
between the two beams under small perturbations. In the
asymptotic dynamics of a double-beam structure like the road bed of a girder bridge (Fig.\,\ref{Fig_1}b),
a nonsymmetric energy exchange of this kind is apt to mimic
the transition from vertical to torsional oscillations, such as those occurred
in the collapse of the Tacoma Narrows suspension bridge (see e.g.\ \cite{McK} and references therein).
Another remarkable fact is that the model \eqref{sistema_staz} has been derived under the assumption
that the ratio $h/\ell$ between the thickness and the natural length of the beam is very small;
the critical values at which bifurcations occur are consistent with such an assumption, namely,
they are of order $h/\ell$ as well.
We also stress that system~\eqref{sistema_staz} is dimensionless, and no physical parameters have been artificially
set equal to one.
Finally it is worth noting that, as a consequence of the abstract formulation,
all the results are valid also for multidimensional structures. In particular, they are applicable
to flexible double-plate sandwich structures
with hinged boundaries, provided that the plates are modeled according to the Berger's approach \cite{ADR,KAM}.

\subsection{Plan of the paper}
In the next \S2 we introduce the aforementioned operator $A$, and we rewrite \eqref{sistema_staz}-\eqref{BC_sistema}
in an abstract form. In \S3 we prove that every solution can be expressed as a linear combination
of at most three distinct eigenvectors of $A$.
The subsequent \S4 deals with the analysis of unimodal solutions (i.e.\ solutions with only one eigenvector involved).
In particular, we show that not only a double series of
fork bifurcations of the trivial solution occur, but also buckled solutions
may suffer from a further bifurcation when $-\beta$ exceeds some greater critical
value. In \S5 we study the so-called
equidistributed energy solutions (i.e.\ solutions with evenly distributed
elastic energy), and we prove that bimodal and trimodal
steady states pop up.
In \S6 we classify the general (not necessarily equidistributed) bimodal solutions, while
in \S7 we show that every trimodal solution is necessarily an equidistributed energy solution,
The final \S8 is devoted to a comparison with some single-beam equations previously studied in the literature.
The derivation of the evolutionary physical model \eqref{sistema} is carried out in full detail in the concluding Appendix.
%%%%%%%%%%%%%%%%%%%%%%%%%%%%%%%%%%%%%%%%%%%%%%%%%

%%%%%%%%%%%%%%%%%%%%%%%%%%%%%%%%%%%%%%%%%%%%%%%%%
\section{The Abstract Model}

\noindent
Let $(\H,\l \cdot,\cdot \r, \|\cdot\| ) $ be a separable real Hilbert space, and let
$$A: \D(A)\Subset \H \to \H$$ be a strictly positive selfadjoint linear operator, where the
(dense) embedding $\D(A)\Subset \H$ is compact. In particular, the inverse $A^{-1}$ of $A$
turns out to be a compact operator on $\H$.
Accordingly, for $r\geq0$, we introduce the compactly nested family of Hilbert spaces
(the index $r$ will be omitted whenever zero)
$$
\H^r=\D(A^{\frac{r}2}),\qquad\l u,v\r_r=\l A^{\frac{r}2}u,A^{\frac{r}2}v\r,\qquad
\|u\|_r=\|A^{\frac{r}2}u\|.
$$
Then, given $\beta\in \R$ and $\cz,k>0$, we consider the abstract nonlinear stationary problem
in the unknown variables $(u,v)\in\H^2\times\H^2$
\begin{equation}
\label{MAIN}
\begin{cases}
A^2 u + C_u A u + k(u-v)=0,\\\noalign{\vskip0.7mm}
A^2 v + C_v Av -k(u-v)=0,
\end{cases}
\end{equation}
where
\begin{equation}
\label{Bautc}
C_u=\beta + \cz\|u\|_1^2 \and C_v=\beta + \cz\|v\|_1^2.
\end{equation}

\begin{definition}
A couple $(u,v)\in \H^2\times \H^2$ is called a {\it weak solution} to \eqref{MAIN} if
\begin{equation}
\label{weak}
\begin{cases}
\l u , \phi \r_2 + C_u \l u , \phi \r_1 + k\l (u-v) , \phi \r=0,\\\noalign{\vskip0.7mm}
\l v , \psi \r_2 + C_v \l v , \psi \r_1  -k\l (u-v) , \psi \r=0,
\end{cases}
\end{equation}
for every test $(\phi,\psi)\in \H^2 \times \H^2$.
\end{definition}

It is apparent that the trivial solution $u=v=0$ always exists.

\begin{example}
\label{dlo}
The concrete physical system \eqref{sistema_staz} is recovered by setting $\H=L^2(0,1)$ and
$A=L$, where
$$
L = -\frac{\d^2}{\d x^2} \qquad \text{with}\qquad \D(L)=H^2(0,1) \cap H_0^1(0,1).
$$
Here $L^2(0,1)$, as well as $H_0^1(0,1)$ and $H^2(0,1)$, denote the usual Lebesgue and Sobolev spaces
on the unit interval $(0,1)$.
In particular
$$
\H^2 = H^2(0,1) \cap H_0^1(0,1)\,\,\,\Subset\,\,\, \H^1 = H_0^1(0,1) \,\,\,\Subset\,\,\, \H=L^2(0,1).
$$
\end{example}

\smallskip
\noindent
{\bf Notation.}
For any $n\in\N = \{1,2,3,\ldots\}$ we denote by
$$
0<\lambda_n\to\infty
$$
the increasing sequence of eigenvalues of $A$, and by $e_n\in \H$ the corresponding normalized eigenvectors,
which form a complete orthonormal basis of $\H$. In this work, all the eigenvalues $\lambda_n$
are assumed to be {\it simple}, which is certainly true for the concrete realization
$A=L$ arising in the considered
physical models. Indeed, in such a case, the eigenvalues are equal to
$$
\lambda_n = n^2\pi^2
$$
with corresponding eigenvectors
$$
e_n(x)= \sqrt{2} \sin{(n\pi x)}.
$$
%%%%%%%%%%%%%%%%%%%%%%%%%%%%%%%%%%%%%%%%%%%%%%%%%

%%%%%%%%%%%%%%%%%%%%%%%%%%%%%%%%%%%%%%%%%%%%%%%%%
\section{General Structure of the Solutions}

\noindent
In this section we provide two general results on the solutions to system~\eqref{MAIN}.
To this end, we introduce the set of {\it effective modes}
$$
\mathbb{E} = \{ n : \lambda_n<-\beta \}.
$$
Clearly,
\begin{equation}
\label{basic}
\mathbb{E} \not=\emptyset \quad \Leftrightarrow\quad \beta<-\lambda_1.
\end{equation}
Therefore, if $\mathbb{E}\neq\emptyset$,
$$
\mathbb{E} = \{1,2,\ldots,n_\star \},
$$
where\footnote{Here and in what follows
$|\mathbb{S}|$ denotes the cardinality of a set $\mathbb{S}\subset\N$.}
$$
n_\star = \max \{n:\lambda_n<-\beta \}= |\mathbb{E}|.
$$

\begin{example}
When $A=L$ (the Laplace-Dirichlet operator introduced in the previous section), we have
$$
\mathbb{E} = \big\{ n : n^2 \pi^2<-\beta\big\}.
$$
Accordingly, in the nontrivial case $\beta<0$,
$$
|\mathbb{E}| = \left\lceil \sqrt{-\frac{ \beta}{\pi^2}} \,\,\right\rceil -1,
$$
the symbol $\lceil a \rceil$ standing for the smallest integer greater than or equal to $a$.
\end{example}

We begin to prove that the picture is trivial whenever the set $\mathbb{E}$ is empty.

\begin{proposition}
\label{trivial}
If $\mathbb{E}=\emptyset$ system \eqref{MAIN} admits only the trivial solution.
\end{proposition}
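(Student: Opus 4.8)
The plan is to test the weak formulation \eqref{weak} against the solutions themselves and exploit positivity. Taking $\phi = u$ in the first equation and $\psi = v$ in the second, and then adding, I would obtain
\begin{equation*}
\|u\|_2^2 + \|v\|_2^2 + C_u\|u\|_1^2 + C_v\|v\|_1^2 + k\|u-v\|^2 = 0.
\end{equation*}
Since $k\|u-v\|^2 \ge 0$, it suffices to show that the remaining four terms are nonnegative; in fact I will show they are strictly positive unless $u=v=0$. The key point is the spectral inequality $\|w\|_2^2 \ge \lambda_1 \|w\|_1^2$ for every $w\in\H^2$, which follows from expanding $w$ in the eigenbasis $\{e_n\}$ and using $\lambda_n \ge \lambda_1$.

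With this, I estimate $\|u\|_2^2 + C_u\|u\|_1^2 \ge (\lambda_1 + C_u)\|u\|_1^2 = (\lambda_1 + \beta + \cz\|u\|_1^2)\|u\|_1^2$, and similarly for $v$. Now invoke the hypothesis $\mathbb{E}=\emptyset$: by \eqref{basic} this is equivalent to $\beta \ge -\lambda_1$, i.e.\ $\lambda_1 + \beta \ge 0$. Hence $\lambda_1 + \beta + \cz\|u\|_1^2 \ge \cz\|u\|_1^2 \ge 0$, so $\|u\|_2^2 + C_u\|u\|_1^2 \ge \cz\|u\|_1^4 \ge 0$, and likewise $\|v\|_2^2 + C_v\|v\|_1^2 \ge \cz\|v\|_1^4 \ge 0$. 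Substituting back into the identity forces every term to vanish; in particular $\cz\|u\|_1^4 = \cz\|v\|_1^4 = 0$, and since $\cz>0$ this gives $\|u\|_1 = \|v\|_1 = 0$. As $\|\cdot\|_1$ is a genuine norm on $\H^1$ (because $A$ is strictly positive), we conclude $u=v=0$.

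There is essentially no hard obstacle here: the only subtlety is making sure the chosen test functions are admissible, i.e.\ that $(u,u)$ and $(v,v)$ lie in $\H^2\times\H^2$, which is immediate since a weak solution is by definition in $\H^2\times\H^2$. A minor bookkeeping point is that the coupling term must be handled jointly for the two equations (adding them so that the cross terms $k\l u-v, u\r - k\l u-v, v\r$ combine into $k\|u-v\|^2$) rather than equation by equation; treated separately the coupling terms have no sign. One could equivalently phrase the whole argument variationally, noting that solutions are critical points of an energy functional which, when $\mathbb{E}=\emptyset$, is strictly convex and coercive with unique minimizer at the origin, but the direct energy identity above is the shortest route.
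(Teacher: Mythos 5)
Your proof is correct and follows essentially the same route as the paper's: testing with $(\phi,\psi)=(u,v)$, adding, applying the Poincar\'e inequality $\|w\|_2^2\geq\lambda_1\|w\|_1^2$, and using $\lambda_1+\beta\geq0$ to force every term to vanish. No substantive difference.
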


\begin{proof}
Let $(u,v)$ be a weak solution to \eqref{MAIN}. Choosing $(\phi,\psi)=(u,v)$ in the weak formulation \eqref{weak},
and adding the resulting expressions, we obtain the identity
$$
\|u\|_2^2 + \|v\|_2^2 + (\beta + \cz\|u\|_1^2)\|u\|_1^2 + (\beta + \cz\|v\|_1^2)\|v\|_1^2 + k\|u-v\|^2=0.
$$
Then, exploiting the Poincar\'e inequality
$$
\lambda_1\|w\|_1^2 \leq \|w\|_2^2,\quad \forall w \in \H^2,
$$
we infer that
$$
(\lambda_1+\beta)(\|u\|_1^2 + \|v\|_1^2) + \cz\|u\|_1^4 + \cz\|v\|_1^4+ k\|u-v\|^2\leq0,
$$
and, since $\lambda_1+\beta\geq0$, we conclude that $u=v=0$.
\end{proof}

Accordingly, from now on
we will assume (often without explicit mention) that \eqref{basic} be satisfied.
As it will be clear from the subsequent analysis, this condition turns out to be sufficient
as well in order to have nontrivial solutions. Hence, {\it a posteriori}, we can reformulate
Proposition \ref{trivial} by saying that system \eqref{MAIN} admits nontrivial solutions if and only
if the set $\mathbb{E}$ is nonempty.

\smallskip
The next result shows that every weak solution can be written as linear combination of at most
three distinct eigenvectors of $A$.

\begin{lemma}
\label{gene}
Let $(u,v)$ be a weak solution of system \eqref{MAIN}. Then
$$
u = \sum_n \alpha_{n} e_{n}
\and v = \sum_n \gamma_{n} e_{n}
$$
for some $\alpha_n,\gamma_n\in\R$, where $\alpha_n\not=0$ for at most three distinct values of $n\in\N$.
Moreover,
$$\alpha_n=0\quad \Leftrightarrow\quad \gamma_n=0.$$
\end{lemma}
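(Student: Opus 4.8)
The natural approach is to project the system onto the eigenbasis $\{e_n\}$ of $A$, exploiting the fact that along any \emph{fixed} weak solution the Woinowsky--Krieger nonlinearity degenerates to multiplication by the two real constants $C_u,C_v$ of \eqref{Bautc}. Writing $u=\sum_n\alpha_ne_n$ and $v=\sum_n\gamma_ne_n$ (legitimate since $u,v\in\H^2=\D(A)$), and testing the weak formulation \eqref{weak} with the admissible pair $(\phi,\psi)=(e_n,e_n)\in\H^2\times\H^2$, one uses $\l u,e_n\r_2=\lambda_n^2\alpha_n$ and $\l u,e_n\r_1=\lambda_n\alpha_n$ (and likewise for $v$) to obtain, for every $n\in\N$, the decoupled $2\times 2$ algebraic system
\[
(\lambda_n^2+C_u\lambda_n+k)\,\alpha_n-k\,\gamma_n=0,\qquad
-k\,\alpha_n+(\lambda_n^2+C_v\lambda_n+k)\,\gamma_n=0.
\]

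From the first equation, $\alpha_n=0$ forces $k\gamma_n=0$, hence $\gamma_n=0$ since $k>0$; the converse follows symmetrically from the second equation, giving the claimed equivalence $\alpha_n=0\Leftrightarrow\gamma_n=0$. For the three-modes bound, I would observe that whenever $(\alpha_n,\gamma_n)\neq(0,0)$ the coefficient matrix must be singular, i.e.\ $p(\lambda_n)=0$, where
\[
p(\lambda):=(\lambda^2+C_u\lambda+k)(\lambda^2+C_v\lambda+k)-k^2.
\]
Now $p$ is a monic quartic whose constant term equals $k^2-k^2=0$; therefore $p(\lambda)=\lambda\,q(\lambda)$ with $q$ a cubic polynomial, which admits at most three real roots. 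Since the eigenvalues $\lambda_n$ are strictly positive and pairwise distinct, at most three of them can annihilate $q$, and hence $\alpha_n\neq 0$ for at most three distinct values of $n$.

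I do not anticipate a substantial obstacle: the whole argument rests on two elementary observations. The first is that $C_u$ and $C_v$, though depending on the (unknown) solution, are mere scalars, so that the problem genuinely splits mode by mode — this is the structural feature that makes the model tractable. The second is the vanishing of the constant term of $p$, a direct consequence of the special algebraic form of the elastic coupling, which is precisely what lowers the count from the four roots one would naively expect to three. The remaining verifications — the identities $\l u,e_n\r_r=\lambda_n^{r}\alpha_n$ and the convergence of the Fourier expansions — are routine.
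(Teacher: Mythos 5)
Your proposal is correct and follows essentially the same route as the paper: project onto the eigenbasis, test with $(\phi,\psi)=(e_n,e_n)$ to obtain the decoupled $2\times2$ system, read off $\alpha_n=0\Leftrightarrow\gamma_n=0$ from $k>0$, and observe that a nonzero mode forces the vanishing of the determinant $(\lambda_n^2+C_u\lambda_n+k)(\lambda_n^2+C_v\lambda_n+k)-k^2$, whose zero constant term reduces the root count to that of the cubic $\lambda^3+(C_u+C_v)\lambda^2+(C_uC_v+2k)\lambda+k(C_u+C_v)$ — exactly the polynomial $P(\lambda)$ the paper extracts. The only cosmetic difference is that you phrase the key step via singularity of the coefficient matrix and factoring out $\lambda$, while the paper substitutes one equation into the other and divides by $\lambda_n>0$; the content is identical.
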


\begin{proof}
Let $(u,v)$ be a weak solution to \eqref{MAIN}. Then, writing
$$
u = \sum_{n} \alpha_n e_n \and v = \sum_{n} \gamma_n e_n
$$
for some $\alpha_n,\gamma_n\in\R$,
and choosing $\phi=\psi=e_n$ in the weak formulation \eqref{weak}, we obtain for every $n\in\N$ the system
\begin{equation}
\label{ognin}
\begin{cases}
\lambda_n^2 \alpha_n + C_u\lambda_n\alpha_n + k(\alpha_n-\gamma_n)=0,\\
\lambda_n^2 \gamma_n + C_v\lambda_n\gamma_n - k(\alpha_n-\gamma_n)=0.
\end{cases}
\end{equation}
It is apparent that
$$\alpha_n=0 \quad\Leftrightarrow\quad \gamma_n=0.$$
Substituting the first equation into the second one, we get
$$
\gamma_n(\lambda_n^2 + C_v\lambda_n + k)(\lambda_n^2 + C_u\lambda_n + k) = k^2\gamma_n.
$$
Hence, if $\gamma_n\not=0$ (and so $\alpha_n\not=0$), we end up with
$$
\lambda_n^3 + (C_u+C_v)\lambda_n^2  + (C_uC_v + 2k)\lambda_n + k(C_u+C_v) = 0.
$$
Since the equation above admits at most three distinct solutions
$\lambda_{n_i}$ we are done.
\end{proof}

Summarizing, every weak solution $(u,v)$ can be written as
\begin{equation}
\label{espressione}
u=\sum_{i=1}^3 \alpha_{n_i} e_{n_i} \and v=\sum_{i=1}^3 \gamma_{n_i} e_{n_i},
\end{equation}
for three distinct $n_i\in\N$ and some coefficients $\alpha_n{_i},\gamma_{n_i}\in\R$.
In particular, from \eqref{Bautc}, we deduce the explicit expressions
\begin{equation}
\label{Bautc1}
C_u = \beta + \cz\sum_{i=1}^3 \lambda_{n_i}\alpha_{n_i}^2\and
C_v = \beta + \cz\sum_{i=1}^3 \lambda_{n_i}\gamma_{n_i}^2.
\end{equation}
In addition, when
$$\alpha_{n_i}\not=0\quad \Leftrightarrow\quad \gamma_{n_i}\not=0,$$
the corresponding eigenvalue $\lambda_{n_i}$
is a root of the cubic polynomial
$$
P(\lambda)= \lambda^3 + (C_u+C_v)\lambda^2  + (C_uC_v + 2k)\lambda + k(C_u+C_v).
$$
Notably, when the equality $C_u=C_v$ holds, the polynomial $P(\lambda)$ can be written in the simpler form
$$
P(\lambda) = (\lambda+C_u)(\lambda^2 +C_u\lambda + 2k).
$$

\begin{remark}
Adding the two equations of system \eqref{ognin}, we infer that
\begin{equation}
\label{rel}
\lambda_n = - \frac{C_u\alpha_n+C_v\gamma_n}{\alpha_n+\gamma_n}
\end{equation}
whenever $\alpha_n+\gamma_n\not=0$. This relation will be crucial for our purposes.
\end{remark}

As an immediate consequence of Lemma \ref{gene}, we also have

\begin{corollary}
Every weak solution $(u,v)$ is actually a strong solution. Namely,
$(u,v)\in \H^4 \times \H^4$ and \eqref{MAIN} holds.
Even more so, $(u,v)\in \H^r\times \H^r$ for every $r$.
\end{corollary}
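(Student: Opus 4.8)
The plan is to exploit the finite-mode structure already established in Lemma~\ref{gene}. A weak solution $(u,v)$ has, by that lemma, the form \eqref{espressione}, i.e.\ it is a \emph{finite} linear combination of at most three eigenvectors $e_{n_i}$ of $A$. Hence for every $r\geq 0$ one has $A^{r/2}u = \sum_{i=1}^3 \lambda_{n_i}^{r/2}\alpha_{n_i}e_{n_i}$ and $A^{r/2}v = \sum_{i=1}^3 \lambda_{n_i}^{r/2}\gamma_{n_i}e_{n_i}$, which are finite sums and therefore elements of $\H$. This immediately gives $(u,v)\in\H^r\times\H^r$ for every $r$, and in particular for $r=4$, so that the terms $A^2 u$, $A^2 v$, $A u$, $A v$ appearing in \eqref{MAIN} all make sense as elements of $\H$.

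It then remains to verify that \eqref{MAIN} holds as an identity in $\H$, not merely in the weak sense. Here I would simply recall that testing the weak formulation \eqref{weak} against $\phi=\psi=e_n$ produces, for every $n\in\N$, the scalar relations \eqref{ognin}; these are precisely the statements that the $n$-th Fourier coefficients of the vectors $A^2u+C_uAu+k(u-v)$ and $A^2v+C_vAv-k(u-v)$ vanish. Since both vectors belong to $\H$ (being finite combinations of eigenvectors, by the first step) and $\{e_n\}$ is a complete orthonormal basis of $\H$, the vanishing of all their Fourier coefficients forces them to be the zero vector. This is exactly \eqref{MAIN}, and the ``even more so'' part has already been obtained in the first step.

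I do not expect any genuine obstacle: the entire content of the corollary is a direct consequence of the finiteness of the eigenvector expansion from Lemma~\ref{gene}, combined with the elementary fact that a finite linear combination of eigenvectors of $A$ lies in $\D(A^{r/2})$ for every $r$. The only mild point of care is the passage from the weak formulation to the strong equation, which relies on completeness of the eigenbasis and on the fact that the relevant combinations are honest elements of $\H$ — both of which are immediate once the finite-mode structure is invoked.
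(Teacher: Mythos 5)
Your proof is correct and follows exactly the route the paper intends: the paper states this corollary without proof as an ``immediate consequence'' of Lemma~\ref{gene}, and your argument --- finitely many eigenmodes give membership in every $\H^r$, and testing \eqref{weak} against each $e_n$ plus completeness of the eigenbasis upgrades the weak identities to \eqref{MAIN} in $\H$ --- is precisely the intended justification.
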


\begin{remark}
In the concrete situation when $A=L$, every
weak solution $(u,v)$ is regular, that is, $(u,v)\in\mathcal{C}^\infty([0,1])\times\mathcal{C}^\infty([0,1])$.
\end{remark}

Finally, in the light of Lemma \ref{gene}, we give the following definition.

\begin{definition}
We call a solution $(u,v)$ {\it unimodal}, {\it bimodal} or {\it trimodal} if it involves one, two
or three distinct eigenvectors, that is,
if $\alpha_{n}\not=0$ (and so $\gamma_{n}\not=0$) for one, two or three indexes $n$, respectively.
\end{definition}
%%%%%%%%%%%%%%%%%%%%%%%%%%%%%%%%%%%%%%%%%%%%%%%%%

%%%%%%%%%%%%%%%%%%%%%%%%%%%%%%%%%%%%%%%%%%%%%%%%%
\section{Unimodal Solutions}
\label{sezioneUnimodal}

\noindent
We now focus on unimodal solutions. More precisely, we look for solutions $(u,v)$ of the form
\begin{equation}
\label{form}
\begin{cases}
u = \alpha_n e_n,\\
v = \gamma_n e_n,
\end{cases}
\end{equation}
for a fixed $n\in\N$ and some coefficients $\alpha_n,\gamma_n\not=0$.
In order to classify such solutions, we introduce the positive
sequences\footnote{Observe that $\lambda_n<\mu_n<\nu_n$.}
$$
\mu_n = \frac{2k}{\lambda_n} + \lambda_n\and
\nu_n = \frac{3k}{\lambda_n} + \lambda_n,
$$
along with the (disjoint) subsets of $\mathbb{E}$
\begin{align*}
&\mathbb{E}_1=\{n: \lambda_n < -\beta \leq \mu_n \},\\
&\mathbb{E}_2=\{n: \mu_n < -\beta \leq \nu_n \},\\
&\mathbb{E}_3=\{n: \nu_n < -\beta \}.
\end{align*}
Clearly,
$$
\mathbb{E}_1 \cup \mathbb{E}_2 \cup \mathbb{E}_3 = \mathbb{E}.
$$
Then, we consider the real numbers (whenever defined)
\begin{equation}
\label{ALPHA}
\begin{cases}
\displaystyle\alpha_{n,1}^{\pm}=\pm \sqrt{\frac{-\beta-\lambda_n}{\cz{\lambda_n}}},
\\\noalign{\vskip1mm}
\displaystyle\alpha_{n,2}^{\pm}=\pm \sqrt{\frac{-\beta-\mu_n}{\cz{\lambda_n}}},
\\\noalign{\vskip1mm}
\displaystyle\alpha_{n,3}^{\pm}=\pm \sqrt{\frac{ -\beta +\mu_n - \nu_n - \lambda_n  + \sqrt{(\beta+\lambda_n + \mu_n - \nu_n)
(\beta+\nu_n)}}{2\cz\lambda_n}},\\\noalign{\vskip1mm}
\displaystyle\alpha_{n,4}^{\pm}=\pm \sqrt{\frac{ -\beta +\mu_n - \nu_n - \lambda_n - \sqrt{(\beta+\lambda_n + \mu_n - \nu_n)
(\beta+\nu_n)}}{2\cz\lambda_n}},
\end{cases}
\end{equation}
hereafter called {\it unimodal amplitudes},
or {\sc u}-{\it amplitudes} for brevity.
By elementary calculations, one can easily verify that
\begin{align*}
&\alpha_{n,1}^\pm \in \R \quad \Leftrightarrow\quad \lambda_n \leq -\beta,\\
&\alpha_{n,2}^\pm \in \R \quad \Leftrightarrow\quad \mu_n \leq -\beta,\\
&\alpha_{n,3}^\pm \in \R \quad \Leftrightarrow\quad \nu_n \leq -\beta,\\
&\alpha_{n,4}^\pm \in\R \quad \Leftrightarrow\quad \nu_n \leq -\beta.
\end{align*}

\begin{lemma}
\label{reale}
For every fixed $n\in\N$, let us consider the set
$$
\Gamma_n = \{\alpha_{n,i}^\pm : i=1,2,3,4\}.
$$
Then, $\Gamma_n$ contains exactly
\begin{itemize}
\item 2 distinct nontrivial {\sc u}-amplitudes $\{\alpha_{n,1}^\pm\}$ if $n\in\mathbb{E}_1$;

\smallskip
\item 4 distinct nontrivial {\sc u}-amplitudes $\{\alpha_{n,1}^\pm,\alpha_{n,2}^\pm\}$
if $n\in\mathbb{E}_2$;

\smallskip
\item 8 distinct nontrivial {\sc u}-amplitudes $\{\alpha_{n,1}^\pm,\alpha_{n,2}^\pm,
\alpha_{n,3}^\pm,\alpha_{n,4}^\pm\}$ if $n\in\mathbb{E}_3$.
\end{itemize}
If $n\notin\mathbb{E}$, the set $\Gamma_n$ is either empty or
it contains exactly the (trivial) {\sc u}-amplitudes $\alpha_{n,1}^+=\alpha_{n,1}^-=0$.
\end{lemma}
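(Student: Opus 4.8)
The plan is to read everything off the closed-form expressions in \eqref{ALPHA} together with the four reality equivalences displayed just before the statement, reducing the claim to a finite case analysis governed by the position of $-\beta$ relative to the ordered triple $\lambda_n<\mu_n<\nu_n$. Throughout I will use the elementary identities $\mu_n-\lambda_n=\frac{2k}{\lambda_n}$, $\nu_n-\mu_n=\frac{k}{\lambda_n}$, $\nu_n-\lambda_n=\frac{3k}{\lambda_n}$, and the obvious remark that $\alpha_{n,i}^+$ and $\alpha_{n,i}^-$ are distinct precisely when they do not vanish, so that counting distinct nontrivial {\sc u}-amplitudes amounts to counting the distinct strictly positive values among the squares $(\alpha_{n,i}^\pm)^2$. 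The two extreme regimes are immediate. If $n\notin\mathbb{E}$, i.e.\ $-\beta\le\lambda_n$, then $(\alpha_{n,1}^\pm)^2=\frac{-\beta-\lambda_n}{\cz\lambda_n}\le0$, so $\alpha_{n,1}^\pm$ is real only when $-\beta=\lambda_n$, in which case it equals $0$, while $\mu_n,\nu_n>-\beta$ forces $\alpha_{n,2}^\pm,\alpha_{n,3}^\pm,\alpha_{n,4}^\pm$ to be non-real; this gives the last assertion of the lemma. If $n\in\mathbb{E}_1$, i.e.\ $\lambda_n<-\beta\le\mu_n$, then $\alpha_{n,1}^\pm\in\R\setminus\{0\}$ are two distinct values, $\alpha_{n,2}^\pm$ is real only when $-\beta=\mu_n$, where it vanishes, and $\alpha_{n,3}^\pm,\alpha_{n,4}^\pm$ are never real since $\mu_n<\nu_n$; hence the nontrivial elements of $\Gamma_n$ are exactly $\alpha_{n,1}^\pm$.

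For $n\in\mathbb{E}_2$, i.e.\ $\mu_n<-\beta\le\nu_n$, both $\alpha_{n,1}^\pm$ and $\alpha_{n,2}^\pm$ are real and nonzero, and $(\alpha_{n,1}^\pm)^2-(\alpha_{n,2}^\pm)^2=\frac{2k}{\cz\lambda_n^2}>0$ makes the two squares distinct, producing four nontrivial {\sc u}-amplitudes; I then have to verify that $\alpha_{n,3}^\pm,\alpha_{n,4}^\pm$ add nothing. When $\mu_n<-\beta<\nu_n$ the radicand $(\beta+\lambda_n+\mu_n-\nu_n)(\beta+\nu_n)$ is the product of a negative and a positive factor, so those amplitudes are not real; and on the boundary $-\beta=\nu_n$ the inner radical is $0$ and a one-line substitution (using $-\beta-\mu_n=\nu_n-\mu_n=\frac{k}{\lambda_n}$) shows $(\alpha_{n,3}^\pm)^2=(\alpha_{n,4}^\pm)^2=\frac{-\beta-\mu_n}{\cz\lambda_n}=(\alpha_{n,2}^\pm)^2$, i.e.\ they collapse onto $\alpha_{n,2}^\pm$. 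Either way $\Gamma_n$ has exactly the four nontrivial elements $\alpha_{n,1}^\pm,\alpha_{n,2}^\pm$.

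Finally, for $n\in\mathbb{E}_3$, i.e.\ $\nu_n<-\beta$, all eight quantities are real and it only remains to show that the four squares $(\alpha_{n,i}^\pm)^2$, $i=1,\dots,4$, are strictly positive and pairwise distinct. For $i=1,2$ this is as above, with $(\alpha_{n,1}^\pm)^2>(\alpha_{n,2}^\pm)^2>0$. For $i=3,4$ I will pass to the symmetric functions of $2\cz\lambda_n(\alpha_{n,3}^\pm)^2$ and $2\cz\lambda_n(\alpha_{n,4}^\pm)^2$: their sum is $2\big(-\beta-\lambda_n\big)-\frac{2k}{\lambda_n}$ and their product is $\frac{4k^2}{\lambda_n^2}$, so they are the roots of $Q(T)=T^2-\big(2(-\beta-\lambda_n)-\frac{2k}{\lambda_n}\big)T+\frac{4k^2}{\lambda_n^2}$; on $\mathbb{E}_3$ the sum is positive (since $-\beta>\nu_n>\lambda_n+\frac{k}{\lambda_n}$) and the discriminant of $Q$, which equals $4(\beta+\lambda_n+\mu_n-\nu_n)(\beta+\nu_n)$, is strictly positive, so both roots are positive and distinct. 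To separate $\{\alpha_{n,3}^\pm,\alpha_{n,4}^\pm\}$ from $\{\alpha_{n,1}^\pm,\alpha_{n,2}^\pm\}$ I will evaluate $Q$ at $2(-\beta-\lambda_n)=2\cz\lambda_n(\alpha_{n,1}^\pm)^2$ and at $2(-\beta-\mu_n)=2\cz\lambda_n(\alpha_{n,2}^\pm)^2$, obtaining $\frac{4k}{\lambda_n}\big(-\beta-\lambda_n+\frac{k}{\lambda_n}\big)$ and $-\frac{4k}{\lambda_n}\big(-\beta-\nu_n\big)$, both nonzero since $-\beta>\nu_n$. This exhibits eight distinct nontrivial {\sc u}-amplitudes and finishes the argument.

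The bookkeeping with the inequalities is routine; the one genuinely delicate point — and where I expect to be most careful — is the handling of the doubly-nested radicals defining $\alpha_{n,3}^\pm$ and $\alpha_{n,4}^\pm$: proving they become real exactly when $-\beta\ge\nu_n$, degenerate precisely onto $\alpha_{n,2}^\pm$ on the boundary $-\beta=\nu_n$, and remain separated from $\alpha_{n,1}^\pm,\alpha_{n,2}^\pm$ throughout $\mathbb{E}_3$. Working with the sum and product of their squares, rather than wrestling with the nested square roots directly, is what makes this step tractable.
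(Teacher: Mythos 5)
Your proof is correct and follows essentially the same route as the paper: a case analysis on the position of $-\beta$ relative to $\lambda_n<\mu_n<\nu_n$, using the reality criteria for the {\sc u}-amplitudes and checking the degenerations $\alpha_{n,2}^\pm=0$ at $-\beta=\mu_n$ and $\alpha_{n,3}^\pm,\alpha_{n,4}^\pm\to\alpha_{n,2}^\pm$ at $-\beta=\nu_n$. In fact you go further than the paper, which merely asserts that the eight amplitudes are distinct and nontrivial on $\mathbb{E}_3$; your device of treating $2\cz\lambda_n(\alpha_{n,3}^\pm)^2$ and $2\cz\lambda_n(\alpha_{n,4}^\pm)^2$ as the roots of the quadratic $Q$ and evaluating $Q$ at $2(-\beta-\lambda_n)$ and $2(-\beta-\mu_n)$ supplies a clean verification of that claim.
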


\begin{proof}
We analyze separately all the possible cases.
\begin{itemize}
\item
If $n\in\mathbb{E}_1$,  there are only two distinct
nontrivial {\sc u}-amplitudes, that is, $\alpha_{n,1}^\pm$.
Indeed, when $\mu_n=-\beta$,
$$
\alpha_{n,2}^\pm = 0.
$$

\smallskip
\item If $n\in\mathbb{E}_2$,
there are only four distinct nontrivial {\sc u}-amplitudes, that is,
$\alpha_{n,1}^\pm$ and $\alpha_{n,2}^\pm$. Indeed, when $\nu_n=-\beta$,
$$
\alpha_{n,3}^+ = \alpha_{n,4}^+ = \alpha_{n,2}^+ \and \alpha_{n,3}^- = \alpha_{n,4}^- = \alpha_{n,2}^-.
$$

\smallskip
\item If $n\in\mathbb{E}_3$, all the eight {\sc u}-amplitudes $\alpha_{n,i}^\pm$ are
distinct and nontrivial.
\end{itemize}
If $n\notin\mathbb{E}$, all the {\sc u}-amplitudes $\alpha_{n,i}^\pm$, whenever defined, are trivial.
In particular, the only two allowed amplitudes are $\alpha_{n,1}^+=\alpha_{n,1}^-=0$.
\end{proof}

\newpage

%%%%%%%%%%%%%%%%%%%%%%%%%%%%%%%%%%%%%%%%%%%%%%
\begin{figure}[ht]
\includegraphics[width=10cm]{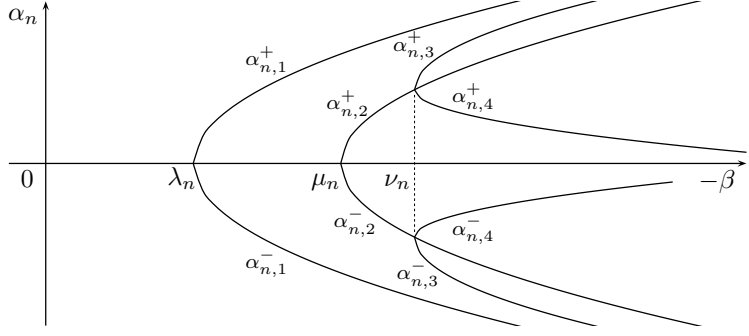}
\caption{The {\sc u}-amplitudes $\alpha_{n,i}^\pm$ for a fixed $n\in\N$.
\label{secondafigura}}
\end{figure}
%%%%%%%%%%%%%%%%%%%%%%%%%%%%%%%%%%%%%%%%%%%%%%

We are now in a position to state our main result on unimodal solutions.

\begin{theorem}
\label{unimodal}
System~\eqref{MAIN} admits nontrivial unimodal solutions if and only
if the set $\mathbb{E}$ is nonempty. More precisely, for every $n\in\N$, one of the following
disjoint situations occurs.

\begin{itemize}
\item If $n\in\mathbb{E}_1$, we have exactly 2 nontrivial unimodal solutions of the form
$$
(u,v) =
\begin{cases}
(\alpha_{n,1}^+\, e_n,\,\alpha_{n,1}^+\, e_n)\\
(\alpha_{n,1}^-\, e_n,\, \alpha_{n,1}^-\, e_n).
\end{cases}
$$
\item If $n\in\mathbb{E}_2$, we have exactly 4 nontrivial unimodal solutions of the form
$$
(u,v) = \begin{cases}
(\alpha_{n,1}^+\, e_n,\,\alpha_{n,1}^+\, e_n)\\
(\alpha_{n,1}^-\, e_n,\, \alpha_{n,1}^-\, e_n)\\
(\alpha_{n,2}^+\, e_n,\,\alpha_{n,2}^-\, e_n)\\
(\alpha_{n,2}^-\, e_n,\, \alpha_{n,2}^+\, e_n).
\end{cases}
$$
\item If $n\in\mathbb{E}_3$, we have exactly 8 nontrivial unimodal solutions of the form
$$
(u,v) =
\begin{cases}
(\alpha_{n,1}^+ \,e_n,\,\alpha_{n,1}^+ \,e_n)\\
(\alpha_{n,1}^- \,e_n,\, \alpha_{n,1}^- \,e_n)\\
(\alpha_{n,2}^+\, e_n,\,\alpha_{n,2}^-\, e_n)\\
(\alpha_{n,2}^-\, e_n,\, \alpha_{n,2}^+\, e_n)\\
(\alpha_{n,3}^+\, e_n,\,\alpha_{n,4}^-\, e_n)\\
(\alpha_{n,3}^-\, e_n,\, \alpha_{n,4}^+\, e_n)\\
(\alpha_{n,4}^+\, e_n,\,\alpha_{n,3}^-\, e_n)\\
(\alpha_{n,4}^-\, e_n,\, \alpha_{n,3}^+\, e_n).
\end{cases}
$$
\item If $n\notin\mathbb{E}$, all the unimodal solutions involving the eigenvector $e_{n}$ are trivial.
\end{itemize}
In summary, system \eqref{MAIN} admits $2|\mathbb{E}_1|+4|\mathbb{E}_2|+8|\mathbb{E}_3|$ nontrivial unimodal solutions.
\end{theorem}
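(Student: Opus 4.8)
The plan is to reduce the statement to a complete enumeration of the real solutions of the $2\times2$ algebraic system \eqref{ognin} for the single active index $n$, and then to read off the reality, non-triviality and distinctness of the amplitudes from Lemma~\ref{reale}. For a unimodal ansatz $u=\alpha_n e_n$, $v=\gamma_n e_n$ with $\alpha_n,\gamma_n\not=0$, the coefficients in \eqref{ognin} are $C_u=\beta+\cz\lambda_n\alpha_n^2$ and $C_v=\beta+\cz\lambda_n\gamma_n^2$. The ``only if'' part of the first assertion is the contrapositive of Proposition~\ref{trivial}; for the ``if'' part one notes that $\mathbb{E}\not=\emptyset$ forces one of $\mathbb{E}_1,\mathbb{E}_2,\mathbb{E}_3$ to be nonempty, hence, by the classification below, a nontrivial unimodal solution to exist.

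First I would split on whether $C_u=C_v$, equivalently whether $\alpha_n^2=\gamma_n^2$, i.e.\ $\alpha_n=\pm\gamma_n$. If $\alpha_n=\gamma_n$ the coupling term in \eqref{ognin} vanishes and the first equation gives $C_u=-\lambda_n$, hence $\alpha_n^2=(-\beta-\lambda_n)/(\cz\lambda_n)$, that is $\alpha_n=\alpha_{n,1}^\pm$, yielding the two in-phase solutions $(\alpha_{n,1}^\pm e_n,\alpha_{n,1}^\pm e_n)$. If $\alpha_n=-\gamma_n$ the first equation reads $\lambda_n^2+C_u\lambda_n+2k=0$, i.e.\ $C_u=-\mu_n$, hence $\alpha_n=\alpha_{n,2}^\pm$, yielding the two out-of-phase solutions $(\alpha_{n,2}^\pm e_n,\alpha_{n,2}^\mp e_n)$.

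The core of the proof is the case $C_u\not=C_v$, where $\alpha_n+\gamma_n\not=0$ and relation \eqref{rel} applies. Setting $p=\lambda_n+C_u$ and $q=\lambda_n+C_v$ (both nonzero here), \eqref{rel} becomes $p\alpha_n+q\gamma_n=0$, i.e.\ $\gamma_n=-(p/q)\alpha_n$; plugging this into one of the equations in \eqref{ognin} gives $\lambda_n pq+k(p+q)=0$, while squaring it and using $\cz\lambda_n\alpha_n^2=p-\beta-\lambda_n$ and $\cz\lambda_n\gamma_n^2=q-\beta-\lambda_n$ gives, after cancelling the factor $p-q\not=0$, the relation $p^2+pq+q^2=(\beta+\lambda_n)(p+q)$. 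With $S=p+q$ and $Q=pq$ these read $\lambda_n Q+kS=0$ and $S^2-Q=(\beta+\lambda_n)S$, which (as $S\not=0$) force $S=\beta+\lambda_n-k/\lambda_n$ and $Q=-kS/\lambda_n$; so $p,q$ are the roots of $t^2-St+Q=0$, whose discriminant simplifies, using $\mu_n-\nu_n=-k/\lambda_n$, to $S^2-4Q=(\beta+\lambda_n+\mu_n-\nu_n)(\beta+\nu_n)$. Substituting the two roots into $\cz\lambda_n\alpha_n^2=p-\beta-\lambda_n$ and $\cz\lambda_n\gamma_n^2=q-\beta-\lambda_n$ identifies $\{\alpha_n^2,\gamma_n^2\}$ with $\{(\alpha_{n,3}^\pm)^2,(\alpha_{n,4}^\pm)^2\}$, and the sign relation $\gamma_n=-(p/q)\alpha_n$ fixes the pairing: when these amplitudes are actually nontrivial (i.e.\ $n\in\mathbb{E}_3$, by Lemma~\ref{reale}) one checks $S<0$, hence $Q>0$, so $p,q$ are both negative, $p/q>0$, and $\gamma_n$ has the sign opposite to $\alpha_n$; this produces exactly the four crossed solutions $(\alpha_{n,3}^\pm e_n,\alpha_{n,4}^\mp e_n)$ and $(\alpha_{n,4}^\pm e_n,\alpha_{n,3}^\mp e_n)$.

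To conclude, I would feed the admissible-amplitude lists of Lemma~\ref{reale} into the three cases: $n\in\mathbb{E}_1$ retains only $\alpha_{n,1}^\pm$ (2 solutions), $n\in\mathbb{E}_2$ retains $\alpha_{n,1}^\pm,\alpha_{n,2}^\pm$ (4 solutions), $n\in\mathbb{E}_3$ retains all eight $\alpha_{n,i}^\pm$, mutually distinct (8 solutions), while no amplitude survives for $n\notin\mathbb{E}$; a direct substitution (all the steps above being reversible) confirms that each listed pair indeed solves \eqref{ognin}, and summing over $n$ gives $2|\mathbb{E}_1|+4|\mathbb{E}_2|+8|\mathbb{E}_3|$. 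The main obstacle is the $C_u\not=C_v$ case: finding the substitution $(p,q)\mapsto(S,Q)$ that collapses the two nonlinear relations into a linear equation for $S$, and — more delicately — running the sign bookkeeping through $\gamma_n=-(p/q)\alpha_n$ to see that the two beams must bow in opposite directions with magnitudes paired as $\alpha_{n,3}\leftrightarrow\alpha_{n,4}$; all the arithmetic about which amplitudes are real, nonzero and distinct is already contained in Lemma~\ref{reale}.
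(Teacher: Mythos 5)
Your argument is correct and reaches exactly the paper's list of solutions, but it gets there by a different computational route. The paper eliminates $\gamma_n$ from \eqref{relation} to obtain a ninth-order polynomial in $\alpha_n$, and then tames it with the substitution $x_n=\omega_n(\eta_n+\cz\alpha_n^2)$ (which is nothing but the ratio $\gamma_n/\alpha_n$), factoring it as $(x_n^2-1)(x_n^2-x_n\omega_n\eta_n+1)=0$; the two factors encode precisely your trichotomy $\gamma_n=\alpha_n$, $\gamma_n=-\alpha_n$, and ``neither''. You instead make that trichotomy explicit from the start via $C_u=C_v\Leftrightarrow\alpha_n^2=\gamma_n^2$, and in the genuinely coupled case you replace the high-degree elimination by the symmetric functions $S=p+q$, $Q=pq$ of $p=\lambda_n+C_u$, $q=\lambda_n+C_v$, where \eqref{rel} gives the linear constraint $p\alpha_n+q\gamma_n=0$; the identities $S=\beta+\lambda_n+\mu_n-\nu_n$ and $S^2-4Q=(\beta+\lambda_n+\mu_n-\nu_n)(\beta+\nu_n)$ you derive are exactly the paper's relations \eqref{pg} in disguise. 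What your route buys is that the quadratic $t^2-St+Q=0$ appears with a transparent meaning ($p$ and $q$ are its roots), and the sign bookkeeping $\gamma_n=-(p/q)\alpha_n$ with $S<0$, $Q>0$ gives a cleaner explanation of why the third bifurcation produces out-of-phase, unequal-amplitude pairs $(\alpha_{n,3}^{\pm},\alpha_{n,4}^{\mp})$ than the paper's direct computation of $\gamma_{n,3}^{\pm}$ and $\gamma_{n,4}^{\pm}$; what the paper's route buys is a single uniform elimination that does not require discussing the degenerate divisions (by $q$, by $p-q$, by $S$) which you correctly, but necessarily, rule out one by one. All the reality, distinctness and counting issues are delegated to Lemma~\ref{reale} in both arguments, so the final enumeration $2|\mathbb{E}_1|+4|\mathbb{E}_2|+8|\mathbb{E}_3|$ is obtained identically.
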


\begin{proof}
Let us look for nontrivial solutions $(u,v)$ of the form \eqref{form}.
Choosing $\phi=\psi=e_n$ in the weak formulation \eqref{weak} and recalling \eqref{Bautc1}, we obtain the system
$$
\begin{cases}
\lambda_n^2 \alpha_n + (\beta + \cz\lambda_n\alpha_n^2)\lambda_n\alpha_n + k(\alpha_n-\gamma_n)=0,\\\noalign{\vskip0.3mm}
\lambda_n^2 \gamma_n + (\beta + \cz\lambda_n\gamma_n^2)\lambda_n\gamma_n - k(\alpha_n-\gamma_n)=0,
\end{cases}
$$
which, setting
$$
\eta_n = 1 +  \frac{\beta}{\lambda_n}+ \frac{k}{\lambda_n^2} \and \omega_n = \frac{\lambda_n^2}{k},
$$
can be rewritten as
\begin{equation}
\label{relation}
\begin{cases}
\gamma_n = \omega_n \alpha_n(\eta_n + \cz\alpha_n^2),\\
\alpha_n= \omega_n \gamma_n(\eta_n + \cz\gamma_n^2).
\end{cases}
\end{equation}
Solving with respect to $\alpha_n$, we arrive at the nine-order equation
$$
\alpha_n(\cz^4\alpha_n^8 \omega_n^4 + 3 \cz^3\alpha_n^6 \omega_n^4 \eta _n+ 3 \cz^2\alpha_n^4 \omega_n^4 \eta_n^2  +
\cz\alpha_n^2\omega_n^4\eta_n^3 + \cz\alpha_n^2\omega_n^2\eta_n +\omega_n^2\eta_n^2 - 1) = 0.
$$
If $\alpha_n=0$ the solution is trivial (since in this case also $\gamma_n$ is zero).
Otherwise, introducing the auxiliary variable
$$
x_n = \omega_n(\eta_n + \cz\alpha_n^2),
$$
we end up with
$$
(x_n^2 - 1 )(x_n^2 - x_n \omega_n\eta_n + 1) = 0.
$$
Making use of the relations
\begin{equation}
\label{pg}
\begin{cases}
\displaystyle
\omega_n \eta_n = -\frac{\lambda_n}{k}\big(-\beta + \mu_n -\nu_n- \lambda_n\big),\\\noalign{\vskip1.5mm}
\displaystyle
\omega_n^2 \eta_n^2 - 4 = \frac{\lambda_n^2}{k^2}\big(\beta + \lambda_n +\mu_n- \nu_n\big)(\beta+ \nu_n),
\end{cases}
\end{equation}
one can easily realize that the solutions are
the {\sc u}-amplitudes $\alpha_{n,i}^\pm$ given by \eqref{ALPHA}.
Hence, according to Lemma~\ref{reale}, we have exactly
\begin{itemize}
\item 2 distinct nontrivial solutions $\{\alpha_{n,1}^\pm\}$ for every $n\in\mathbb{E}_1$;
\item 4 distinct nontrivial solutions $\{\alpha_{n,1}^\pm,\alpha_{n,2}^\pm\}$ for every $n\in\mathbb{E}_2$;
\item 8 distinct nontrivial solutions $\{\alpha_{n,1}^\pm,\alpha_{n,2}^\pm,
\alpha_{n,3}^\pm,\alpha_{n,4}^\pm\}$ for every $n\in\mathbb{E}_3$.
\end{itemize}
By the same token, when $n\notin\mathbb{E}$, we have only the trivial solution.
We are left to find the explicit values $\gamma_{n,i}^\pm$, which can be obtained from \eqref{relation}.
To this end, it is apparent to see that
$$
\begin{cases}
\gamma_{n,1}^\pm= \alpha_{n,1}^\pm,\\\noalign{\vskip1mm}
\gamma_{n,2}^\pm= \alpha_{n,2}^\mp.
\end{cases}
$$
Moreover, invoking \eqref{pg} and observing that the product $\omega_n\eta_n$ is negative when $n\in\mathbb{E}_3$,
\begin{align*}
\gamma_{n,3}^\pm &= \pm \frac{\sqrt{k}\big(\omega_n\eta_n+\sqrt{\omega_n^2\eta_n^2 - 4}\big)}{2}
\sqrt{ \frac{-\omega_n\eta_n+\sqrt{\omega_n^2\eta_n^2 - 4}}{2\cz\lambda_n^2}} \\
&= \mp \sqrt{k} \sqrt{\frac{-\omega_n\eta_n-\sqrt{\omega_n^2\eta_n^2 - 4}}{2\cz\lambda_n^2}}
= \alpha_{n,4}^{\mp},
\end{align*}
and
\begin{align*}
\gamma_{n,4}^\pm= &= \pm \frac{\sqrt{k}\big(\omega_n\eta_n-\sqrt{\omega_n^2\eta_n^2 - 4}\big)}{2}
\sqrt{ \frac{-\omega_n\eta_n-\sqrt{\omega_n^2\eta_n^2 - 4}}{2\cz\lambda_n^2}} \\
&= \mp \sqrt{k} \sqrt{\frac{-\omega_n\eta_n+\sqrt{\omega_n^2\eta_n^2 - 4}}{2\cz\lambda_n^2}}
= \alpha_{n,3}^{\mp}.
\end{align*}
The theorem is proved.
\end{proof}
%%%%%%%%%%%%%%%%%%%%%%%%%%%%%%%%%%%%%%%%%%%%%%%%%

%%%%%%%%%%%%%%%%%%%%%%%%%%%%%%%%%%%%%%%%%%%%%%%%%
\section{Equidistributed Energy Solutions}
\label{EES}

\noindent
In order to investigate the existence of solutions to system~\eqref{MAIN} which are not necessarily unimodal,
we begin to analyze a particular but still very interesting situation.

\begin{definition}
A nontrivial solution $(u,v)$ is called an {\it equidistributed energy solution}
({\it {\sc ee}-solution} for brevity) if
\begin{equation}
\label{equality}
\|u\|_1=\|v\|_1
\quad \Leftrightarrow\quad C_u=C_v.
\end{equation}
\end{definition}

At first glance, this condition might look restrictive. Though, as we will see in the next two lemmas,
{\sc ee}-solutions are in fact quite general. In particular, they pop up whenever a mode of
$u$ is equal or opposite to the corresponding mode of $v$.

\begin{lemma}
\label{FILIPPO}
With reference to \eqref{espressione}, if
$$\alpha_{n_i}\alpha_{n_j}=\pm\gamma_{n_i}\gamma_{n_j}\not=0$$
for some (possibly coinciding) $n_i,n_j$, then $(u,v)$ is
an {\sc ee}-solution. In particular, this is the case when\footnote{In fact, we will implicitly show in our
analysis that the latter condition is necessary as well
in order to have {\sc ee}-solutions.}
$$
|\alpha_{n_i}|=|\gamma_{n_i}|\not=0
$$
for some $n_i$.
\end{lemma}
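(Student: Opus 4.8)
The plan is to argue modewise, exploiting the scalar system \eqref{ognin} satisfied by the Fourier coefficients of any weak solution. Fix an index $n$ in the common support of $u$ and $v$; by Lemma~\ref{gene} this means $\alpha_n\neq0$ and $\gamma_n\neq0$ simultaneously, so both equations of \eqref{ognin} may be put in the ``transfer'' form
\[
k\,\frac{\gamma_n}{\alpha_n}=\lambda_n^2+C_u\lambda_n+k
\qquad\text{and}\qquad
k\,\frac{\alpha_n}{\gamma_n}=\lambda_n^2+C_v\lambda_n+k.
\]
Subtracting these two relations gives at once $k(\gamma_n^2-\alpha_n^2)=(C_u-C_v)\lambda_n\alpha_n\gamma_n$, so that already for a \emph{single} index of the support one has $C_u=C_v\Leftrightarrow|\alpha_n|=|\gamma_n|$. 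This disposes of the ``in particular'' clause of the lemma (and, incidentally, makes transparent the necessity statement announced in the footnote, since for an {\sc ee}-solution the identity forces $\gamma_n^2=\alpha_n^2$ at every mode of the nonempty support).

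For the general hypothesis, write it as $\alpha_{n_i}\alpha_{n_j}=\varepsilon\,\gamma_{n_i}\gamma_{n_j}$ with $\varepsilon\in\{+1,-1\}$, all four coefficients nonzero. Dividing by $\alpha_{n_i}\gamma_{n_j}$, respectively by $\gamma_{n_i}\alpha_{n_j}$, and using $1/\varepsilon=\varepsilon$, this is equivalent to the pair of ratio identities
\[
\frac{\gamma_{n_i}}{\alpha_{n_i}}=\varepsilon\,\frac{\alpha_{n_j}}{\gamma_{n_j}}
\qquad\text{and}\qquad
\frac{\gamma_{n_j}}{\alpha_{n_j}}=\varepsilon\,\frac{\alpha_{n_i}}{\gamma_{n_i}}.
\]
Plugging in the transfer relations — the $C_u$-one for each ratio of the shape $\gamma/\alpha$, the $C_v$-one for each ratio of the shape $\alpha/\gamma$ — turns this into the two scalar identities
\[
\lambda_{n_i}^2+C_u\lambda_{n_i}+k=\varepsilon\big(\lambda_{n_j}^2+C_v\lambda_{n_j}+k\big),
\qquad
\lambda_{n_j}^2+C_u\lambda_{n_j}+k=\varepsilon\big(\lambda_{n_i}^2+C_v\lambda_{n_i}+k\big).
\]

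It then remains to combine these two identities. Adding them yields
\[
(1-\varepsilon)\big(\lambda_{n_i}^2+\lambda_{n_j}^2+2k\big)+(\lambda_{n_i}+\lambda_{n_j})(C_u-\varepsilon C_v)=0,
\]
while subtracting them yields
\[
(\lambda_{n_i}-\lambda_{n_j})\big[(1+\varepsilon)(\lambda_{n_i}+\lambda_{n_j})+C_u+\varepsilon C_v\big]=0.
\]
If $\varepsilon=+1$ the first relation collapses to $(\lambda_{n_i}+\lambda_{n_j})(C_u-C_v)=0$, whence $C_u=C_v$ since $\lambda_{n_i}+\lambda_{n_j}>0$. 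If $\varepsilon=-1$ the second relation collapses to $(\lambda_{n_i}-\lambda_{n_j})(C_u-C_v)=0$; here either $\lambda_{n_i}\neq\lambda_{n_j}$, in which case $C_u=C_v$, or $n_i=n_j$ (the eigenvalues being simple), but then the hypothesis would read $\alpha_{n_i}^2=-\gamma_{n_i}^2\neq0$, impossible over $\R$, so this branch does not occur. In every case $C_u=C_v$, i.e.\ $(u,v)$ is an {\sc ee}-solution. The only genuinely delicate points are this sign bookkeeping and the vacuous coinciding-indices branch; everything else is a direct substitution into \eqref{ognin}, so I do not anticipate any real obstacle.
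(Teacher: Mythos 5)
Your proof is correct and takes essentially the same route as the paper's: both arguments test the weak formulation against $e_{n_i}$ and $e_{n_j}$ and manipulate the resulting modewise system \eqref{ognin} under the product hypothesis. The only difference is organizational — you work with the ratios $\gamma_n/\alpha_n$, $\alpha_n/\gamma_n$ and reach $C_u=C_v$ directly, whereas the paper first eliminates $C_u,C_v$ to obtain $|\alpha_{n_i}|=|\gamma_{n_i}|$ and then concludes; your single-mode identity $k(\gamma_n^2-\alpha_n^2)=(C_u-C_v)\lambda_n\alpha_n\gamma_n$ also cleanly justifies the necessity claim in the footnote.
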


\begin{proof}
Let $n_i,n_j$ be such that
$$\alpha_{n_i}\alpha_{n_j}=\pm\gamma_{n_i}\gamma_{n_j}\not=0.$$
Choosing $\phi=\psi=e_{n_i}$ in the weak formulation \eqref{weak}, we obtain
\begin{equation}
\label{PRIMO}
\begin{cases}
\lambda_{n_i}^2 \alpha_{n_i} + C_u\lambda_{n_i}\alpha_{n_i}+k(\alpha_{n_i}-\gamma_{n_i})=0,\\
\lambda_{n_i}^2 \gamma_{n_i} + C_v\lambda_{n_i}\gamma_{n_i}-k(\alpha_{n_i}-\gamma_{n_i})=0,
\end{cases}
\end{equation}
while, choosing $\phi=\psi=e_{n_j}$, we get
\begin{equation}
\label{SECONDO}
\begin{cases}
\lambda_{n_j}^2 \alpha_{n_j} + C_u\lambda_{n_j}\alpha_{n_j}+k(\alpha_{n_j}-\gamma_{n_j})=0,\\
\lambda_{n_j}^2 \gamma_{n_j} + C_v\lambda_{n_j}\gamma_{n_j}-k(\alpha_{n_j}-\gamma_{n_j})=0.
\end{cases}
\end{equation}
Then, from \eqref{PRIMO},
$$
\begin{cases}
\displaystyle
C_u = -\lambda_{n_i} - \frac{k(\alpha_{n_i}-\gamma_{n_i})}{\lambda_{n_i}\alpha_{n_i}},\\\noalign{\vskip1.1mm}
\displaystyle
C_v = -\lambda_{n_i}+\frac{k(\alpha_{n_i}-\gamma_{n_i})}{\lambda_{n_i}\gamma_{n_i}}.
\end{cases}
$$
These expressions, substituted into \eqref{SECONDO}, yield
$$
\begin{cases}
\lambda_{n_j}^2\lambda_{n_i} \alpha_{n_i}\alpha_{n_j}-\lambda_{n_i}^2\lambda_{n_j} \alpha_{n_i}\alpha_{n_j}
- k\lambda_{n_j}\alpha_{n_j}(\alpha_{n_i}-\gamma_{n_i})+k\lambda_{n_i}\alpha_{n_i}(\alpha_{n_j}
-\gamma_{n_j})=0,\\\noalign{\vskip0.3mm}
\lambda_{n_j}^2\lambda_{n_i} \gamma_{n_i}\gamma_{n_j}-\lambda_{n_i}^2\lambda_{n_j} \gamma_{n_i}\gamma_{n_j}
+ k\lambda_{n_j}\gamma_{n_j}(\alpha_{n_i}-\gamma_{n_i})-k\lambda_{n_i}\gamma_{n_i}(\alpha_{n_j}-\gamma_{n_j})=0.
\end{cases}
$$
If
$$\alpha_{n_i}\alpha_{n_j}=\gamma_{n_i}\gamma_{n_j}\not=0,$$
subtracting the two equations of the system above we readily find
$$
|\alpha_{n_i}|=|\gamma_{n_i}|.
$$
On the other hand, if
$$\alpha_{n_i}\alpha_{n_j}=-\gamma_{n_i}\gamma_{n_j}\not=0,$$
(implying $n_i\not=n_j$), adding the two equations of the system we still conclude that
$$
|\alpha_{n_i}|=|\gamma_{n_i}|.
$$
At this point, an exploitation of \eqref{PRIMO} gives $C_u=C_v$.
\end{proof}

\begin{lemma}
\label{VITTO}
With reference to \eqref{espressione}, if
$$\alpha_{n_i}\gamma_{n_j}=\alpha_{n_j}\gamma_{n_i}\not=0$$
for some $n_i\not=n_j$, then $(u,v)$ is an {\sc ee}-solution.
\end{lemma}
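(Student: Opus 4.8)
The plan is to exploit the algebraic constraint $\alpha_{n_i}\gamma_{n_j}=\alpha_{n_j}\gamma_{n_i}\not=0$ to force the $n_i$-th and $n_j$-th modes of $u$ to be proportional to the corresponding modes of $v$ through one and the same scalar, and then to rule out every value of that scalar except $-1$; in the remaining case one has $|\alpha_{n_i}|=|\gamma_{n_i}|$ and Lemma \ref{FILIPPO} closes the argument.

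Concretely, I would first observe that since the common value of $\alpha_{n_i}\gamma_{n_j}$ and $\alpha_{n_j}\gamma_{n_i}$ is nonzero, all four numbers $\alpha_{n_i},\alpha_{n_j},\gamma_{n_i},\gamma_{n_j}$ are nonzero, so the hypothesis is equivalent to
$$
t:=\frac{\alpha_{n_i}}{\gamma_{n_i}}=\frac{\alpha_{n_j}}{\gamma_{n_j}}\neq 0,
$$
namely $\alpha_{n_i}=t\gamma_{n_i}$ and $\alpha_{n_j}=t\gamma_{n_j}$. If $t=-1$, then $|\alpha_{n_i}|=|\gamma_{n_i}|\neq 0$ and the ``in particular'' clause of Lemma \ref{FILIPPO} immediately yields that $(u,v)$ is an {\sc ee}-solution. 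If instead $t\neq -1$, then $\alpha_{n_i}+\gamma_{n_i}=(t+1)\gamma_{n_i}\neq 0$ and $\alpha_{n_j}+\gamma_{n_j}=(t+1)\gamma_{n_j}\neq 0$, so the identity \eqref{rel} of the Remark may be invoked at both indices, giving
$$
\lambda_{n_i}=-\frac{C_u\alpha_{n_i}+C_v\gamma_{n_i}}{\alpha_{n_i}+\gamma_{n_i}}=-\frac{C_u t+C_v}{t+1}=-\frac{C_u\alpha_{n_j}+C_v\gamma_{n_j}}{\alpha_{n_j}+\gamma_{n_j}}=\lambda_{n_j}.
$$
Since the eigenvalues of $A$ are simple and $n_i\neq n_j$, this is a contradiction; hence $t=-1$ is the only possibility, and the conclusion follows from Lemma \ref{FILIPPO} as above.

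I do not anticipate a genuine obstacle. The only point that needs care is the case distinction on $t$, which is precisely what guarantees that the denominators appearing in \eqref{rel} do not vanish, together with the observation that the excluded value $t=-1$ is exactly the configuration already subsumed by Lemma \ref{FILIPPO}, so no separate computation is required to settle it.
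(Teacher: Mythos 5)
Your argument is correct and is essentially identical to the paper's own proof: the paper likewise writes $\alpha_{n_i}=\varpi\gamma_{n_i}$, $\alpha_{n_j}=\varpi\gamma_{n_j}$ for a common scalar $\varpi\neq0$, reduces via Lemma \ref{FILIPPO} to showing $\varpi=-1$, and excludes $\varpi\neq-1$ by the same computation with \eqref{rel} forcing $\lambda_{n_i}=\lambda_{n_j}$. Your added care about the nonvanishing of the denominators is a welcome (if minor) elaboration of a step the paper leaves implicit.
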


\begin{proof}
By assumption, there exists $\varpi\not=0$ such that
$$
\alpha_{n_i}=\varpi \gamma_{n_i} \and \alpha_{n_j}=\varpi \gamma_{n_j}.
$$
Due to Lemma \ref{FILIPPO}, to reach the conclusion it is sufficient to show that $\varpi=-1$.
If not, exploiting \eqref{rel},
$$
\lambda_{n_i} = - \frac{C_u\alpha_{n_i}+C_v\gamma_{n_i}}{\alpha_{n_i}+\gamma_{n_i}}
= - \frac{C_u\varpi+C_v}{\varpi+1}=
- \frac{C_u\alpha_{n_j}+C_v\gamma_{n_j}}{\alpha_{n_j}+\gamma_{n_j}} = \lambda_{n_j},
$$
yielding a contradiction.
\end{proof}

We now proceed with a detailed description of the class of {\sc ee}-solutions.

\subsection{The unimodal case}
The unimodal solutions
have been already classified in the previous section. In particular, from Theorem \ref{unimodal}
we learn that all unimodal solutions, except the ones involving the {\sc u}-amplitudes $\alpha_{n,3}^\pm$
and $\alpha_{n,4}^\pm$ arising from the further bifurcation at $\nu_n=-\beta$, are in fact {\sc ee}-solutions. That is,
system \eqref{MAIN} admits
$$
2|\mathbb{E}_1| + 4|\mathbb{E}_2| + 4|\mathbb{E}_3|
$$
unimodal {\sc ee}-solutions, explicitly computed.

\subsection{The bimodal case}
In order to classify the bimodal {\sc ee}-solutions, we introduce the (disjoint and possibly empty)
subsets of $\mathbb{E}\times\mathbb{E}$
$$
\mathbb{B}_1 = \{(n_1,n_2): n_1<n_2,\, \lambda_{n_1} + \lambda_{n_2}<-\beta \,\text{ and }\, \lambda_{n_1}\lambda_{n_2}=2k \}
$$
and
$$
\mathbb{B}_2 = \{(n_1,n_2): n_1<n_2,\, \lambda_{n_2}<-\beta \,\text{ and }\,\lambda_{n_1}(\lambda_{n_2}-\lambda_{n_1})=2k\}.
$$
Then, setting
$$
\mathbb{B} = \mathbb{B}_1 \cup \mathbb{B}_2,
$$
we have the following result.

\begin{theorem}
\label{teobim}
System \eqref{MAIN} admits bimodal {\sc ee}-solutions if and only if the set $\mathbb{B}$ is nonempty. More
precisely, for every couple $(n_1,n_2)\in\N\times\N$ with $n_1<n_2$, one of the following disjoint situations occurs.
\begin{itemize}
\item If $(n_1,n_2)\in\mathbb{B}_1$, we have exactly the (infinitely many) solutions of the form
$$
\begin{cases}
u = x e_{n_1} + y e_{n_2},\\
v = -x e_{n_1} -y e_{n_2},
\end{cases}
$$
for all $(x,y)\in\R^2$ satisfying the equality
$$
\cz x^2 \lambda_{n_1} + \cz y^2\lambda_{n_2} + \lambda_{n_1}+\lambda_{n_2}+\beta =0 \qquad \text{with}\qquad xy\neq0.
$$
\item If $(n_1,n_2)\in\mathbb{B}_2$, we have exactly the (infinitely many) solutions of the form
$$
\begin{cases}
u = x e_{n_1} + y e_{n_2}, \\
v = -x e_{n_1} + y e_{n_2},
\end{cases}
$$
for all $(x,y)\in\R^2$ satisfying the equality
$$
\cz x^2 \lambda_{n_1} + \cz y^2\lambda_{n_2} + \lambda_{n_2}+\beta =0 \qquad \text{with}\qquad xy\neq0.
$$
\item If $(n_1,n_2)\notin\mathbb{B}$, there are no bimodal {\sc ee}-solutions involving the eigenvectors $e_{n_1}$
and $e_{n_2}$.
\end{itemize}
\end{theorem}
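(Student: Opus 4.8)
The plan is to reduce everything to the scalar mode equations \eqref{ognin}, which for an {\sc ee}-solution collapse since $C_u=C_v=:C$. Let $(u,v)$ be a bimodal {\sc ee}-solution, written as $u=\alpha_{n_1}e_{n_1}+\alpha_{n_2}e_{n_2}$ and $v=\gamma_{n_1}e_{n_1}+\gamma_{n_2}e_{n_2}$ with $n_1<n_2$ and all four coefficients nonzero (by Lemma~\ref{gene}). Adding and subtracting the two lines of \eqref{ognin} for the index $n_i$ gives
$$
\lambda_{n_i}(\lambda_{n_i}+C)(\alpha_{n_i}+\gamma_{n_i})=0
\and
(\lambda_{n_i}^2+C\lambda_{n_i}+2k)(\alpha_{n_i}-\gamma_{n_i})=0 .
$$
Since $\alpha_{n_i}\neq0$, at least one of $\alpha_{n_i}\pm\gamma_{n_i}$ is nonzero, and both cannot be (that would force $\lambda_{n_i}=-C$ together with $\lambda_{n_i}^2+C\lambda_{n_i}+2k=0$, i.e.\ $k=0$). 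Hence for each $i$ exactly one alternative holds: either $\gamma_{n_i}=\alpha_{n_i}$ with $\lambda_{n_i}=-C$, or $\gamma_{n_i}=-\alpha_{n_i}$ with $\lambda_{n_i}^2+C\lambda_{n_i}+2k=0$.

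As $\lambda_{n_1}\neq\lambda_{n_2}$, the first alternative holds for at most one index, and I would split accordingly. If $\lambda_{n_1}^2+C\lambda_{n_1}+2k=\lambda_{n_2}^2+C\lambda_{n_2}+2k=0$, then $\lambda_{n_1},\lambda_{n_2}$ are the two distinct roots of $\lambda^2+C\lambda+2k$, so Vieta gives $\lambda_{n_1}+\lambda_{n_2}=-C$ and $\lambda_{n_1}\lambda_{n_2}=2k$; moreover $\gamma_{n_i}=-\alpha_{n_i}$, so with $x=\alpha_{n_1}$, $y=\alpha_{n_2}$ we are in the $\mathbb{B}_1$ form $v=-u$. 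If instead exactly one index falls in the first alternative, I claim it is $n_2$: the roots of $\lambda^2+C\lambda+2k$ have product $2k>0$, hence are real and positive only when $C<0$, and then both are strictly smaller than $-C$ (since $\sqrt{C^2-8k}<|C|$); so the index whose eigenvalue equals $-C$ is the larger one. Consequently $\lambda_{n_2}=-C$, $\lambda_{n_1}$ solves $\lambda_{n_1}^2+C\lambda_{n_1}+2k=0$, equivalently $\lambda_{n_1}(\lambda_{n_2}-\lambda_{n_1})=2k$, and $\gamma_{n_1}=-\alpha_{n_1}$, $\gamma_{n_2}=\alpha_{n_2}$, which with $x=\alpha_{n_1}$, $y=\alpha_{n_2}$ is the $\mathbb{B}_2$ form.

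In each case, the defining identity $C=C_u=\beta+\cz(\lambda_{n_1}x^2+\lambda_{n_2}y^2)$ combined with $C=-(\lambda_{n_1}+\lambda_{n_2})$ (resp.\ $C=-\lambda_{n_2}$) is precisely the stated quadratic constraint on $(x,y)$; since its constant-free part is a positive-definite form in $(x,y)$ and genuine bimodality forces $xy\neq0$, such $(x,y)$ exist if and only if $\lambda_{n_1}+\lambda_{n_2}<-\beta$ (resp.\ $\lambda_{n_2}<-\beta$), i.e.\ $(n_1,n_2)\in\mathbb{B}_1$ (resp.\ $\mathbb{B}_2$). For the converse, given $(n_1,n_2)\in\mathbb{B}_1$ (resp.\ $\mathbb{B}_2$) and $(x,y)$ on that conic with $xy\neq0$, one sets $C=-(\lambda_{n_1}+\lambda_{n_2})$ (resp.\ $-\lambda_{n_2}$) and checks directly, using $\lambda_{n_1}\lambda_{n_2}=2k$ (resp.\ $\lambda_{n_1}(\lambda_{n_2}-\lambda_{n_1})=2k$), that the proposed $(u,v)$ obeys $\|u\|_1=\|v\|_1$ and every equation in \eqref{ognin}, hence is a genuine bimodal {\sc ee}-solution; and if $(n_1,n_2)\notin\mathbb{B}$ the forward analysis already excludes such a solution. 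Disjointness of the three alternatives is clear, since $\lambda_{n_1}\lambda_{n_2}=\lambda_{n_1}(\lambda_{n_2}-\lambda_{n_1})$ would force $\lambda_{n_1}=0$.

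The main obstacle is the bookkeeping of the mixed case: correctly deciding which eigenvalue equals $-C$ and which root of the quadratic, and thereby which of $u$ and $v$ carries the sign flip in which mode, consistently with the ordering $n_1<n_2$. This rests on the elementary but crucial remark that the roots of $\lambda^2+C\lambda+2k$ lie below $-C$. Everything else — Vieta's relations, writing out the conic, and the converse substitution into \eqref{ognin} — is routine algebra.
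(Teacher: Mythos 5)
Your argument is correct and is essentially the paper's own proof in lightly repackaged form: adding and subtracting the two mode equations \eqref{ognin} under $C_u=C_v$ yields exactly the dichotomy (either $\gamma_{n_i}=-\alpha_{n_i}$ with $\lambda_{n_i}^2+C\lambda_{n_i}+2k=0$, or $\gamma_{n_i}=\alpha_{n_i}$ with $\lambda_{n_i}=-C$) that the paper extracts from its four expressions \eqref{EE1}--\eqref{EE4} for $C_u$, and your remark that a positive root of $\lambda^2+C\lambda+2k$ must lie strictly below $-C$ is precisely the paper's contradiction $0<2k=\lambda_{n_2}(\lambda_{n_1}-\lambda_{n_2})<0$ ruling out the wrong assignment in the mixed case. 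The case split, Vieta relations, conic constraint on $(x,y)$, and converse verification all match the paper's proof, so nothing further is needed.
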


\begin{proof}
Let us look for bimodal {\sc ee}-solutions $(u,v)$ of the form
$$
\begin{cases}
u=\alpha_{n_1} e_{n_1} + \alpha_{n_2} e_{n_2},\\
v=\gamma_{n_1} e_{n_1} + \gamma_{n_2} e_{n_2},
\end{cases}
$$
with $n_1<n_2\in\N$ and $\alpha_{n_i},\gamma_{n_i}\in\R\setminus\{0\}$.
Choosing $\phi=\psi=e_{n_1}$ in the weak formulation~\eqref{weak}, we obtain
$$
\begin{cases}
\lambda_{n_1}^2 \alpha_{n_1} + C_u\lambda_{n_1}\alpha_{n_1}+k(\alpha_{n_1}-\gamma_{n_1})=0,\\
\lambda_{n_1}^2 \gamma_{n_1} + C_v\lambda_{n_1}\gamma_{n_1}-k(\alpha_{n_1}-\gamma_{n_1})=0,
\end{cases}
$$
while, choosing $\phi=\psi=e_{n_2}$, we get
$$
\begin{cases}
\lambda_{n_2}^2 \alpha_{n_2} + C_u\lambda_{n_2}\alpha_{n_2}+k(\alpha_{n_2}-\gamma_{n_2})=0,\\
\lambda_{n_2}^2 \gamma_{n_2} + C_v\lambda_{n_2}\gamma_{n_2}-k(\alpha_{n_2}-\gamma_{n_2})=0.
\end{cases}
$$
Since we require $C_u=C_v$, we infer that
\begin{align}
\label{EE1}
&C_u = -\lambda_{n_1}-\frac{k(\alpha_{n_1}-\gamma_{n_1})}{\lambda_{n_1}\alpha_{n_1}},\\
\label{EE2}
&C_u = -\lambda_{n_1}+\frac{k(\alpha_{n_1}-\gamma_{n_1})}{\lambda_{n_1}\gamma_{n_1}},\\
\label{EE3}
&C_u = -\lambda_{n_2}-\frac{k(\alpha_{n_2}-\gamma_{n_2})}{\lambda_{n_2}\alpha_{n_2}},\\
\label{EE4}
&C_u = -\lambda_{n_2}+\frac{k(\alpha_{n_2}-\gamma_{n_2})}{\lambda_{n_2}\gamma_{n_2}}.
\end{align}
At this point, we shall distinguish three cases.

\smallskip
\noindent
$\diamond$ When
$$
\begin{cases}
\gamma_{n_1} +\alpha_{n_1}=0,\\
\gamma_{n_2} +\alpha_{n_2}=0,
\end{cases}
$$
equations \eqref{EE1}-\eqref{EE4} reduce to
$$
\begin{cases}
\lambda_{n_1}C_u = -\lambda_{n_1}^2 - 2k,\\
\lambda_{n_2}C_u = -\lambda_{n_2}^2 - 2k,
\end{cases}
$$
implying
$$
\lambda_{n_1}\lambda_{n_2}=2k.
$$
Moreover, the value $C_u$ is determined by \eqref{Bautc1}, which provides the equality
$$
\cz \alpha_{n_1}^2\lambda_{n_1} + \cz \alpha_{n_2}^2\lambda_{n_2} + \lambda_{n_1}+\lambda_{n_2}+\beta=0.
$$
Hence, there exist bimodal {\sc ee}-solutions (explicitly computed) if and only if the pair $(n_1,n_2)\in\mathbb{B}_1$.

\smallskip
\noindent
$\diamond$ When
$$
\begin{cases}
\gamma_{n_1} +\alpha_{n_1}=0,\\
\gamma_{n_2} +\alpha_{n_2}\neq 0,
\end{cases}
$$
we take the difference of \eqref{EE4} and \eqref{EE3}, establishing the identity
$$
\gamma_{n_2}=\alpha_{n_2}.
$$
Thus, equations \eqref{EE1}-\eqref{EE4} reduce to
$$
\begin{cases}
\lambda_{n_1}C_u = -\lambda_{n_1}^2 - 2k,\\\noalign{\vskip1.3mm}
C_u = -\lambda_{n_2},
\end{cases}
$$
implying
$$
\lambda_{n_1}(\lambda_{n_2}-\lambda_{n_1})=2k.
$$
Again, the value $C_u$ is determined by \eqref{Bautc1}, which gives
$$
\cz \alpha_{n_1}^2\lambda_{n_1} + \cz \alpha_{n_2}^2\lambda_{n_2} + \lambda_{n_2}+\beta=0.
$$
Hence, there exist bimodal {\sc ee}-solutions (explicitly computed) if and only if the pair
$(n_1,n_2)\in\mathbb{B}_2$.

\smallskip
\noindent
$\diamond$ We show that the remaining case
$$\gamma_{n_1} +\alpha_{n_1}\neq 0$$
is impossible. Indeed, taking the difference of
\eqref{EE2} and \eqref{EE1}, we find
$$
\gamma_{n_1} =\alpha_{n_1}.
$$
If $\gamma_{n_2} +\alpha_{n_2}= 0$, from \eqref{EE1} and \eqref{EE3} we conclude that
$$
0<2k = \lambda_{n_2}(\lambda_{n_1}-\lambda_{n_2})<0,
$$
yielding a contradiction. On the other hand, if $\gamma_{n_2} +\alpha_{n_2}\neq 0$,
we learn once more that
$$\gamma_{n_2}=\alpha_{n_2}.$$
But in this situation, equations \eqref{EE1} and \eqref{EE3} lead to
$\lambda_{n_1}  = \lambda_{n_2},$
and the sought contradiction follows.
\end{proof}

\subsection{The trimodal case}
Finally, we classify the trimodal {\sc ee}-solutions. To
this end, we consider the (possibly empty)
subset of $\mathbb{E}\times\mathbb{E}\times\mathbb{E}$
$$
\mathbb{T} = \{(n_1,n_2,n_3): n_1<n_2<n_3,\, \lambda_{n_3}<-\beta \,\text{ and }\, \lambda_{n_1}(\lambda_{n_3}-\lambda_{n_1})
=\lambda_{n_2}(\lambda_{n_3}-\lambda_{n_2})=2k \}.
$$
The result reads as follows.

\begin{theorem}
\label{teotrimodal}
System \eqref{MAIN} admits trimodal {\sc ee}-solutions if and only if the set $\mathbb{T}$ is nonempty. More
precisely, for every triplet $(n_1,n_2,n_3)\in\N\times\N\times\N$ with $n_1<n_2<n_3$, one
of the following disjoint situations occurs.

\begin{itemize}
\item If $(n_1,n_2,n_3)\in\mathbb{T}$, we have exactly the (infinitely many) solutions of the form
$$
\begin{cases}
u = x e_{n_1} + y e_{n_2} + z e_{n_3},\\
v = -x e_{n_1} -y e_{n_2} + z e_{n_3},
\end{cases}
$$
for all $(x,y,z)\in\R^3$ satisfying the equality
$$
\cz x^2 \lambda_{n_1} + \cz y^2\lambda_{n_2} + \cz z^2\lambda_{n_3} + \lambda_{n_3}+\beta =0 \qquad \text{with}\qquad xyz\neq0.
$$
\item If $(n_1,n_2,n_3)\notin\mathbb{T}$, there are no trimodal {\sc ee}-solutions involving the eigenvectors
$e_{n_1}, e_{n_2}, e_{n_3}$.
\end{itemize}
\end{theorem}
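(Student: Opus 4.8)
The plan is to run the same quadratic-factorization scheme already used for the unimodal and bimodal cases, now with three modes. Write a putative trimodal {\sc ee}-solution as
$$
u = \alpha_{n_1}e_{n_1}+\alpha_{n_2}e_{n_2}+\alpha_{n_3}e_{n_3},\qquad
v = \gamma_{n_1}e_{n_1}+\gamma_{n_2}e_{n_2}+\gamma_{n_3}e_{n_3},
$$
with $n_1<n_2<n_3$, all coefficients nonzero, and common value $C=C_u=C_v$ forced by \eqref{equality}. Testing \eqref{weak} against $\phi=\psi=e_{n_i}$ for $i=1,2,3$ produces, for each $i$, the pair
$$
\lambda_{n_i}^2\alpha_{n_i}+C\lambda_{n_i}\alpha_{n_i}+k(\alpha_{n_i}-\gamma_{n_i})=0,\qquad
\lambda_{n_i}^2\gamma_{n_i}+C\lambda_{n_i}\gamma_{n_i}-k(\alpha_{n_i}-\gamma_{n_i})=0.
$$
Adding and subtracting these gives $(\lambda_{n_i}^2+C\lambda_{n_i})(\alpha_{n_i}+\gamma_{n_i})=0$ and $(\lambda_{n_i}^2+C\lambda_{n_i}+2k)(\alpha_{n_i}-\gamma_{n_i})=0$. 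Since $k>0$ the two quadratic factors cannot both vanish, and since $\alpha_{n_i},\gamma_{n_i}\neq0$, for each $i$ exactly one alternative holds: either $\alpha_{n_i}=\gamma_{n_i}$ with $C=-\lambda_{n_i}$ (a ``symmetric mode''), or $\alpha_{n_i}=-\gamma_{n_i}$ with $\lambda_{n_i}^2+C\lambda_{n_i}+2k=0$ (an ``antisymmetric mode''); this is just the statement that $\lambda_{n_i}$ is a root of the factorization $P(\lambda)=(\lambda+C)(\lambda^2+C\lambda+2k)$ noted after \eqref{espressione}.

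Next I would exploit consistency of $C$ across the three modes. Two distinct symmetric modes would force $\lambda_{n_i}=-C=\lambda_{n_j}$, impossible for $n_i\neq n_j$; and three antisymmetric modes would make $\lambda_{n_1},\lambda_{n_2},\lambda_{n_3}$ three distinct roots of the quadratic $\lambda^2+C\lambda+2k$, also impossible. Hence exactly one mode, say $n_s$, is symmetric and two, say $n_a<n_b$, are antisymmetric, with $C=-\lambda_{n_s}$ and $\lambda_{n_a},\lambda_{n_b}$ the two roots of $\lambda^2+C\lambda+2k$. Since the product of those roots is $2k$ and their sum is $-C=\lambda_{n_s}$, we get $\lambda_{n_a}\lambda_{n_b}=2k$ and $\lambda_{n_s}=\lambda_{n_a}+\lambda_{n_b}>\lambda_{n_a},\lambda_{n_b}$, so $n_s=n_3$ and $\{n_a,n_b\}=\{n_1,n_2\}$. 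Rewriting $\lambda_{n_1}\lambda_{n_2}=2k$ and $\lambda_{n_3}=\lambda_{n_1}+\lambda_{n_2}$ as $\lambda_{n_1}(\lambda_{n_3}-\lambda_{n_1})=\lambda_{n_2}(\lambda_{n_3}-\lambda_{n_2})=2k$ gives the defining relations of $\mathbb{T}$; and combining $C=-\lambda_{n_3}$ with $C=\beta+\cz\|u\|_1^2=\beta+\cz(\lambda_{n_1}\alpha_{n_1}^2+\lambda_{n_2}\alpha_{n_2}^2+\lambda_{n_3}\alpha_{n_3}^2)$ (cf.\ \eqref{Bautc1}) yields, on one hand, $\lambda_{n_3}<-\beta$ (the bracket being strictly positive), and on the other hand --- writing $x=\alpha_{n_1}$, $y=\alpha_{n_2}$, $z=\alpha_{n_3}$ --- precisely the constraint $\cz x^2\lambda_{n_1}+\cz y^2\lambda_{n_2}+\cz z^2\lambda_{n_3}+\lambda_{n_3}+\beta=0$. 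The sign pattern $\gamma_{n_1}=-x$, $\gamma_{n_2}=-y$, $\gamma_{n_3}=z$ then gives the asserted form of $(u,v)$, and $xyz\neq0$ since all coefficients are nonzero.

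For the converse (sufficiency and the last bullet) I would directly verify that for $(n_1,n_2,n_3)\in\mathbb{T}$ and any $(x,y,z)$ with $xyz\neq0$ on the indicated ellipsoid --- nonempty and infinite since $\lambda_{n_3}<-\beta$ --- the couple $(u,v)$ defined above solves \eqref{weak}: one has $\|u\|_1=\|v\|_1$ by inspection, hence $C_u=C_v=-\lambda_{n_3}$ by the constraint, and each modal pair collapses to $\lambda_{n_i}(\lambda_{n_i}+C)=0$ for $i=3$ and $\lambda_{n_i}^2+C\lambda_{n_i}+2k=0$ for $i=1,2$, both true by the definition of $\mathbb{T}$; the solution is trimodal and nontrivial because $xyz\neq0$. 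I expect the only mildly delicate point to be the combinatorial bookkeeping that forces exactly one symmetric mode, pins it to the largest index $n_3$, and thereby fixes the precise $\pm$ pattern in $v$; everything else is the same elementary mechanism already used in the unimodal and bimodal cases.
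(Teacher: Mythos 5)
Your proof is correct and follows essentially the same route as the paper's: test \eqref{weak} against each $e_{n_i}$, use $C_u=C_v$ to force each mode to be either symmetric with $C=-\lambda_{n_i}$ or antisymmetric with $\lambda_{n_i}$ a root of $\lambda^2+C\lambda+2k$, and then read off the relations defining $\mathbb{T}$ together with the ellipsoid constraint from \eqref{Bautc1}. Your Vieta argument (the symmetric eigenvalue equals the sum of the two antisymmetric ones, hence is the largest, pinning it to $n_3$) is a tidy way of handling the case exclusions that the paper only sketches and leaves to the reader.
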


\begin{proof}
The argument goes along the same lines of Theorem \ref{teobim}. For this reason, we limit ourselves
to give a short (albeit complete) proof, leaving the verification of some calculations to the reader.

As customary, let us look for trimodal {\sc ee}-solutions $(u,v)$ of the form
$$
\begin{cases}
u=\alpha_{n_1} e_{n_1} + \alpha_{n_2} e_{n_2} + \alpha_{n_3} e_{n_3},\\
v=\gamma_{n_1} e_{n_1} + \gamma_{n_2} e_{n_2} + \gamma_{n_3} e_{n_3},
\end{cases}
$$
with $n_1<n_2<n_3\in\N$ and $\alpha_{n_i},\gamma_{n_i}\in\R\setminus\{0\}$.
Accordingly, from the weak formulation~\eqref{weak}, choosing first $\phi=\psi=e_{n_1}$, then
$\phi=\psi=e_{n_2}$, and finally $\phi=\psi=e_{n_3}$, we obtain the six equations
\begin{equation}
\label{sistemone}
\begin{cases}
\displaystyle C_u = -\lambda_{n_1}-\frac{k(\alpha_{n_1}-\gamma_{n_1})}{\lambda_{n_1}\alpha_{n_1}},\\\noalign{\vskip1.3mm}
\displaystyle C_u = -\lambda_{n_1}+\frac{k(\alpha_{n_1}-\gamma_{n_1})}{\lambda_{n_1}\gamma_{n_1}},\\\noalign{\vskip1.3mm}
\displaystyle C_u = -\lambda_{n_2}-\frac{k(\alpha_{n_2}-\gamma_{n_2})}{\lambda_{n_2}\alpha_{n_2}},\\\noalign{\vskip1.3mm}
\displaystyle C_u = -\lambda_{n_2}+\frac{k(\alpha_{n_2}-\gamma_{n_2})}{\lambda_{n_2}\gamma_{n_2}},\\\noalign{\vskip1.3mm}
\displaystyle C_u = -\lambda_{n_3}-\frac{k(\alpha_{n_3}-\gamma_{n_3})}{\lambda_{n_3}\alpha_{n_3}},\\\noalign{\vskip1.3mm}
\displaystyle C_u = -\lambda_{n_3}+\frac{k(\alpha_{n_3}-\gamma_{n_3})}{\lambda_{n_3}\gamma_{n_3}},
\end{cases}
\end{equation}
where the condition
$C_u=C_v$ has been used.
The next step is to show that
\begin{equation}
\label{ctrim}
\begin{cases}
\gamma_{n_1}+\alpha_{n_1}=0,\\
\gamma_{n_2}+\alpha_{n_2}=0,\\
\gamma_{n_3}+\alpha_{n_3}\neq0,
\end{cases}
\end{equation}
being the remaining cases impossible. To prove the claim,
the argument is similar to the one of Theorem \ref{teobim}. For instance, assuming
$$
\begin{cases}
\gamma_{n_1}+\alpha_{n_1}=0,\\
\gamma_{n_2}+\alpha_{n_2}=0,\\
\gamma_{n_3}+\alpha_{n_3}=0,
\end{cases}
$$
system \eqref{sistemone} reduces to
$$
\begin{cases}
\lambda_{n_1}C_u = -\lambda_{n_1}^2 - 2k,\\\noalign{\vskip1mm}
\lambda_{n_2}C_u = -\lambda_{n_2}^2 - 2k,\\\noalign{\vskip1mm}
\lambda_{n_3}C_u = -\lambda_{n_3}^2 - 2k,
\end{cases}
$$
forcing
$$
2k = \lambda_{n_1}\lambda_{n_2} = \lambda_{n_2}\lambda_{n_3}
$$
and yielding a contradiction. The other cases can be carried out analogously;
the details are left to the reader.
Within \eqref{ctrim}, we take the difference of the last two equations
of \eqref{sistemone}, and we obtain
$$
\gamma_{n_3}=\alpha_{n_3}.
$$
Thus, system \eqref{sistemone} turns into
$$
\begin{cases}
\lambda_{n_1}C_u = -\lambda_{n_1}^2 - 2k,\\\noalign{\vskip1mm}
\lambda_{n_2}C_u = -\lambda_{n_2}^2 - 2k,\\\noalign{\vskip1mm}
C_u = -\lambda_{n_3},
\end{cases}
$$
implying
$$
\lambda_{n_1}(\lambda_{n_3}-\lambda_{n_1})
=\lambda_{n_2}(\lambda_{n_3}-\lambda_{n_2})=2k.
$$
Moreover, the value $C_u$ is determined by \eqref{Bautc1}, which provides the equality
$$
\cz \alpha_{n_1}^2\lambda_{n_1} + \cz \alpha_{n_2}^2\lambda_{n_2} + \cz \alpha_{n_3}^2 \lambda_{n_3} + \lambda_{n_3}+\beta=0.
$$
Hence, there exist trimodal {\sc ee}-solutions (explicitly computed) if and only if the triplet $(n_1,n_2,n_3)\in\mathbb{T}$.
\end{proof}

\begin{corollary}
\label{WWW}
Let $(u,v)$ be a trimodal {\sc ee}-solution. Then, with reference to \eqref{espressione},
if $n_1<n_2<n_3$
the eigenvalues $\lambda_{n_1},\lambda_{n_2},\lambda_{n_3}$
fulfill the relation
$$
\lambda_{n_1}+\lambda_{n_2}=\lambda_{n_3}.
$$
\end{corollary}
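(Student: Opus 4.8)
The plan is to deduce everything from Theorem \ref{teotrimodal}. Since $(u,v)$ is a trimodal {\sc ee}-solution involving the eigenvectors $e_{n_1},e_{n_2},e_{n_3}$ with $n_1<n_2<n_3$, the last bullet of that theorem forces the triplet to belong to $\mathbb{T}$; otherwise no such solution would exist. By the very definition of $\mathbb{T}$, we then have the pair of identities
$$
\lambda_{n_1}(\lambda_{n_3}-\lambda_{n_1})=2k \and \lambda_{n_2}(\lambda_{n_3}-\lambda_{n_2})=2k.
$$

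The second step is pure algebra. Equating the two left-hand sides gives $\lambda_{n_1}\lambda_{n_3}-\lambda_{n_1}^2=\lambda_{n_2}\lambda_{n_3}-\lambda_{n_2}^2$, i.e.
$$
(\lambda_{n_1}-\lambda_{n_2})\lambda_{n_3}=\lambda_{n_1}^2-\lambda_{n_2}^2=(\lambda_{n_1}-\lambda_{n_2})(\lambda_{n_1}+\lambda_{n_2}).
$$
The only point requiring a word of justification is that $\lambda_{n_1}-\lambda_{n_2}\neq0$: this is immediate because $n_1<n_2$ and the eigenvalues form a strictly increasing (in particular injective) sequence $0<\lambda_1<\lambda_2<\cdots$, as recorded in the Notation paragraph of \S2. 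Dividing by this nonzero factor yields $\lambda_{n_3}=\lambda_{n_1}+\lambda_{n_2}$, which is the claim.

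There is essentially no obstacle here; the content of the corollary is entirely carried by Theorem \ref{teotrimodal}, and what remains is a one-line factorization. The statement is worth isolating only because the relation $\lambda_{n_1}+\lambda_{n_2}=\lambda_{n_3}$ is a clean necessary condition on the admissible index triples — e.g.\ for $A=L$ it reads $n_1^2+n_2^2=n_3^2$, so trimodal {\sc ee}-solutions are indexed by Pythagorean triples — and this reformulation is convenient for the subsequent discussion in \S7.
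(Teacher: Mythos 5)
Your argument is correct and is essentially identical to the paper's own proof: both invoke Theorem \ref{teotrimodal} to place $(n_1,n_2,n_3)$ in $\mathbb{T}$, equate $\lambda_{n_1}(\lambda_{n_3}-\lambda_{n_1})=\lambda_{n_2}(\lambda_{n_3}-\lambda_{n_2})$, and divide by the nonzero factor $\lambda_{n_1}-\lambda_{n_2}$. The only difference is that you write out the one-line factorization explicitly, which the paper leaves to the reader.
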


\begin{proof}
In the light of Theorem \ref{teotrimodal}, we know that $(n_1,n_2,n_3)\in\mathbb{T}$. In particular,
$$
\lambda_{n_1}(\lambda_{n_3}-\lambda_{n_1})=\lambda_{n_2}(\lambda_{n_3}-\lambda_{n_2}).
$$
Since $\lambda_{n_1}\neq\lambda_{n_2}$, the conclusion follows.
\end{proof}
%%%%%%%%%%%%%%%%%%%%%%%%%%%%%%%%%%%%%%%%%%%%%%%%%%

%%%%%%%%%%%%%%%%%%%%%%%%%%%%%%%%%%%%%%%%%%%%%%%%%%
\section{General Bimodal Solutions}
\label{GB}

\noindent
In this section, we investigate the existence of general (not necessarily equidistributed)
bimodal solutions to system \eqref{MAIN}.
First, specializing Lemmas \ref{FILIPPO} and \ref{VITTO}, we obtain

\begin{theorem}
\label{bim}
Let $(u,v)$ be a bimodal solution. With reference to \eqref{espressione}, if
\begin{itemize}
\item $|\alpha_{n_1}|=|\gamma_{n_1}|\not=0$, or
\smallskip
\item $|\alpha_{n_2}|=|\gamma_{n_2}|\not=0$, or
\smallskip
\item $\alpha_{n_1}\alpha_{n_2}=\pm \gamma_{n_1}\gamma_{n_2}\not=0$, or
\smallskip
\item $\alpha_{n_1}\gamma_{n_2}=\alpha_{n_2}\gamma_{n_1}\not=0$,
\end{itemize}
then $(u,v)$ is an {\sc ee}-solution.
\end{theorem}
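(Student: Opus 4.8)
The plan is to observe that this theorem is merely the bimodal specialization of Lemmas \ref{FILIPPO} and \ref{VITTO}, so that no fresh computation is required. Write the bimodal solution as $u = \alpha_{n_1} e_{n_1} + \alpha_{n_2} e_{n_2}$ and $v = \gamma_{n_1} e_{n_1} + \gamma_{n_2} e_{n_2}$; by definition of a bimodal solution (and Lemma \ref{gene}) all four coefficients $\alpha_{n_1},\alpha_{n_2},\gamma_{n_1},\gamma_{n_2}$ are nonzero. Recalling that an {\sc ee}-solution is precisely one for which $C_u = C_v$, it suffices to match each of the four listed hypotheses with the hypothesis of one of the two earlier lemmas.

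Concretely, I would proceed case by case. If $|\alpha_{n_1}| = |\gamma_{n_1}| \neq 0$, take in Lemma \ref{FILIPPO} the coinciding pair $n_i = n_j = n_1$: then $\alpha_{n_1}\alpha_{n_1} = \alpha_{n_1}^2 = \gamma_{n_1}^2 = \gamma_{n_1}\gamma_{n_1}\neq 0$, and the lemma gives $C_u = C_v$. The case $|\alpha_{n_2}| = |\gamma_{n_2}| \neq 0$ is the same with $n_2$ replacing $n_1$. If $\alpha_{n_1}\alpha_{n_2} = \pm\,\gamma_{n_1}\gamma_{n_2} \neq 0$, this is verbatim the hypothesis of Lemma \ref{FILIPPO} with the distinct pair $n_i = n_1$, $n_j = n_2$, so again $C_u = C_v$. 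Finally, if $\alpha_{n_1}\gamma_{n_2} = \alpha_{n_2}\gamma_{n_1} \neq 0$, this is verbatim the hypothesis of Lemma \ref{VITTO} with $n_i = n_1 \neq n_2 = n_j$, whence $C_u = C_v$. In all four cases $(u,v)$ is therefore an {\sc ee}-solution.

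There is essentially no obstacle: the only point deserving a word is the verification that the non-vanishing requirements ``$\neq 0$'' of the two lemmas are automatically satisfied, which is immediate since $\alpha_{n_1},\alpha_{n_2}\neq 0$ by assumption and $\gamma_{n_1},\gamma_{n_2}\neq 0$ by Lemma \ref{gene}. Should one prefer a proof not invoking Lemmas \ref{FILIPPO} and \ref{VITTO}, one would simply repeat their short elimination arguments in this two-mode setting --- solving the $e_{n_1}$- and $e_{n_2}$-components of the weak formulation \eqref{weak} for $C_u$ and $C_v$ in terms of the amplitudes, then adding or subtracting the relations to force $|\alpha_{n_i}| = |\gamma_{n_i}|$ and substituting back --- but this only reproduces material already established in Section \ref{EES}.
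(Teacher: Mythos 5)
Your proof is correct and coincides with the paper's own argument: the paper derives Theorem \ref{bim} precisely by "specializing Lemmas \ref{FILIPPO} and \ref{VITTO}" to the two-mode case, exactly as you do. The case-by-case matching of hypotheses (including taking coinciding indices in Lemma \ref{FILIPPO} for the first two items) and the observation that the nonvanishing requirements follow from Lemma \ref{gene} are all the paper intends.
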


Even if Theorem \ref{bim} somehow tells that a bimodal solution is likely to be
an {\sc ee}-solution, it is possible to have bimodal solutions of not
equidistributed energy. Indeed, the complete picture will be given in the next Theorem \ref{teobimgen}
of \S\ref{CGBS}. Some preparatory work is needed.

\subsection{Technical lemmas}
In what follows, $(n_1,n_2)\in\N\times\N$ is an arbitrary, but fixed,
pair of natural numbers, with $n_1<n_2$. We will introduce several quantities depending on $(n_1,n_2)$.
Setting
\begin{equation}
\label{zzz}
\zeta=\zeta(n_1,n_2)=\frac{\lambda_{n_2}}{\lambda_{n_1}}>1,
\end{equation}
and
\begin{equation}
\label{www}
\sigma=\sigma(n_1,n_2)=\frac{k-\lambda_{n_1}\lambda_{n_2}}{k}\in\R,
\end{equation}
we consider the real numbers (defined whenever $\sigma\neq0$)
$$
\Phi = \Phi(n_1,n_2) =\frac{(\zeta+1)+(\zeta-1)\sigma^2}{\sigma\zeta},
$$
and
$$
\Psi = \Psi(n_1,n_2) =\frac{(\zeta+1)-(\zeta-1)\sigma^2}{\sigma}.
$$
By direct computations, we have the identity
$$
\Phi^2\zeta^2 - \Psi^2 = 4(\zeta^2-1),
$$
which, in turn, yields
\begin{equation}
\label{PS}
(\Phi^2 - 4)\zeta^2=\Psi^2 - 4= \frac{(\zeta-1)^2\sigma^4
-2(\zeta^2 + 1)\sigma^2+(\zeta+1)^2}{\sigma^2 }.
\end{equation}
This relation will be useful later. Then, we introduce
the real numbers (whenever defined)
\begin{align*}
X = X(n_1,n_2)&= \frac{\Phi + \sqrt{\Phi^2-4}}{2},\\
Y = Y(n_1,n_2)&= \frac{\Phi - \sqrt{\Phi^2-4}}{2},\\
W = W(n_1,n_2)&= \frac{\Psi + \sqrt{\Psi^2-4}}{2},\\
Z = Z(n_1,n_2)&= \frac{\Psi - \sqrt{\Psi^2-4}}{2}.
\end{align*}

\begin{lemma}
\label{prelk}
The following are equivalent.
\begin{itemize}
\item At least one of the numbers $X,Y,W,Z$ belongs to $\R$.
\smallskip
\item All the numbers $X,Y,W,Z$ belong to $\R$.
\smallskip
\item $\lambda_{n_1}\lambda_{n_2}\in (0,2k]\setminus \{ k \}$
or $\lambda_{n_1}(\lambda_{n_2}-\lambda_{n_1})\in [2k,\infty)$.
\end{itemize}
\end{lemma}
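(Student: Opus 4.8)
The plan is to reduce the whole statement to the sign analysis of a single real quadratic. First I would note that, directly from the definitions, $X$ and $Y$ are the two roots of $t^2-\Phi t+1=0$ and $W,Z$ the two roots of $t^2-\Psi t+1=0$, since $X+Y=\Phi$, $XY=1$ and likewise $W+Z=\Psi$, $WZ=1$. Consequently $\{X,Y\}\subset\R$ iff $\Phi^2\ge4$ and $\{W,Z\}\subset\R$ iff $\Psi^2\ge4$. Now the identity $(\Phi^2-4)\zeta^2=\Psi^2-4$ recorded in \eqref{PS}, together with $\zeta^2>0$, shows at once that $\Phi^2\ge4\Leftrightarrow\Psi^2\ge4$. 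This gives the equivalence of the first two items and reduces the proof to showing that $\Psi^2\ge4$ is equivalent to the third item.

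For the latter, I would use the second equality in \eqref{PS}: since $\sigma\ne0$ we have $\sigma^2>0$, hence $\Psi^2-4\ge0$ if and only if $Q(\sigma^2)\ge0$, where
$$Q(s)=(\zeta-1)^2 s^2-2(\zeta^2+1)s+(\zeta+1)^2.$$
A short computation gives the discriminant of $Q$ as $4\big[(\zeta^2+1)^2-(\zeta^2-1)^2\big]=16\zeta^2$, a perfect square; thus the roots of $Q$ are $s_\pm=\big((\zeta\pm1)/(\zeta-1)\big)^2$, that is $s_-=1$ and $s_+=\big((\zeta+1)/(\zeta-1)\big)^2$, with $1=s_-<s_+$ because $\zeta>1$. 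Since the leading coefficient $(\zeta-1)^2$ is positive, we conclude that $Q(\sigma^2)\ge0$ holds precisely when $\sigma^2\le1$ or $\sigma^2\ge\big((\zeta+1)/(\zeta-1)\big)^2$.

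It remains to translate these two alternatives into statements about $\lambda_{n_1}$ and $\lambda_{n_2}$ by means of $\sigma=1-\lambda_{n_1}\lambda_{n_2}/k$. Because $\lambda_{n_1}\lambda_{n_2}>0$ we always have $\sigma<1$; hence $\sigma^2\le1$ is equivalent to $\sigma\ge-1$, i.e.\ to $\lambda_{n_1}\lambda_{n_2}\le2k$, and once the standing assumption $\sigma\ne0$ (that is, $\lambda_{n_1}\lambda_{n_2}\ne k$) is recalled, this reads $\lambda_{n_1}\lambda_{n_2}\in(0,2k]\setminus\{k\}$. As for the second alternative, the branch $\sigma\ge(\zeta+1)/(\zeta-1)$ is impossible since $(\zeta+1)/(\zeta-1)>1>\sigma$, so it reduces to $\sigma\le-(\zeta+1)/(\zeta-1)$, equivalently $\lambda_{n_1}\lambda_{n_2}/k\ge2\zeta/(\zeta-1)$; using $\zeta/(\zeta-1)=\lambda_{n_2}/(\lambda_{n_2}-\lambda_{n_1})$ and multiplying by the positive factor $(\lambda_{n_2}-\lambda_{n_1})/\lambda_{n_2}$, this becomes $\lambda_{n_1}(\lambda_{n_2}-\lambda_{n_1})\ge2k$. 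These are exactly the two cases listed in the third item, which closes the chain of equivalences.

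The argument is essentially computational; the only points deserving attention are recognizing that the discriminant of $Q$ is the perfect square $16\zeta^2$ (so that its roots are found explicitly, and turn out to be $1$ and $\big((\zeta+1)/(\zeta-1)\big)^2$), and keeping track of signs in the final translation, where the hypothesis $\lambda_{n_1}\lambda_{n_2}>0$ is precisely what discards the spurious branch $\sigma\ge(\zeta+1)/(\zeta-1)$.
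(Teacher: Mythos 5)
Your proof is correct and follows essentially the same route as the paper's: reduce everything to $\Psi^2\ge 4$ via the identity \eqref{PS}, then analyze the resulting quadratic in $\sigma^2$ and translate back using $\sigma<1$. The only difference is that you spell out the ``elementary calculations'' (discriminant $16\zeta^2$, roots $1$ and $((\zeta+1)/(\zeta-1))^2$) that the paper leaves to the reader.
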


\begin{proof}
It is apparent to see that
$$X\in\R \quad \Leftrightarrow\quad \Phi^2\geq 4 \quad \Leftrightarrow\quad  Y\in\R,$$
and
$$W\in\R \quad \Leftrightarrow\quad \Psi^2\geq 4 \quad \Leftrightarrow\quad  Z\in\R.$$
Moreover, in the light of \eqref{PS},
$$\Phi^2\geq 4 \quad \Leftrightarrow\quad \Psi^2\geq 4.
$$
Therefore, in order to reach the conclusion, it is sufficient to show that
$$
\Psi^2\geq 4 \quad \Leftrightarrow \quad
\lambda_{n_1}\lambda_{n_2}\in(0,2k]\setminus \{ k \}\quad \text{or}\quad
\lambda_{n_1}(\lambda_{n_2}-\lambda_{n_1})\in [2k,\infty).
$$
To this end, exploiting \eqref{PS},
$$
\Psi^2\geq 4 \quad \Leftrightarrow \quad
\begin{cases}
\lambda_{n_1}\lambda_{n_2}\neq k,\\\noalign{\vskip0.7mm}
(\zeta-1)^2\sigma^4 -2(\zeta^2+1)\sigma^2 + (\zeta+1)^2\geq0.
\end{cases}
$$
Making use of the trivial inequality $\sigma<1$,
one can verify by elementary calculations that
$$
(\zeta-1)^2\sigma^4 -2(\zeta^2+1)\sigma^2 + (\zeta+1)^2\geq0
$$
if and only if
$$
\sigma \in \big(-\infty,\frac{\zeta+1}{1-\zeta}\bigg] \cup [-1,1).
$$
Since
$$
\sigma \in \big(-\infty,\frac{\zeta+1}{1-\zeta}\bigg]
\quad \Leftrightarrow \quad
\lambda_{n_1}(\lambda_{n_2}-\lambda_{n_1})\in [2k,\infty),
$$
and
$$
\sigma \in [-1,1)
\quad \Leftrightarrow \quad
\lambda_{n_1}\lambda_{n_2}\in (0,2k] \setminus \{ k \},
$$
the proof is finished.
\end{proof}

\begin{lemma}
\label{bri}
The following are equivalent.
\begin{itemize}
\item $X=Y$.
\smallskip
\item $W=Z$.
\smallskip
\item $\lambda_{n_1}\lambda_{n_2}=2k$\, or\, $\lambda_{n_1}(\lambda_{n_2}-\lambda_{n_1}) = 2k$.
\end{itemize}
\end{lemma}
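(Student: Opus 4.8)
The plan is to reduce each of the equalities $X=Y$ and $W=Z$ to a condition on $\Phi$ and $\Psi$, and then to convert that condition into a statement about $\lambda_{n_1},\lambda_{n_2},k$ by means of \eqref{PS} and the explicit form of $\sigma$ in \eqref{www}. First I would observe that, whenever $X,Y,W,Z$ are defined (equivalently, by Lemma \ref{prelk}, when all four are real), the definitions give $X-Y=\sqrt{\Phi^2-4}$ and $W-Z=\sqrt{\Psi^2-4}$, so that
$$
X=Y \quad\Leftrightarrow\quad \Phi^2=4 \and W=Z\quad\Leftrightarrow\quad \Psi^2=4.
$$
Since $\zeta>1$, the identity $(\Phi^2-4)\zeta^2=\Psi^2-4$ recorded in \eqref{PS} shows that $\Phi^2=4$ if and only if $\Psi^2=4$; this already establishes the equivalence of the first two bullets, and reduces the whole statement to characterizing the condition $\Psi^2=4$.

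To do this I would use the second equality in \eqref{PS}, which turns $\Psi^2=4$ into the polynomial condition
$$
(\zeta-1)^2\sigma^4-2(\zeta^2+1)\sigma^2+(\zeta+1)^2=0,
$$
a quadratic in the variable $\sigma^2$. A short computation gives discriminant $16\zeta^2$, hence the two roots $\sigma^2=1$ and $\sigma^2=\big(\tfrac{\zeta+1}{\zeta-1}\big)^2$. Now, exactly as in the proof of Lemma \ref{prelk}, the constraint $\sigma<1$ (valid because $\sigma=\tfrac{k-\lambda_{n_1}\lambda_{n_2}}{k}$ and $\lambda_{n_1}\lambda_{n_2}>0$) discards the values $\sigma=1$ and $\sigma=\tfrac{\zeta+1}{\zeta-1}>1$. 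Therefore $\Psi^2=4$ holds precisely when $\sigma=-1$ or $\sigma=\tfrac{\zeta+1}{1-\zeta}$.

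It remains to translate these two values of $\sigma$ back, which is the same bookkeeping already carried out in Lemma \ref{prelk}: recalling $\zeta=\lambda_{n_2}/\lambda_{n_1}$ and $\sigma=\tfrac{k-\lambda_{n_1}\lambda_{n_2}}{k}$, one finds that $\sigma=-1$ is equivalent to $\lambda_{n_1}\lambda_{n_2}=2k$, while $\sigma=\tfrac{\zeta+1}{1-\zeta}$ is equivalent to $\lambda_{n_1}(\lambda_{n_2}-\lambda_{n_1})=2k$. This produces the third bullet and closes the chain of equivalences. The only genuine computation involved is the factorization of the quartic in $\sigma$ and keeping track of which roots are compatible with $\sigma<1$, and since both of these are already essentially present in the proof of Lemma \ref{prelk}, I do not expect any real obstacle.
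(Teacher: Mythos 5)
Your proof is correct and takes essentially the approach the paper intends: the paper omits the proof of Lemma \ref{bri}, saying only that it runs along the same lines as Lemma \ref{prelk}, and your argument is precisely that specialization to the equality case --- reducing $X=Y$ and $W=Z$ to $\Phi^2=4$ and $\Psi^2=4$ via \eqref{PS}, solving the resulting biquadratic in $\sigma$, and discarding the two roots incompatible with $\sigma<1$. Nothing further is needed.
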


The argument goes along the same lines of Lemma \ref{prelk} (actually, it is even simpler). For this reason,
the proof is omitted and left to the reader.

\smallskip
At this point, we state a simple but crucial identity, which follows immediately from \eqref{PS} and
the definitions of the numbers $\zeta,\Phi,\Psi,X,Y,W,Z$.

\begin{lemma}
We have the equality
\begin{equation}
\label{g1}
\zeta X -W = \zeta Y - Z=(\zeta-1)\sigma,
\end{equation}
provided that the expressions above are well-defined.
\end{lemma}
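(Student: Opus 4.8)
The plan is to establish both equalities by a direct computation, splitting each of $\zeta X - W$ and $\zeta Y - Z$ into a ``rational'' contribution and a ``radical'' contribution and handling them separately. Recalling the definitions of $X,Y,W,Z$ in terms of $\Phi$ and $\Psi$, one has
$$
\zeta X - W = \frac{\zeta\Phi-\Psi}{2} + \frac{\zeta\sqrt{\Phi^2-4}-\sqrt{\Psi^2-4}}{2},
\qquad
\zeta Y - Z = \frac{\zeta\Phi-\Psi}{2} - \frac{\zeta\sqrt{\Phi^2-4}-\sqrt{\Psi^2-4}}{2}.
$$
Hence it suffices to prove the two auxiliary identities $\zeta\Phi - \Psi = 2(\zeta-1)\sigma$ and $\zeta\sqrt{\Phi^2-4} = \sqrt{\Psi^2-4}$, from which the claimed common value $(\zeta-1)\sigma$ follows at once.

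For the first auxiliary identity I would simply substitute the defining formulas for $\Phi$ and $\Psi$: the factor $\zeta$ cancels the $\zeta$ in the denominator $\sigma\zeta$ of $\Phi$, so that $\zeta\Phi$ and $\Psi$ share the common denominator $\sigma$; subtracting the numerators $(\zeta+1)+(\zeta-1)\sigma^2$ and $(\zeta+1)-(\zeta-1)\sigma^2$ leaves $2(\zeta-1)\sigma^2$, whence $\zeta\Phi-\Psi = 2(\zeta-1)\sigma$. This uses only $\sigma\neq0$, which is guaranteed by the standing well-definedness hypothesis on $\Phi,\Psi$.

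For the radical part I would invoke the relation \eqref{PS}, namely $(\Phi^2-4)\zeta^2 = \Psi^2-4$. Since $\zeta = \lambda_{n_2}/\lambda_{n_1} > 1 > 0$ by \eqref{zzz}, and since the hypothesis that $X,Y,W,Z$ are well-defined forces $\Phi^2\geq4$ and $\Psi^2\geq4$ (by Lemma \ref{prelk}, or directly from \eqref{PS}), I may take square roots to obtain $\zeta\sqrt{\Phi^2-4} = \sqrt{\zeta^2(\Phi^2-4)} = \sqrt{\Psi^2-4}$, so the radical contribution vanishes identically. Combining the two auxiliary identities yields $\zeta X - W = \zeta Y - Z = (\zeta-1)\sigma$, as asserted. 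There is no genuine obstacle here; the only point requiring a word of care is the sign bookkeeping when pulling the positive factor $\zeta$ inside the square root, which is legitimate precisely because $\zeta>0$.
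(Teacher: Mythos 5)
Your proof is correct and is essentially the argument the paper intends: the paper omits the proof entirely, stating only that the identity ``follows immediately from \eqref{PS} and the definitions,'' and your computation — splitting $\zeta X-W$ and $\zeta Y-Z$ into the rational part $\tfrac{1}{2}(\zeta\Phi-\Psi)=(\zeta-1)\sigma$ and the radical part, which vanishes by \eqref{PS} since $\zeta>0$ — is precisely the verification being alluded to. No further comment is needed.
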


\subsection{The numbers $\mathfrak{m}$ and $\mathfrak{M}$}
A crucial role in our analysis will be played by the following
two real numbers (again, defined whenever $\sigma\neq0$)
\begin{equation}
\label{mpiccolo}
\mathfrak{m}= \mathfrak{m}(n_1,n_2) =  \frac{k^2 + k \lambda_{n_2}(\lambda_{n_2}-\lambda_{n_1})
+\lambda_{n_1}^2\lambda_{n_2}^2}{(\lambda_{n_1}\lambda_{n_2}-k)\lambda_{n_2}},
\end{equation}
and
\begin{equation}
\label{mgrande}
\mathfrak{M} = \mathfrak{M}(n_1,n_2) =\frac{k^2 - k \lambda_{n_1}(\lambda_{n_2}-\lambda_{n_1})
+\lambda_{n_1}^2\lambda_{n_2}^2}{(\lambda_{n_1}\lambda_{n_2}-k)\lambda_{n_1}}.
\end{equation}
In particular, it is immediate to verify that
$$
\sigma<0 \quad\Rightarrow \quad \mathfrak{M}  > \mathfrak{m} > 0.
$$
Such numbers can be written in several different ways as functions of $X,Y,W,Z$.
To see that, we will exploit the relations
\begin{align}
\label{key1}
\begin{cases}
XY = 1,\\
X+Y = \Phi,\\
WZ = 1,\\
W+Z = \Psi,
\end{cases}
\end{align}
valid whenever $X,Y,W,Z\in\R$.
Then, setting
\begin{align*}
&f=f(n_1,n_2)=\frac{kX - \lambda_{n_1}^2-k}{\lambda_{n_1}},\\\noalign{\vskip1mm}
&g=g(n_1,n_2)=\frac{kY - \lambda_{n_1}^2-k}{\lambda_{n_1}},
\end{align*}
and making use of \eqref{g1}, it is easy to prove that
\begin{equation}
\label{USO}
\begin{cases}
\displaystyle
f = \frac{kW - \lambda_{n_2}^2-k}{\lambda_{n_2}},\\\noalign{\vskip2mm}
\displaystyle
g = \frac{kZ - \lambda_{n_2}^2-k}{\lambda_{n_2}}.
\end{cases}
\end{equation}

\begin{lemma}
\label{lll}
We have the equalities
$$
\mathfrak{m} = -g - \frac{k W^2 (X-Y)}{\lambda_{n_1}(W^2-1)}= - g - \frac{k (X-Y)}{\lambda_{n_1}(1-Z^2)},
$$
and
$$
\mathfrak{M} = -g - \frac{kX^2(X-Y)}{\lambda_{n_1}(X^2-1)}= - g - \frac{k(X-Y)}{\lambda_{n_1}(1-Y^2)}.
$$
provided that the expressions above are well-defined.
\end{lemma}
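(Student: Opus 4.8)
The plan is to reduce each of the four claimed equalities to the elementary relations $XY=WZ=1$, $X+Y=\Phi$, $W+Z=\Psi$ of \eqref{key1}, the coupling identity \eqref{g1}, and the alternative formula \eqref{USO} for $g$, so that in the end only a single algebraic identity in $\lambda_{n_1},\lambda_{n_2},k$ remains to be checked. First I would observe that the two expressions on each line of the statement are trivially equal: since $XY=1$ one has $Y=1/X$, hence $1-Y^2=(X^2-1)/X^2$, and therefore $\frac{k(X-Y)}{\lambda_{n_1}(1-Y^2)}=\frac{kX^2(X-Y)}{\lambda_{n_1}(X^2-1)}$; likewise $WZ=1$ gives $1-Z^2=(W^2-1)/W^2$, so that $\frac{k(X-Y)}{\lambda_{n_1}(1-Z^2)}=\frac{kW^2(X-Y)}{\lambda_{n_1}(W^2-1)}$. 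It thus suffices to prove the expressions for $\mathfrak{M}$ and $\mathfrak{m}$ in which $(1-Y^2)$ and $(1-Z^2)$ appear in the denominators.

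For $\mathfrak{M}$: from $X=1/Y$ we have $X-Y=(1-Y^2)/Y$, whence
$$-g-\frac{k(X-Y)}{\lambda_{n_1}(1-Y^2)}=-g-\frac{k}{\lambda_{n_1}Y}.$$
Inserting $-g=\lambda_{n_1}^{-1}(\lambda_{n_1}^2+k-kY)$, which is just the definition of $g$, together with $Y+Y^{-1}=X+Y=\Phi$, the right-hand side collapses to $\lambda_{n_1}^{-1}(\lambda_{n_1}^2+k-k\Phi)$. It then remains to substitute $\Phi=\frac{(\zeta+1)+(\zeta-1)\sigma^2}{\sigma\zeta}$ with $\zeta=\lambda_{n_2}/\lambda_{n_1}$, $\sigma=(k-\lambda_{n_1}\lambda_{n_2})/k$, to clear denominators, and to check that the outcome coincides with the expression for $\mathfrak{M}$ in \eqref{mgrande}.

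For $\mathfrak{m}$: here the numerator still features $X-Y$, which I first convert into a quantity in $Z$. By \eqref{g1} one has $\zeta(X-Y)=W-Z$; since $\zeta\lambda_{n_1}=\lambda_{n_2}$ and $W=1/Z$ gives $W-Z=(1-Z^2)/Z$, this yields $\frac{k(X-Y)}{\lambda_{n_1}(1-Z^2)}=\frac{k(W-Z)}{\lambda_{n_2}(1-Z^2)}=\frac{k}{\lambda_{n_2}Z}$. Then, using the second form of $g$ in \eqref{USO}, namely $-g=\lambda_{n_2}^{-1}(\lambda_{n_2}^2+k-kZ)$, and $Z+Z^{-1}=W+Z=\Psi$, the sum $-g-\frac{k}{\lambda_{n_2}Z}$ becomes $\lambda_{n_2}^{-1}(\lambda_{n_2}^2+k-k\Psi)$; substituting $\Psi=\frac{(\zeta+1)-(\zeta-1)\sigma^2}{\sigma}$ and simplifying exactly as before produces the expression for $\mathfrak{m}$ in \eqref{mpiccolo}.

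All the steps are purely mechanical, and all the quantities involved are well-defined precisely under the hypotheses implicit in the statement (so that $X,Y,W,Z\in\R$ by Lemma \ref{prelk}, $\sigma\neq0$, and $X\neq Y$, i.e.\ $\lambda_{n_1}\lambda_{n_2}\neq 2k$ and $\lambda_{n_1}(\lambda_{n_2}-\lambda_{n_1})\neq 2k$, so that no denominator vanishes). The only point that requires a little care is the last polynomial simplification in the two cases: one expands $(\lambda_{n_1}^2+k)(k-\lambda_{n_1}\lambda_{n_2})$ against the numerator of $k\Phi$ (respectively $k\Psi$) and verifies that the cubic cross terms in $\lambda$ cancel, leaving exactly the numerators of $\mathfrak{M}$ and $\mathfrak{m}$. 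This bookkeeping is the main --- entirely routine --- obstacle.
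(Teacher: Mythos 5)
Your argument is correct and follows essentially the same route as the paper's proof: both reduce each fraction via $XY=WZ=1$, pass from the $\lambda_{n_1}$-form to the $\lambda_{n_2}$-form of $g$ and of $X-Y$ using \eqref{USO} and \eqref{g1}, collapse the sums to $\lambda_{n_1}^{-1}(\lambda_{n_1}^2+k-k\Phi)$ and $\lambda_{n_2}^{-1}(\lambda_{n_2}^2+k-k\Psi)$, and finish by substituting the definitions of $\Phi,\Psi,\zeta,\sigma$. The final polynomial check you defer as routine is exactly the computation the paper writes out, and it does close as you claim.
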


\begin{proof}
Exploiting \eqref{key1}, we obtain the identities
\begin{align*}
&\frac{W^2}{W^2-1} = \frac{W}{W-Z}=\frac{1}{1-Z^2},\\\noalign{\vskip1mm}
&\frac{X^2}{X^2-1} = \frac{X}{X-Y}=\frac{1}{1-Y^2}.
\end{align*}
Thus, in order to complete the proof, it is sufficient to show that
$$
-\mathfrak{m} = g + \frac{k W^2 (X-Y)}{\lambda_{n_1}(W^2-1)},
$$
and
$$
-\mathfrak{M} = g + \frac{kX^2(X-Y)}{\lambda_{n_1}(X^2-1)}.
$$
To this end,
in the light of \eqref{g1}, \eqref{key1},
\eqref{USO} and the definitions of $\zeta,\sigma,\Psi,g$, we compute
\begin{align*}
g +\frac{k W^2 (X-Y)}{\lambda_{n_1}(W^2-1)} &=\frac{kY - \lambda_{n_1}^2-k}{\lambda_{n_1}}+
\frac{k W^2 (X-Y)}{\lambda_{n_1}(W^2-1)} \\\noalign{\vskip0.7mm}
&=\frac{kZ - \lambda_{n_2}^2-k}{\lambda_{n_2}}+
\frac{k W^2 (W-Z)}{\lambda_{n_2}(W^2-1)} \\\noalign{\vskip0.7mm}
&=\frac{kZ - \lambda_{n_2}^2-k}{\lambda_{n_2}}+
\frac{k W}{\lambda_{n_2}} \\\noalign{\vskip0.7mm}
&= \frac{k\Psi- \lambda_{n_2}^2-k}{\lambda_{n_2}}\\\noalign{\vskip0.7mm}
&= \frac{k\zeta - k\sigma^2\zeta +k\sigma^2
-\sigma \lambda_{n_2}^2 + \lambda_{n_1}\lambda_{n_2}}{\sigma \lambda_{n_2}} \\\noalign{\vskip0.7mm}
&= \frac{(k-\lambda_{n_1}\lambda_{n_2})^2 + k\lambda_{n_2}^2
+k\lambda_{n_1}\lambda_{n_2}}{\sigma k\lambda_{n_2}} \\\noalign{\vskip0.7mm}
&=  -\mathfrak{m},
\end{align*}
while, making use of \eqref{key1}, along with the definitions of $\zeta,\sigma,\Phi,g$, we have
\begin{align*}
g + \frac{kX^2(X-Y)}{\lambda_{n_1}(X^2-1)} &=\frac{kY - \lambda_{n_1}^2-k}{\lambda_{n_1}}+ \frac{kX}{\lambda_{n_1}}
\\\noalign{\vskip0.7mm}
&=\frac{k\Phi- \lambda_{n_1}^2-k}{\lambda_{n_1}}\\\noalign{\vskip0.7mm}
&= \frac{k + k\sigma^2\zeta -k\sigma^2
-\sigma \lambda_{n_1}\lambda_{n_2} + \lambda_{n_2}^2}{\sigma \lambda_{n_1}\zeta} \\\noalign{\vskip0.7mm}
&= \frac{(k-\lambda_{n_1}\lambda_{n_2})^2 + k\lambda_{n_1}^2
+k\lambda_{n_2}\lambda_{n_1}}{\sigma k\lambda_{n_1}} \\\noalign{\vskip0.7mm}
&=  -\mathfrak{M}.
\end{align*}
The lemma is proved.
\end{proof}

\subsection{The circle-ellipse systems}

We need to investigate the solvability of the circle-ellipse systems
\begin{equation}
\label{SIS1}
\begin{cases}
\cz r^2 \lambda_{n_1} + \cz t^2\lambda_{n_2} + \beta = f,\\\noalign{\vskip1mm}
\cz r^2 \lambda_{n_1}X^2 + \cz t^2 \lambda_{n_2}W^2 + \beta = g,
\end{cases}
\end{equation}
and
\begin{equation}
\label{SIS2}
\begin{cases}
\cz r^2 \lambda_{n_1} + \cz t^2\lambda_{n_2} + \beta = g,\\\noalign{\vskip1mm}
\cz r^2 \lambda_{n_1}Y^2 + \cz t^2 \lambda_{n_2}Z^2 + \beta = f,
\end{cases}
\end{equation}
in the unknowns $r$ and $t$.

\begin{lemma}
\label{etabeta}
The following hold.
\begin{itemize}
\item Let $\lambda_{n_1}\lambda_{n_2}\in(0,k)$. Then neither
system \eqref{SIS1} nor
\eqref{SIS2} admit real solutions.

\smallskip
\item Let $\lambda_{n_1}\lambda_{n_2}\in(k,2k)$.
Then system \eqref{SIS1}
admits real solutions $(r,t)$ with $rt\neq0$ if and only if the same does
\eqref{SIS2}, if and only if
$$\mathfrak{m} < -\beta < \mathfrak{M}.$$
In which case, system \eqref{SIS1} admits exactly four distinct real solutions, and the
same does \eqref{SIS2}. Besides, they do not share any solution.

\smallskip
\item Let $\lambda_{n_1}(\lambda_{n_2}-\lambda_{n_1})\in(2k,\infty)$. Then system \eqref{SIS1}
admits real solutions $(r,t)$ with $rt\neq0$ if and only if the same does
\eqref{SIS2}, if and only if
$$
\mathfrak{M}<-\beta.
$$
In which case, system \eqref{SIS1} admits exactly four distinct real solutions, and the
same does \eqref{SIS2}. Besides, they do not share any solution.
\end{itemize}
\end{lemma}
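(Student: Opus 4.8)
\emph{Proof plan.} Since $\cz>0$, a pair $(r,t)\in\R^2$ solves \eqref{SIS1} exactly when the nonnegative numbers $R=\cz\lambda_{n_1}r^2$ and $T=\cz\lambda_{n_2}t^2$ solve the \emph{linear} system $R+T=f-\beta$, $X^2R+W^2T=g-\beta$, the requirement $rt\neq0$ being equivalent to $R>0$ and $T>0$; likewise \eqref{SIS2} amounts to $R+T=g-\beta$, $Y^2R+Z^2T=f-\beta$ with $R,T\ge0$. As a preliminary, I would note that in each of the three regimes one has $\lambda_{n_1}\lambda_{n_2}\neq k$ and either $\lambda_{n_1}\lambda_{n_2}\in(0,2k]\setminus\{k\}$ or $\lambda_{n_1}(\lambda_{n_2}-\lambda_{n_1})\in[2k,\infty)$, while neither $\lambda_{n_1}\lambda_{n_2}=2k$ nor $\lambda_{n_1}(\lambda_{n_2}-\lambda_{n_1})=2k$ holds; hence Lemma~\ref{prelk} makes $X,Y,W,Z$ real and Lemma~\ref{bri} gives $X\neq Y$, $W\neq Z$. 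Consequently $f-g=\tfrac{k}{\lambda_{n_1}}(X-Y)\neq0$ (used twice below) and $XY=WZ=1$.

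Next I would solve the two linear systems by Cramer's rule and recast the outcome in terms of $\mathfrak{m}$ and $\mathfrak{M}$. Combining Lemma~\ref{lll} with $f-g=\tfrac{k}{\lambda_{n_1}}(X-Y)$ and with $Y=1/X$, $Z=1/W$, one verifies
\[
\frac{g-W^2 f}{W^2-1}=\frac{f-Z^2 g}{Z^2-1}=\mathfrak{m},\qquad
\frac{g-X^2 f}{X^2-1}=\frac{f-Y^2 g}{Y^2-1}=\mathfrak{M}.
\]
Writing $B=-\beta$ and $D=W^2-X^2$ (the determinant of the linear system replacing \eqref{SIS1}), these identities turn the Cramer formulas into
\[
R^{*}=\frac{(W^2-1)(B-\mathfrak{m})}{D},\qquad T^{*}=\frac{(X^2-1)(\mathfrak{M}-B)}{D},
\]
while, the determinant replacing \eqref{SIS2} being $Z^2-Y^2=-D/(X^2W^2)$, the unique solution of the second system is
\[
R^{**}=\frac{X^2W^2\,(Z^2-1)(B-\mathfrak{m})}{-D},\qquad T^{**}=\frac{X^2W^2\,(Y^2-1)(\mathfrak{M}-B)}{-D}.
\]

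The heart of the proof is the ensuing sign count. The three regimes are, respectively, $\sigma\in(0,1)$, $\sigma\in(-1,0)$ and $\sigma<-1$; from the expressions for $\Phi,\Psi$, the identity \eqref{PS} and \eqref{g1} (which yields $W=\zeta X-(\zeta-1)\sigma$) one locates the roots: for $\lambda_{n_1}\lambda_{n_2}\in(0,k)$ one gets $0<Z<Y<1<X<W$; for $\lambda_{n_1}\lambda_{n_2}\in(k,2k)$ one gets $Z<Y<-1<X<W<0$; for $\lambda_{n_1}(\lambda_{n_2}-\lambda_{n_1})>2k$ one gets $Y<-1<X<0<Z<1<W$. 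In particular $D\neq0$ throughout, so the linear systems are uniquely solvable. In the first regime I would not use the displayed formulas at all: subtracting the two equations of the system replacing \eqref{SIS1} gives $(X^2-1)R+(W^2-1)T=g-f<0$ with both coefficients strictly positive, incompatible with $R,T\ge0$, and likewise $(Y^2-1)R+(Z^2-1)T=f-g>0$ has both coefficients negative --- hence neither system has any real solution. In the second regime $W^2-1<0$, $X^2-1<0$, $D<0$, so $R^{*}$ has the sign of $B-\mathfrak{m}$ and $T^{*}$ that of $\mathfrak{M}-B$; since $\sigma<0$ gives $\mathfrak{m}<\mathfrak{M}$ (as observed earlier), $R^{*},T^{*}>0\iff\mathfrak{m}<-\beta<\mathfrak{M}$, and the parallel computation for \eqref{SIS2} (now $Z^2-1>0$, $Y^2-1>0$, $Z^2-Y^2>0$) yields the same condition. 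In the third regime $W^2-1>0$, $X^2-1<0$, $D>0$, so $R^{*}$ has the sign of $B-\mathfrak{m}$ and $T^{*}$ that of $B-\mathfrak{M}$; using $\mathfrak{m}<\mathfrak{M}$ once more, $R^{*},T^{*}>0\iff-\beta>\mathfrak{M}$, and again \eqref{SIS2} gives the same.

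Finally, whenever $R^{*}>0$ and $T^{*}>0$ the real solutions of \eqref{SIS1} are precisely the four pairs $\big(\pm\sqrt{R^{*}/(\cz\lambda_{n_1})},\ \pm\sqrt{T^{*}/(\cz\lambda_{n_2})}\big)$, all with $rt\neq0$, and similarly for \eqref{SIS2}; a pair solving both systems would force $f-\beta=R+T=g-\beta$, i.e.\ $f=g$, against $f-g\neq0$, so the two solution sets are disjoint. I expect the only genuinely delicate point to be the bookkeeping of the three sign patterns of $(X^2-1,\,W^2-1,\,W^2-X^2)$ --- equivalently, the location of $X$ and $W$ relative to $\pm1$ and to each other in each regime; once these are nailed down, the rest is just reading inequalities off the two displayed formulas and invoking $\mathfrak{M}>\mathfrak{m}$.
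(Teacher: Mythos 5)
Your proof is correct, and it packages the argument differently from the paper. The paper substitutes $s=\sqrt{\zeta}\,t$ so that \eqref{SIS1} and \eqref{SIS2} become intersections of a circle with an ellipse, both centered at the origin, and then invokes the geometric criterion that such an intersection misses the axes exactly when the radius lies strictly between the two semi-axes; the resulting inequalities are converted into $\mathfrak{m}<-\beta<\mathfrak{M}$ (resp.\ $\mathfrak{M}<-\beta$) via Lemma~\ref{lll}. You instead pass all the way to the squared variables $R=\cz\lambda_{n_1}r^2$, $T=\cz\lambda_{n_2}t^2$, solve the resulting linear system by Cramer's rule, and use Lemma~\ref{lll} (together with $f-g=\tfrac{k}{\lambda_{n_1}}(X-Y)$ and $XY=WZ=1$) to factor the unique solution as $R^{*}=(W^2-1)(B-\mathfrak{m})/D$, $T^{*}=(X^2-1)(\mathfrak{M}-B)/D$, after which the solvability condition is read off from the signs of $W^2-1$, $X^2-1$ and $D=W^2-X^2$. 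The two routes are equivalent --- both rest on the same orderings of $X,Y,W,Z$ relative to $\pm1$ in the three regimes, on Lemma~\ref{lll}, and on Lemma~\ref{bri} (via $f\neq g$) for the disjointness and for the uniqueness of the linear solution --- but your closed forms make it completely transparent that $\mathfrak{m}$ and $\mathfrak{M}$ are the exact thresholds, and your first-regime shortcut (subtracting the two equations to get a sign contradiction on the closed quadrant $R,T\ge0$) is cleaner than locating the intersection there. One minor imprecision: the third regime corresponds to $\sigma\le(\zeta+1)/(1-\zeta)$, which is strictly contained in $\sigma<-1$, so the gloss ``$\sigma<-1$'' is necessary but not sufficient for that regime; this is harmless here because you derive the ordering $Y<-1<X<0<Z<1<W$ from the correct hypothesis $\lambda_{n_1}(\lambda_{n_2}-\lambda_{n_1})>2k$.
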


\begin{proof}
We first observe that systems \eqref{SIS1} and \eqref{SIS2}
do not share any solution. Indeed, if it were so, we would have $f=g$ (meaning that $X=Y$) and
therefore, in the light of Lemma \ref{bri},
$$
\lambda_{n_1}\lambda_{n_2} = 2k {\qquad\text{or}\qquad}
\lambda_{n_1}(\lambda_{n_2}-\lambda_{n_1})=2k.
$$
Then, setting $s=\sqrt{\zeta}t$, we can rewrite \eqref{SIS1} and \eqref{SIS2} as
\begin{equation}
\label{SIS1bis}
\begin{cases}
r^2 + s^2 = F,\\\noalign{\vskip1mm}
X^2 r^2 + W^2 s^2 = G,
\end{cases}
\end{equation}
and
\begin{equation}
\label{SIS2bis}
\begin{cases}
r^2 + s^2= G,\\\noalign{\vskip1mm}
Y^2r^2 + Z^2s^2 = F,
\end{cases}
\end{equation}
where
$$
F= \frac{f-\beta}{\cz\lambda_{n_1}}
\and
G= \frac{g-\beta}{\cz\lambda_{n_1}}.
$$
In particular, calling
$$\nu= \frac{k (X-Y)}{\cz\lambda_{n_1}^2}\geq0,$$
we have the equality
\begin{equation}
\label{FG}
F = G + \nu.
\end{equation}
Systems \eqref{SIS1bis} and \eqref{SIS2bis} represent the intersection between a circle and an ellipse,
both centered at the origin. Therefore, real solutions $(r,s)$ with $rs\neq0$ exist if and only if the radius
of the circle is strictly greater than the minor semi-axis of the ellipse and
strictly smaller than the major semi-axis of the ellipse. In such a case, there are exactly four distinct solutions.
We shall distinguish three cases.

\medskip
\noindent
$\diamond$ {\it Case 1:\ }$\lambda_{n_1}\lambda_{n_2}\in (0,k)$.
By direct computations, one can easily see that
$$
\Psi > \Phi > 2,
$$
implying
$$
W>X>1>Y>Z>0.
$$
In particular, the number $\nu$ is strictly positive. As a consequence,
in the light of the discussion above and \eqref{FG}, system~\eqref{SIS1bis}
admits real solutions $(r,s)$ with $rs\neq0$ if and only if
$$
\frac{G}{W^2} < G+ \nu < \frac{G}{X^2}.
$$
Being $X^2>1$, it is apparent to see that the relation above is impossible.
Analogously, system \eqref{SIS2bis}
admits real solutions $(r,s)$ with $rs\neq0$ if and only if
$$
\frac{G+\nu}{Y^2} < G  < \frac{G+\nu}{Z^2}.
$$
Again, being $Y^2<1$, the relation is impossible.
In conclusion, neither
system~\eqref{SIS1bis} nor \eqref{SIS2bis} admit real solutions.

\medskip
\noindent
$\diamond$ {\it Case 2:\ }$\lambda_{n_1}\lambda_{n_2}\in (k,2k)$.
By direct computations, one can easily see that
$$
\Psi < \Phi < -2,
$$
implying
$$
Z < Y < -1 < X < W < 0.
$$
Analogously to the previous case, we infer that system \eqref{SIS1bis}
admits real solutions $(r,s)$ with $rs\neq0$ if and only if
$$
\frac{G}{X^2} < G+ \nu < \frac{G}{W^2}.
$$
Being $W^2<1$ and $X^2<1$, in the light of Lemma \ref{lll} we get
$$
\mathfrak{m} = - g - \frac{k W^2 (X-Y)}{\lambda_{n_1}(W^2-1)} < -\beta < -g - \frac{k X^2 (X-Y)}{\lambda_{n_1}(X^2-1)}=\mathfrak{M}.
$$
Moreover, system \eqref{SIS2bis}
admits real solutions $(r,s)$ with $rs\neq0$ if and only if
$$
\frac{G+\nu}{Z^2} < G  < \frac{G+\nu}{Y^2}.
$$
Being $Z^2>1$ and $Y^2>1$, invoking Lemma \ref{lll} we conclude that
$$
\mathfrak{m} = -g - \frac{k (X-Y)}{\lambda_{n_1}(1-Z^2)}< -\beta < -g - \frac{k (X-Y)}{\lambda_{n_1}(1-Y^2)}=\mathfrak{M}.
$$

\medskip
\noindent
$\diamond$ {\it Case 3:\ }$\lambda_{n_1}(\lambda_{n_2}-\lambda_{n_1})\in (2k,\infty)$.
By direct computations, one can easily see that
$$
\Phi < - 2 \and \Psi > 2,
$$
implying
$$
Y < -1 < X < 0 < Z < 1 < W.
$$
Arguing as in the previous cases, system~\eqref{SIS1bis}
admits real solutions $(r,s)$ with $rs\neq0$ if and only if
$$
\frac{G}{W^2} < G+ \nu < \frac{G}{X^2}.
$$
Since $W^2>1$, the relation above reduces to
$$
G+ \nu < \frac{G}{X^2}.
$$
Being $X^2<1$, making use of Lemma \ref{lll} we end up with
$$
\mathfrak{M}=- g - \frac{kX^2(X-Y)}{\lambda_{n_1}(X^2-1)}<-\beta.
$$
On the other hand, system \eqref{SIS2bis}
admits real solutions $(r,s)$ with $rs\neq0$ if and only if
$$
\frac{G+\nu}{Y^2} < G  < \frac{G+\nu}{Z^2}.
$$
Again, since $0 < Z^2 < 1$, the relation above reduces to
$$
\frac{G+\nu}{Y^2} < G.
$$
Being $Y^2>1$, an exploitation of Lemma \ref{lll} leads to
$$
\mathfrak{M}= - g - \frac{k(X-Y)}{\lambda_{n_1}(1-Y^2)}<-\beta.
$$
The proof is finished.
\end{proof}

\subsection{Classification of general bimodal solutions}
\label{CGBS}
In order to classify the general bimodal solutions, we introduce the
(disjoint and possibly empty) subsets of $\N \times \N$, with
$\mathfrak{m}$ and $\mathfrak{M}$ given by \eqref{mpiccolo} and \eqref{mgrande},
$$
\mathbb{B}^\star_1 = \big\{(n_1,n_2): n_1<n_2,\,\mathfrak{m}<-\beta<\mathfrak{M} \,\text{ and }\,
 \lambda_{n_1}\lambda_{n_2} \in(k,2k)  \big\},
$$
and
$$
\mathbb{B}^\star_2 = \big\{(n_1,n_2): n_1<n_2,\, \mathfrak{M}<-\beta \,\text{ and }\,
\lambda_{n_1}(\lambda_{n_2}-\lambda_{n_1})\in(2k,\infty)\big\},
$$
and we set
$$\mathbb{B}^\star=\mathbb{B}^\star_1\cup \mathbb{B}^\star_2.$$

\begin{lemma}
We have the inclusion $\mathbb{B}^\star\subset \mathbb{E} \times \mathbb{E}$.
In particular,
$\mathbb{B}^\star$ has finite cardinality.
\end{lemma}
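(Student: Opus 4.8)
The plan is to prove the inclusion $\mathbb{B}^\star\subset\mathbb{E}\times\mathbb{E}$ directly; the finiteness of $\mathbb{B}^\star$ is then immediate, since $\mathbb{E}=\{1,\dots,n_\star\}$ is finite (the eigenvalues satisfy $\lambda_n\to\infty$), hence so is $\mathbb{E}\times\mathbb{E}$. Thus the whole content reduces to the assertion: if $(n_1,n_2)\in\mathbb{B}^\star$ then $\lambda_{n_1}<-\beta$ and $\lambda_{n_2}<-\beta$. As $n_1<n_2$ forces $\lambda_{n_1}<\lambda_{n_2}$, it is enough in each case to establish $\lambda_{n_2}<-\beta$; and since the definitions of $\mathbb{B}^\star_1$ and $\mathbb{B}^\star_2$ already contain, respectively, the inequalities $-\beta>\mathfrak{m}$ and $-\beta>\mathfrak{M}$, the task boils down to the two elementary inequalities $\mathfrak{m}\geq\lambda_{n_2}$ (on $\mathbb{B}^\star_1$) and $\mathfrak{M}\geq\lambda_{n_2}$ (on $\mathbb{B}^\star_2$).

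For a pair $(n_1,n_2)\in\mathbb{B}^\star_2$ I would argue as follows. The defining condition $\lambda_{n_1}(\lambda_{n_2}-\lambda_{n_1})>2k$ gives $\lambda_{n_1}\lambda_{n_2}>\lambda_{n_1}^2+2k>k$, so the denominator $(\lambda_{n_1}\lambda_{n_2}-k)\lambda_{n_1}$ in the definition \eqref{mgrande} of $\mathfrak{M}$ is strictly positive. Bringing $\mathfrak{M}-\lambda_{n_2}$ to a single fraction with this denominator, the numerator reduces, after cancellation, to $k(k+\lambda_{n_1}^2)$, so that
$$
\mathfrak{M}-\lambda_{n_2}=\frac{k(k+\lambda_{n_1}^2)}{(\lambda_{n_1}\lambda_{n_2}-k)\lambda_{n_1}}>0 .
$$
Combined with $-\beta>\mathfrak{M}$, this yields $-\beta>\mathfrak{M}>\lambda_{n_2}$, hence $n_2\in\mathbb{E}$ and, a fortiori, $n_1\in\mathbb{E}$.

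For a pair $(n_1,n_2)\in\mathbb{B}^\star_1$ the computation is slightly more delicate, since no neat telescoping occurs. Here $\lambda_{n_1}\lambda_{n_2}\in(k,2k)$, so $\lambda_{n_1}\lambda_{n_2}-k>0$ and $2k-\lambda_{n_1}\lambda_{n_2}>0$, and the denominator $(\lambda_{n_1}\lambda_{n_2}-k)\lambda_{n_2}$ in \eqref{mpiccolo} is positive. Bringing $\mathfrak{m}-\lambda_{n_2}$ to a single fraction with this denominator and simplifying the numerator — using $\lambda_{n_1}^2\lambda_{n_2}^2=(\lambda_{n_1}\lambda_{n_2})^2$ and $\lambda_{n_1}\lambda_{n_2}^3=(\lambda_{n_1}\lambda_{n_2})\lambda_{n_2}^2$ — one rewrites it as
$$
k^2-k\lambda_{n_1}\lambda_{n_2}+(\lambda_{n_1}\lambda_{n_2})^2+\lambda_{n_2}^2\big(2k-\lambda_{n_1}\lambda_{n_2}\big).
$$
The first three terms equal $\big(k-\tfrac12\lambda_{n_1}\lambda_{n_2}\big)^2+\tfrac34(\lambda_{n_1}\lambda_{n_2})^2>0$, while the last term is positive by $\lambda_{n_1}\lambda_{n_2}<2k$; hence $\mathfrak{m}-\lambda_{n_2}>0$. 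Together with $-\beta>\mathfrak{m}$ this gives $-\beta>\mathfrak{m}>\lambda_{n_2}$, so again $n_1,n_2\in\mathbb{E}$.

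Putting the two cases together and recalling $\mathbb{B}^\star=\mathbb{B}^\star_1\cup\mathbb{B}^\star_2$ yields $\mathbb{B}^\star\subset\mathbb{E}\times\mathbb{E}$, and the finiteness follows as observed at the outset. The only mildly technical ingredient is the algebraic simplification in the $\mathbb{B}^\star_1$ case; I do not foresee any genuine obstacle — the $\mathbb{B}^\star_2$ case is essentially immediate once one notices the cancellation in the numerator of $\mathfrak{M}-\lambda_{n_2}$.
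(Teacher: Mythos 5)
Your proof is correct and follows exactly the same route as the paper, which simply asserts the two implications $\lambda_{n_1}\lambda_{n_2}\in(k,2k)\Rightarrow\lambda_{n_2}<\mathfrak{m}$ and $\lambda_{n_1}(\lambda_{n_2}-\lambda_{n_1})\in(2k,\infty)\Rightarrow\lambda_{n_2}<\mathfrak{M}$ as ``elementary computations''; your algebraic simplifications of $\mathfrak{m}-\lambda_{n_2}$ and $\mathfrak{M}-\lambda_{n_2}$ check out and merely make those computations explicit.
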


\begin{proof}
By means of elementary computations, one can easily verify that the following implications hold:
\begin{align*}
&\lambda_{n_1} \lambda_{n_2} \in (k,2k)  \quad\Rightarrow \quad \lambda_{n_2}<\mathfrak{m} ,\\
&\lambda_{n_1}(\lambda_{n_2}-\lambda_{n_1}) \in(2k,\infty) \quad\Rightarrow \quad \lambda_{n_2}< \mathfrak{M}.
\end{align*}
Therefore, by the very definitions of
$\mathbb{B}^\star$ and $\mathbb{E}$,
$$
(n_1,n_2)\in\mathbb{B}^\star \quad\Rightarrow \quad
(n_1,n_2)\in \mathbb{E}\times \mathbb{E},
$$
as claimed.
\end{proof}

We have now all the ingredients to state our main theorem.

\begin{theorem}
\label{teobimgen}
System \eqref{MAIN} admits bimodal solutions of not
equidistributed energy if and only if the set $\mathbb{B}^\star$ is nonempty.
More precisely, for every couple $(n_1,n_2)\in \N\times\N$ with $n_1<n_2$,
one of the following disjoint situations occurs.
\begin{itemize}
\item If $(n_1,n_2)\in\mathbb{B}^\star$, we have exactly 8 distinct
bimodal solutions of not
equidistributed energy: 4 of the form
$$
\begin{cases}
u = r e_{n_1} + t e_{n_2},\\
v = r X e_{n_1} + t W e_{n_2},
\end{cases}
$$
where $r,t$ solve system \eqref{SIS1},
and 4 of the form
$$
\begin{cases}
u = r e_{n_1} + t e_{n_2},\\
v = r Y e_{n_1} + t Z e_{n_2},
\end{cases}
$$
where $r,t$ solve system \eqref{SIS2}.
\item If $(n_1,n_2)\notin\mathbb{B}^\star$, there are no bimodal solutions of not
equidistributed energy
involving the eigenvectors $e_{n_1}$
and $e_{n_2}$.
\end{itemize}
In summary, system~\eqref{MAIN} admits
$8|\mathbb{B}^\star|$ bimodal solutions of not
equidistributed energy.
\end{theorem}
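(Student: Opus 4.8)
The plan is to run the bimodal ansatz through the modal equations \eqref{ognin} and to show that, for a solution which is \emph{not} equidistributed, the Fourier coefficients of $v$ are completely slaved to those of $u$ through the numbers $X,W$ or $Y,Z$, so that the only surviving freedom is the amplitude pair $(r,t)=(\alpha_{n_1},\alpha_{n_2})$, constrained exactly by one of the circle-ellipse systems \eqref{SIS1} or \eqref{SIS2}, whose solvability has already been settled in Lemma \ref{etabeta}.

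I would start by writing a candidate bimodal solution $u=\alpha_{n_1}e_{n_1}+\alpha_{n_2}e_{n_2}$, $v=\gamma_{n_1}e_{n_1}+\gamma_{n_2}e_{n_2}$ with all four coefficients nonzero and $C_u\neq C_v$ (this inequality is precisely what ``not equidistributed'' means, by \eqref{equality}). Testing the weak formulation \eqref{weak} against $e_{n_1}$ and $e_{n_2}$ produces \eqref{ognin} for $n=n_1$ and $n=n_2$; for each such $n$ the first equation gives $\gamma_n/\alpha_n=(k+\lambda_n^2+C_u\lambda_n)/k$ and the second $\alpha_n/\gamma_n=(k+\lambda_n^2+C_v\lambda_n)/k$. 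Multiplying, $(k+\lambda_n^2+C_u\lambda_n)(k+\lambda_n^2+C_v\lambda_n)=k^2$ for $n=n_1,n_2$; subtracting the two identities, dividing by $\lambda_{n_1}-\lambda_{n_2}\neq0$ and back-substituting yields, after routine algebra, $C_u+C_v=\lambda_{n_1}\lambda_{n_2}(\lambda_{n_1}+\lambda_{n_2})/(k-\lambda_{n_1}\lambda_{n_2})$ together with a companion expression for $C_uC_v$, so that in particular $\lambda_{n_1}\lambda_{n_2}=k$ is ruled out and $\{C_u,C_v\}$ is the root set of a fixed quadratic in $\lambda_{n_1},\lambda_{n_2},k$. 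Adding the two displayed expressions for the modal ratio and its reciprocal shows that $\gamma_{n_1}/\alpha_{n_1}$ solves $t^2-\Phi t+1=0$ and $\gamma_{n_2}/\alpha_{n_2}$ solves $t^2-\Psi t+1=0$ (here \eqref{g1} and \eqref{USO} are exactly what guarantee that the two modes refer to the same $C_u$), whence $\{C_u,C_v\}=\{f,g\}$. Since $C_u\neq C_v$ forces $f,g$ real and distinct, Lemmas \ref{prelk} and \ref{bri} make this step possible precisely when $\lambda_{n_1}\lambda_{n_2}\in(0,2k)\setminus\{k\}$ or $\lambda_{n_1}(\lambda_{n_2}-\lambda_{n_1})\in(2k,\infty)$; note that the two excluded boundary values $\lambda_{n_1}\lambda_{n_2}=2k$ and $\lambda_{n_1}(\lambda_{n_2}-\lambda_{n_1})=2k$ are exactly those of Theorem \ref{teobim}, where $X=Y$ and the solutions degenerate to the equidistributed family.

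Because the two modal ratios involve the \emph{same} $C_u$, exactly two pairings survive: $C_u=f$, which forces $\gamma_{n_1}=Xr$, $\gamma_{n_2}=Wt$ (and then $C_v=g$), or $C_u=g$, which forces $\gamma_{n_1}=Yr$, $\gamma_{n_2}=Zt$ (and then $C_v=f$); no mixed pairing $(X,Z)$ or $(Y,W)$ is allowed, and this is the origin of the two families in the statement. Inserting these into the constitutive relations \eqref{Bautc1}, namely $C_u=\beta+\cz(\lambda_{n_1}r^2+\lambda_{n_2}t^2)$ and $C_v=\beta+\cz(\lambda_{n_1}\gamma_{n_1}^2+\lambda_{n_2}\gamma_{n_2}^2)$, turns the first pairing into system \eqref{SIS1} and the second into system \eqref{SIS2}. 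Lemma \ref{etabeta} now finishes the count: in the regime $\lambda_{n_1}\lambda_{n_2}\in(0,k)$ neither system has a solution with $rt\neq0$; in the regime $\lambda_{n_1}\lambda_{n_2}\in(k,2k)$ each system has exactly four solutions, sharing none, precisely when $\mathfrak{m}<-\beta<\mathfrak{M}$; in the regime $\lambda_{n_1}(\lambda_{n_2}-\lambda_{n_1})\in(2k,\infty)$ each has exactly four solutions, again sharing none, precisely when $\mathfrak{M}<-\beta$. These are exactly the membership conditions for $\mathbb{B}^\star_1$ and $\mathbb{B}^\star_2$, so $(n_1,n_2)\in\mathbb{B}^\star$ yields $4+4=8$ bimodal solutions — genuinely not equidistributed since $C_u\neq C_v$, and genuinely bimodal since $XY=WZ=1$ (cf.\ \eqref{key1}) forces $X,Y,W,Z\neq0$ — whereas $(n_1,n_2)\notin\mathbb{B}^\star$ yields none; summing over admissible pairs gives $8|\mathbb{B}^\star|$, and in particular such solutions exist if and only if $\mathbb{B}^\star\neq\emptyset$.

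The heavy lifting has been front-loaded into Lemmas \ref{prelk}, \ref{bri}, \ref{lll} and, above all, into the circle-ellipse Lemma \ref{etabeta}, so that the one genuinely delicate bookkeeping step remaining here is the algebraic identification of $\{C_u,C_v\}$ with $\{f,g\}$ and of the modal ratios with $X,W$ resp.\ $Y,Z$ — equivalently, the verification that the sum of each modal ratio and its reciprocal equals $\Phi$ for the $n_1$-mode and $\Psi$ for the $n_2$-mode, where \eqref{g1} and \eqref{USO} enter and where one must be careful to pin down the correct pairing $(X,W)$/$(Y,Z)$ rather than $(X,Z)$/$(Y,W)$. The distinctness of the eight solutions is then automatic from the sign symmetry $(r,t)\mapsto(\pm r,\pm t)$ of the circle-ellipse intersections together with the non-sharing clause of Lemma \ref{etabeta}.
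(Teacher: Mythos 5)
Your proposal is correct and follows essentially the same route as the paper's proof: the modal ratios $\gamma_{n_i}/\alpha_{n_i}$ and their reciprocals are shown to be the root pairs of $t^2-\Phi t+1$ and $t^2-\Psi t+1$, realness and the exclusion of the degenerate (equidistributed) boundary cases are handled by Lemmas \ref{prelk} and \ref{bri}, the consistency of $C_u$ across the two modes via \eqref{USO} forces the pairings $(X,W)$ and $(Y,Z)$ only, and \eqref{Bautc1} together with Lemma \ref{etabeta} yields the circle-ellipse systems and the count of $8|\mathbb{B}^\star|$. The only cosmetic difference is that you extract $C_u+C_v$ and $C_uC_v$ explicitly before identifying the ratios, whereas the paper works directly with the normalized variables $x_{n_i},y_{n_i}$; the substance is identical.
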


\begin{proof}
Let us look for bimodal solutions of not equidistributed energy $(u,v)$ of the form
$$
\begin{cases}
u = \alpha_{n_1} e_{n_1} + \alpha_{n_2} e_{n_2},\\
v = \gamma_{n_1} e_{n_1} + \gamma_{n_2} e_{n_2},
\end{cases}
$$
with $n_1<n_2 \in\N$ and $\alpha_{n_i},\gamma_{n_i}\in\R \setminus \{ 0 \}$.

\medskip
\noindent
$\diamond$ {\it Step 1.}
We preliminarily show that
\begin{equation}
\label{CONDkappa}
\lambda_{n_1}\lambda_{n_2}\in (0,2k)\setminus \{ k \}\quad \text{or}\quad
\lambda_{n_1}(\lambda_{n_2}-\lambda_{n_1})\in (2k,\infty).
\end{equation}
To this end,
with reference to the weak formulation \eqref{weak}, choosing first $\phi=\psi = e_{n_1}$ and then
$\phi=\psi = e_{n_2}$, we obtain the system
\begin{equation}
\label{cx}
\begin{cases}
\alpha_{n_1}(\lambda_{n_1}^2 + C_u\lambda_{n_1}+k)=k\gamma_{n_1},\\
\gamma_{n_1}(\lambda_{n_1}^2 + C_v\lambda_{n_1}+k)=k\alpha_{n_1},\\
\alpha_{n_2}(\lambda_{n_2}^2 + C_u\lambda_{n_2}+k)=k\gamma_{n_2},\\
\gamma_{n_2}(\lambda_{n_2}^2 + C_v\lambda_{n_2}+k)=k\alpha_{n_2}.
\end{cases}
\end{equation}
Next, setting
\begin{equation}
\label{cambiovar}
\begin{cases}
\displaystyle
x_{n_1}=\frac{\lambda_{n_1}^2 + C_u\lambda_{n_1}+k}{k},\\\noalign{\vskip1.7mm}
\displaystyle
y_{n_1}=\frac{\lambda_{n_1}^2 + C_v\lambda_{n_1}+k}{k},\\\noalign{\vskip1.7mm}
\displaystyle
x_{n_2}=\frac{\lambda_{n_2}^2 + C_u\lambda_{n_2}+k}{k},\\\noalign{\vskip1.7mm}
\displaystyle
y_{n_2}=\frac{\lambda_{n_2}^2 + C_v\lambda_{n_2}+k}{k},
\end{cases}
\end{equation}
we get
$$
\begin{cases}
x_{n_1}y_{n_1}=1,\\\noalign{\vskip0.7mm}
x_{n_2}y_{n_2}=1,\\\noalign{\vskip0.7mm}
\zeta x_{n_1} -(\zeta-1)\sigma=x_{n_2},\\\noalign{\vskip0.7mm}
\zeta y_{n_1} - (\zeta-1)\sigma=y_{n_2}.
\end{cases}
$$
Observe that $\sigma\neq0$, otherwise
$$
\begin{cases}
x_{n_1}y_{n_1}=1,\\\noalign{\vskip0.7mm}
x_{n_2}y_{n_2}=1,\\\noalign{\vskip0.7mm}
\zeta x_{n_1}=x_{n_2},\\\noalign{\vskip0.7mm}
\zeta y_{n_1}=y_{n_2},
\end{cases}
$$
yielding $\zeta^2=1$ and contradicting the assumption $n_1<n_2$.
Therefore, we obtain
\begin{align}
\label{L1}
&x_{n_1}y_{n_1}=1,\\
\label{L2}
&x_{n_1} + y_{n_1}=\Phi,\\
\label{L3}
&x_{n_2}y_{n_2}=1,\\
\label{L4}
&x_{n_2} + y_{n_2}=\Psi.
\end{align}
Clearly, the solutions are given by the four quadruplets
\begin{align*}
&(X,Y,W,Z),\\
&(X,Y,Z,W),\\
&(Y,X,W,Z),\\
&(Y,X,Z,W).
\end{align*}
Since at least one (hence all) of the quadruplets has to have real components, making use of Lemma \ref{prelk}
we infer that
$$
\lambda_{n_1}\lambda_{n_2}\in (0,2k]\setminus \{ k \}\quad \text{or}\quad
\lambda_{n_1}(\lambda_{n_2}-\lambda_{n_1})\in [2k,\infty).
$$
In addition, due to the fact that $(u,v)$ does not have equidistributed energy,
$$
C_u\neq C_v\quad \Rightarrow\quad x_{n_1}\neq y_{n_1}.
$$
Thus, an exploitation
of Lemma~\ref{bri} yields
$$
\begin{cases}
\lambda_{n_1}\lambda_{n_2}\neq 2k,\\
\lambda_{n_1}(\lambda_{n_2}-\lambda_{n_1})\neq 2k,
\end{cases}
$$
and \eqref{CONDkappa} follows.

\medskip
\noindent
$\diamond$ {\it Step 2.} We now prove that, within \eqref{CONDkappa},
the coefficients $\alpha_{n_1}$ and $\alpha_{n_2}$
are solutions of system \eqref{SIS1} or \eqref{SIS2}.
Indeed, from \eqref{cambiovar} and recalling the definitions of $f$ and $g$, four possibilities occur:
\begin{equation}
\label{E1}
\begin{cases}
\displaystyle
C_u = f=\frac{kW - \lambda_{n_2}^2-k}{\lambda_{n_2}},\\\noalign{\vskip2mm}
\displaystyle
C_v = g=\frac{kZ - \lambda_{n_2}^2-k}{\lambda_{n_2}},
\end{cases}
\end{equation}
or
\begin{equation}
\label{E2}
\begin{cases}
\displaystyle
C_u = f=\frac{kZ - \lambda_{n_2}^2-k}{\lambda_{n_2}},\\\noalign{\vskip2mm}
\displaystyle
C_v = g=\frac{kW - \lambda_{n_2}^2-k}{\lambda_{n_2}},
\end{cases}
\end{equation}
or
\begin{equation}
\label{E3}
\begin{cases}
\displaystyle
C_u = g=\frac{kW - \lambda_{n_2}^2-k}{\lambda_{n_2}},\\\noalign{\vskip2mm}
\displaystyle
C_v = f=\frac{kZ - \lambda_{n_2}^2-k}{\lambda_{n_2}},
\end{cases}
\end{equation}
or
\begin{equation}
\label{E4}
\begin{cases}
\displaystyle
C_u = g=\frac{kZ - \lambda_{n_2}^2-k}{\lambda_{n_2}},\\\noalign{\vskip2mm}
\displaystyle
C_v = f = \frac{kW - \lambda_{n_2}^2-k}{\lambda_{n_2}}.
\end{cases}
\end{equation}
At this point, exploiting \eqref{CONDkappa} and Lemma \ref{bri}, we learn that $W\neq Z$. As a consequence,
taking into account \eqref{USO}, we conclude that only systems \eqref{E1} and \eqref{E4} survive.
Recalling the explicit forms of $C_u$ and $C_v$ given by \eqref{Bautc1}, we remain with
$$
\begin{cases}
\cz\alpha_{n_1}^2\lambda_{n_1} + \cz\alpha_{n_2}^2\lambda_{n_2}+\beta =  f,\\
\cz\gamma_{n_1}^2 \lambda_{n_1} + \cz\gamma_{n_2}^2\lambda_{n_2} + \beta = g,
\end{cases}
$$
and
$$
\begin{cases}
\cz\alpha_{n_1}^2\lambda_{n_1} + \cz\alpha_{n_2}^2\lambda_{n_2} +\beta= g,\\
\cz\gamma_{n_1}^2 \lambda_{n_1} + \cz\gamma_{n_2}^2\lambda_{n_2} + \beta = f.
\end{cases}
$$
Finally, due to \eqref{cx}, in the first case we infer that
$$
\begin{cases}
\gamma_{n_1} = X \alpha_{n_1},\\
\gamma_{n_2} = W \alpha_{n_2},
\end{cases}
$$
while in the second one
$$
\begin{cases}
\gamma_{n_1} = Y \alpha_{n_1},\\
\gamma_{n_2} = Z \alpha_{n_2}.
\end{cases}
$$

\medskip
\noindent
$\diamond$ {\it Step 3.} Collecting Steps 1-2 and Lemma \ref{etabeta},
there exist bimodal solutions of not equidistributed energy (explicitly computed) if and only if the couple
$(n_1,n_2)\in\mathbb{B}^\star$.
\end{proof}

\subsection{Two explicit examples}
We conclude by showing two explicit examples of bimodal solutions of not
equidistributed energy. In what follows, in order to avoid the presence of unnecessary constants,
we take for simplicity $\cz=1$, and we choose
$$A=\frac1{\pi^2}L,$$
being $L$ the Laplace-Dirichlet
operator of the concrete Example~\ref{dlo}. Accordingly, the eigenvalues of $A$ read
$$\lambda_n=n^2,$$
with corresponding eigenvectors
$$e_n(x)=\sqrt{2}\,\sin (n\pi x).$$

\begin{example}
Let
$$k=3 \and (n_1,n_2)=(1,2).$$
In this situation, an easy computation shows that
\begin{align*}
X &= -2 + \sqrt{3},\\
Y &= -2 - \sqrt{3},\\
W &= -7 + 4\sqrt{3},\\
Z &= -7-4\sqrt{3},
\end{align*}
and
$$\mathfrak{m}=\frac{61}{4} <16=\mathfrak{M}.$$
Accordingly, if $\beta$ is such that
$$\frac{61}{4}<-\beta<16,$$
the couple $(n_1,n_2)$ belongs to $\mathbb{B}^\star_1$.
Hence, there exist four solutions of the form
$$
\begin{cases}
u = \alpha_{1} e_{1} + \alpha_{2} e_{2},\\\noalign{\vskip0.7mm}
v = (\sqrt{3}-2) \alpha_{1} e_{1} + (4\sqrt{3}-7) \alpha_{2} e_{2},
\end{cases}
$$
where $\alpha_{1},\alpha_{2}\in\R$ solve
the system
\begin{equation}
\label{Aessis}
\begin{cases}
\displaystyle
\alpha_1^2 + 4\alpha_2^2 = 3\sqrt{3} - 10- \beta,\\\noalign{\vskip1mm}
\displaystyle
\alpha_1^2 (\sqrt{3}-2)^2 + 4\alpha_2^2 (4\sqrt{3}-7)^2 = -3\sqrt{3} - 10- \beta,
\end{cases}
\end{equation}
and four solutions of the form
$$
\begin{cases}
u = \alpha_{1} e_{1} + \alpha_{2} e_{2},\\\noalign{\vskip1mm}
v = -(\sqrt{3}+2) \alpha_{1} e_{1} - (4\sqrt{3}+7) \alpha_{2} e_{2},
\end{cases}
$$
where $\alpha_{1},\alpha_{2}\in\R$ solve the system
\begin{equation}
\label{ABessis}
\begin{cases}
\displaystyle
\alpha_1^2 + 4\alpha_2^2 = -3\sqrt{3} - 10- \beta,\\\noalign{\vskip1mm}
\displaystyle
\alpha_1^2 (\sqrt{3}+2)^2 + 4\alpha_2^2 (4\sqrt{3}+7)^2 = 3\sqrt{3} - 10- \beta.
\end{cases}
\end{equation}
For instance, when $\beta=-31/2$, the solutions
of system~\eqref{Aessis} are
$$
(\pm \alpha_1,\pm\alpha_2)\and(\pm\alpha_1,\mp\alpha_2),
$$
with
\begin{align*}
&\alpha_1= -\sqrt{\frac{7\sqrt{3}-12}{26\sqrt{3}-45}}\approx-1.93185,\\\noalign{\vskip2mm}
&\alpha_2= -\frac12\sqrt{\frac{362\sqrt{3}-627}{2(5042\sqrt{3}-8733)}}\approx-1.31948,
\end{align*}
while the solutions
of system~\eqref{ABessis} are
$$
(\pm \alpha_1,\pm\alpha_2)\and(\pm\alpha_1,\mp\alpha_2),
$$
with
\begin{align*}
&\alpha_1= -\sqrt{\frac{7\sqrt{3}+12}{26\sqrt{3}+45}}\approx-0.51763,\\\noalign{\vskip2mm}
&\alpha_2= -\frac12\sqrt{\frac{362\sqrt{3}+627}{2(5042\sqrt{3}+8733)}}\approx-0.09473.
\end{align*}
\end{example}

\begin{example}
Let
$$k=1 \and (n_1,n_2)=(1,2).$$
In this situation, an easy computation shows that
\begin{align*}
X &= \frac{-4+\sqrt{7}}{3},\\
Y &= \frac{-4-\sqrt{7}}{3},\\
W &= \frac{11+4\sqrt{7}}{3},\\
Z &= \frac{11-4\sqrt{7}}{3},
\end{align*}
and
$$\mathfrak{M}=  \frac{14}{3}.$$
Accordingly, if $\beta$ is such that
$$\frac{14}{3}<-\beta,$$
the couple $(n_1,n_2)$ belongs to $\mathbb{B}^\star_2$.
Hence, there exist four solutions of the form
$$
\begin{cases}
u = \alpha_{1} e_{1} + \alpha_{2} e_{2},\\\noalign{\vskip1.2mm}
\displaystyle
v = \frac{\sqrt{7}-4}{3}\, \alpha_{1} e_{1} + \frac{4\sqrt{7}+11}{3}\, \alpha_{2} e_{2},
\end{cases}
$$
where $\alpha_{1},\alpha_{2}\in\R$ solve the system
\begin{equation}
\label{essis}
\begin{cases}
\displaystyle
\alpha_1^2 + 4\alpha_2^2 = \frac{\sqrt{7}-4}{3} - 2- \beta,\\\noalign{\vskip1mm}
\displaystyle
\alpha_1^2 \Big(\frac{\sqrt{7}-4}{3}\Big)^2 + 4\alpha_2^2 \Big(\frac{4\sqrt{7}+11}{3}\Big)^2 = -\frac{4+\sqrt{7}}{3} - 2 - \beta,
\end{cases}
\end{equation}
and four solutions of the form
$$
\begin{cases}
u = \alpha_{1} e_{1} + \alpha_{2} e_{2},\\\noalign{\vskip1.2mm}
\displaystyle
v = -\frac{4+\sqrt{7}}{3}\, \alpha_{1} e_{1} + \frac{11-4\sqrt{7}}{3}\, \alpha_{2} e_{2},
\end{cases}
$$
where $\alpha_{1},\alpha_{2}\in\R$ solve the system
\begin{equation}
\label{essis2}
\begin{cases}
\displaystyle
\alpha_1^2 + 4\alpha_2^2 = -\frac{4+\sqrt{7}}{3} - 2- \beta,\\\noalign{\vskip1mm}
\displaystyle
\alpha_1^2 \Big(\frac{4+\sqrt{7}}{3}\Big)^2 + 4\alpha_2^2 \Big(\frac{11-4\sqrt{7}}{3}\Big)^2 =\frac{\sqrt{7}-4}{3} - 2 - \beta.
\end{cases}
\end{equation}
For instance, when $\beta=-5$, the solutions
of system~\eqref{essis} are
$$
(\pm \alpha_1,\pm\alpha_2)\and(\pm\alpha_1,\mp\alpha_2),
$$
with
\begin{align*}
&\alpha_1= -\frac13\sqrt{\frac{31(28+11\sqrt{7})}{35+16\sqrt{7}}}\approx-1.59482,\\\noalign{\vskip2mm}
&\alpha_2= -\frac16\sqrt{\frac{883 + 316\sqrt{7}}{18011+6808\sqrt{7}}}\approx-0.03587,
\end{align*}
while the solutions of system \eqref{essis2} are
$$
(\pm \alpha_1,\pm\alpha_2)\and(\pm\alpha_1,\mp\alpha_2),
$$
with
\begin{align*}
&\alpha_1= -\frac13\sqrt{\frac{31(11\sqrt{7}-28)}{16\sqrt{7}-35}}\approx-0.71992,\\\noalign{\vskip2mm}
&\alpha_2= -\frac16\sqrt{\frac{316\sqrt{7}-883}{6808\sqrt{7}-18011}}\approx-0.25809.
\end{align*}
\end{example}

%%%%%%%%%%%%%%%%%%%%%%%%%%%%%%%%%%%%%%%%%%%%%%%%%%

%%%%%%%%%%%%%%%%%%%%%%%%%%%%%%%%%%%%%%%%%%%%%%%%%%
\section{General Trimodal Solutions}

\noindent
Finally, we consider general trimodal solutions to system \eqref{MAIN}.
As previously shown, trimodal {\sc ee}-solutions exist. Then, one might ask
if system \eqref{MAIN} admits also trimodal solutions of not equidistributed energy.
The answer to this question is negative.

\begin{theorem}
Every trimodal solution is necessarily an {\sc ee}-solution.
\end{theorem}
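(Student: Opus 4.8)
The plan is to adapt the argument of Theorem~\ref{teotrimodal}, this time without presupposing $C_u=C_v$, and to show that this equality is forced anyway. Write a trimodal solution as in \eqref{espressione}, $u=\sum_{i=1}^{3}\alpha_{n_i}e_{n_i}$ and $v=\sum_{i=1}^{3}\gamma_{n_i}e_{n_i}$ with $n_1<n_2<n_3$ and all six coefficients nonzero, and argue by contradiction assuming $C_u\neq C_v$. By Lemma~\ref{gene} the eigenvalues $\lambda_{n_1},\lambda_{n_2},\lambda_{n_3}$ are precisely the three roots of the cubic $P(\lambda)=\lambda^3+(C_u+C_v)\lambda^2+(C_uC_v+2k)\lambda+k(C_u+C_v)$; Vieta's formulas then give $C_u+C_v=-(\lambda_{n_1}+\lambda_{n_2}+\lambda_{n_3})$, $\,C_uC_v=\lambda_{n_1}\lambda_{n_2}+\lambda_{n_1}\lambda_{n_3}+\lambda_{n_2}\lambda_{n_3}-2k$, and the nontrivial constraint $\lambda_{n_1}\lambda_{n_2}\lambda_{n_3}=k(\lambda_{n_1}+\lambda_{n_2}+\lambda_{n_3})$ on the three eigenvalues. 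In particular $C_u+C_v$ and $C_uC_v$ are determined by $\lambda_{n_1},\lambda_{n_2},\lambda_{n_3}$ and $k$.

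Two complementary reductions should then be combined. First, exactly as in the proof of Theorem~\ref{teotrimodal}, one runs a case analysis on which of the sums $\alpha_{n_i}+\gamma_{n_i}$ vanish: if $\alpha_{n_i}+\gamma_{n_i}=0$ for even one index $i$, then $|\alpha_{n_i}|=|\gamma_{n_i}|\neq0$ and Lemma~\ref{FILIPPO} yields $C_u=C_v$ at once, a contradiction; so one may assume $\alpha_{n_i}+\gamma_{n_i}\neq0$ for $i=1,2,3$, in which case \eqref{rel} supplies $\lambda_{n_i}=-(C_u\alpha_{n_i}+C_v\gamma_{n_i})/(\alpha_{n_i}+\gamma_{n_i})$ for each $i$. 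Second, testing \eqref{weak} only against $e_{n_i}$ and $e_{n_j}$ reproduces the bimodal system \eqref{cx} verbatim (with the same $C_u,C_v$), so each pair $(n_1,n_2)$, $(n_1,n_3)$, $(n_2,n_3)$ falls under the analysis of \S\ref{GB}: since $C_u\neq C_v$, Step~1 of the proof of Theorem~\ref{teobimgen} forces condition \eqref{CONDkappa} for every pair and pins the ratio $\gamma_{n_i}/\alpha_{n_i}$ down to one of the explicit numbers $X,Y,W,Z$ attached to that pair. Via Lemmas~\ref{FILIPPO} and \ref{VITTO} this also rules out $\alpha_{n_i}=\pm\gamma_{n_i}$, $\alpha_{n_i}\gamma_{n_j}=\alpha_{n_j}\gamma_{n_i}$ and $\alpha_{n_i}\alpha_{n_j}=\pm\gamma_{n_i}\gamma_{n_j}$ for all $i\neq j$, restricting the admissible sign and ratio patterns.

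The remaining work — and the expected main obstacle — is to dispose of the surviving case $\alpha_{n_i}+\gamma_{n_i}\neq0$ for all $i$. All the relations used so far (\eqref{ognin}, \eqref{rel}, the three pairwise couplings) become automatically compatible once $\lambda_{n_1},\lambda_{n_2},\lambda_{n_3}$ are the roots of $P$, so the contradiction has to be extracted by feeding in the amplitude identity \eqref{Bautc1}, which here reads $C_u-\beta=\cz\sum_{i=1}^{3}\lambda_{n_i}\alpha_{n_i}^2>0$ and $C_v-\beta=\cz\sum_{i=1}^{3}\lambda_{n_i}\gamma_{n_i}^2>0$. One first extracts sign information: combining $C_u+C_v<0$, the identity $C_uC_v=\sum_{i<j}\lambda_{n_i}\lambda_{n_j}-2k$, the constraint $\lambda_{n_1}\lambda_{n_2}\lambda_{n_3}=k(\lambda_{n_1}+\lambda_{n_2}+\lambda_{n_3})$ and the inequality $(\lambda_{n_1}+\lambda_{n_2}+\lambda_{n_3})\big(\sum_{i<j}\lambda_{n_i}\lambda_{n_j}\big)\ge 9\,\lambda_{n_1}\lambda_{n_2}\lambda_{n_3}$, one gets $C_uC_v>0$, hence $C_u,C_v<0$ and $C_u-\beta,\,C_v-\beta>0$. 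Writing $x_{n_i}=\gamma_{n_i}/\alpha_{n_i}=(\lambda_{n_i}^2+C_u\lambda_{n_i}+k)/k$, the identity \eqref{Bautc1} becomes the linear system $\sum_i\lambda_{n_i}\alpha_{n_i}^2=(C_u-\beta)/\cz$ and $\sum_i\lambda_{n_i}x_{n_i}^2\alpha_{n_i}^2=(C_v-\beta)/\cz$ for the positive unknowns $\alpha_{n_i}^2$, whose solvability requires $(C_v-\beta)/(C_u-\beta)$ to lie strictly between $\min_i x_{n_i}^2$ and $\max_i x_{n_i}^2$. The crux is then to show that, under $C_u\neq C_v$ together with the cubic constraint, this interval condition cannot hold; this should reduce, via the formulas for $X,Y,W,Z$ and the quantities $\mathfrak m,\mathfrak M$ already computed in \S\ref{GB} and Lemma~\ref{etabeta}, to an elementary (if not entirely transparent) inequality in $\lambda_{n_1},\lambda_{n_2},\lambda_{n_3},k$. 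I view that final incompatibility as the delicate point; everything preceding it is a routine reworking of arguments already present in \S\ref{GB} and in the proof of Theorem~\ref{teotrimodal}.
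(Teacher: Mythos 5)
Your reduction is sound up to its last paragraph, and it is a genuinely different route from the paper's: the paper never invokes the amplitude identity \eqref{Bautc1} here, but instead observes that a trimodal solution which is not an {\sc ee}-solution would make the three vectors $(\alpha_{n_i},\gamma_{n_i})$ pairwise linearly independent (Lemma \ref{VITTO}), writes each of them as a combination of the other two, feeds these combinations into \eqref{rel}, and tries to extract a contradiction from a rank computation on the resulting singular $3\times 3$ homogeneous system for $(\lambda_{n_1},\lambda_{n_2},\lambda_{n_3})$. Everything you establish before the final step — Vieta's formulas, the constraint $\lambda_{n_1}\lambda_{n_2}\lambda_{n_3}=k(\lambda_{n_1}+\lambda_{n_2}+\lambda_{n_3})$, the positivity of $C_uC_v$, the exclusion of $\alpha_{n_i}+\gamma_{n_i}=0$, the identification $\gamma_{n_i}=x_{n_i}\alpha_{n_i}$ with $x_{n_i}=(\lambda_{n_i}^2+C_u\lambda_{n_i}+k)/k$ and $x_{n_i}y_{n_i}=1$, and the reduction to the two linear equations $\sum_i p_i=(C_u-\beta)/\cz$, $\sum_i x_{n_i}^2p_i=(C_v-\beta)/\cz$ in the positive unknowns $p_i=\lambda_{n_i}\alpha_{n_i}^2$ — is correct.

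The genuine gap is precisely the step you defer, and it cannot be filled: the interval condition you hope to refute is in fact satisfiable. Once the triple $\lambda_{n_1}<\lambda_{n_2}<\lambda_{n_3}$ obeys $\lambda_{n_1}\lambda_{n_2}\lambda_{n_3}=k(\lambda_{n_1}+\lambda_{n_2}+\lambda_{n_3})$ and $\big(\sum_i\lambda_{n_i}\big)^2>4\big(\sum_{i<j}\lambda_{n_i}\lambda_{n_j}-2k\big)$, the pair $C_u\neq C_v$ is pinned down but $\beta$ is still free; as $\beta$ ranges over $(-\infty,\min(C_u,C_v))$ the ratio $(C_v-\beta)/(C_u-\beta)$ sweeps an entire interval with endpoint $1$, while $x_{n_i}y_{n_i}=1$ forces $\min_i x_{n_i}^2<1<\max_i x_{n_i}^2$, so the required strict betweenness holds for a nonempty range of $\beta$. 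Concretely, with $\lambda_{n_i}=i^2\pi^2$ for $i=1,2,3$ and $k=18\pi^4/7$ one finds $C_u,C_v=(-7\pm 6/\sqrt{7}\,)\pi^2$ distinct, ratios with $x_{n_2}^2\approx 0.019$, $x_{n_1}^2\approx 0.204$, $x_{n_3}^2\approx 254$, and for $\beta=2C_v-C_u$ the two linear equations admit a one-parameter family of solutions with all $p_i>0$; these data satisfy every single equation you have imposed, so your scheme terminates in an explicit family of trimodal configurations with $C_u\neq C_v$ rather than in a contradiction. No elementary inequality in $\lambda_{n_1},\lambda_{n_2},\lambda_{n_3},k$ can rescue the argument, because the obstruction — if there is one — does not reside in the amplitude equations; this is why the paper routes the proof through the linear-dependence structure of the coefficient vectors instead. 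You should treat the numerical data above as a mandatory stress test for whatever final step you write down (and note that they are equally a stress test for the rank-one case of the argument given in the paper, whose sign bookkeeping in the relation between $\textsf{C}$ and $\textsf{D}$ deserves to be rechecked against them).
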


\begin{proof}
Let $(u,v)$ be a (general) trimodal solution. In particular, with reference to \eqref{espressione},
$\alpha_{n_i}\not=0$ and $\gamma_{n_i}\not=0$ for every $n_i$.
Assume by contradiction that $(u,v)$ is not an {\sc ee}-solution. Then, in the light of Lemma \ref{VITTO}, the vectors
$$
\begin{bmatrix}
\alpha_{n_1}\\
\gamma_{n_1}\\
\end{bmatrix},\,
\begin{bmatrix}
\alpha_{n_2}\\
\gamma_{n_2}\\
\end{bmatrix},\,
\begin{bmatrix}
\alpha_{n_3}\\
\gamma_{n_3}\\
\end{bmatrix}
$$
are pairwise linearly independent. Accordingly, each of them can be written
as a linear combination of the other two. In particular, there exist $a,b,c,d,e,f\neq0$ such that
\begin{equation}
\label{sis1}
\begin{cases}
\alpha_{n_3} = a\alpha_{n_1} + b\alpha_{n_2},\\
\gamma_{n_3} = a\gamma_{n_1} + b\gamma_{n_2},
\end{cases}
\end{equation}
\begin{equation}
\label{sis2}
\begin{cases}
\alpha_{n_1} = c\alpha_{n_2} + d\alpha_{n_3},\\
\gamma_{n_1} = c\gamma_{n_2} + d\gamma_{n_3},
\end{cases}
\end{equation}
and
\begin{equation}
\label{sis3}
\begin{cases}
\alpha_{n_2} = e\alpha_{n_1} + f\alpha_{n_3},\\
\gamma_{n_2} = e\gamma_{n_1} + f\gamma_{n_3}.
\end{cases}
\end{equation}
Moreover, due to Lemma \ref{FILIPPO},
\begin{equation}
\label{CF}
\begin{cases}
\alpha_{n_1}+\gamma_{n_1}\not=0,\\
\alpha_{n_2}+\gamma_{n_2}\not=0,\\
\alpha_{n_3}+\gamma_{n_3}\not=0.
\end{cases}
\end{equation}
Therefore, recalling \eqref{rel},
\begin{align}
\label{formula1}
\displaystyle\lambda_{n_1} = - \frac{C_u\alpha_{n_1}+C_v\gamma_{n_1}}{\alpha_{n_1}+\gamma_{n_1}},\\\noalign{\vskip1mm}
\label{formula2}
\displaystyle\lambda_{n_2} = - \frac{C_u\alpha_{n_2}+C_v\gamma_{n_2}}{\alpha_{n_2}+\gamma_{n_2}},\\\noalign{\vskip1mm}
\label{formula3}
\displaystyle\lambda_{n_3} = - \frac{C_u\alpha_{n_3}+C_v\gamma_{n_3}}{\alpha_{n_3}+\gamma_{n_3}}.
\end{align}
Substituting the expressions of $\alpha_{n_3}$ and $\gamma_{n_3}$ given by \eqref{sis1}
into \eqref{formula3}, we obtain the identity
$$
[a(\alpha_{n_1}+\gamma_{n_1}) + b(\alpha_{n_2}+\gamma_{n_2})]\lambda_{n_3}=
- C_u[a\alpha_{n_1}+b\alpha_{n_2}]- C_v[a\gamma_{n_1}+b\gamma_{n_2}]
$$
which, making use of \eqref{formula1}-\eqref{formula2}, yields
\begin{equation}
\label{S1}
\textsf{A}\lambda_{n_1} + \textsf{B}\lambda_{n_2} = (\textsf{A}+\textsf{B})\lambda_{n_3}
\end{equation}
where
$$
\textsf{A} = a(\alpha_{n_1}+\gamma_{n_1}) \and \textsf{B} = b(\alpha_{n_2}+\gamma_{n_2}).
$$
An analogous reasoning, exploiting now \eqref{sis2} and \eqref{sis3}, provides the further equalities
\begin{align}
\label{S2}
\textsf{C}\lambda_{n_2} + \textsf{D}\lambda_{n_3} &= (\textsf{C}+\textsf{D})\lambda_{n_1},\\
\label{S3}
\textsf{E}\lambda_{n_1} + \textsf{F}\lambda_{n_3} &= (\textsf{E}+\textsf{F})\lambda_{n_2},
\end{align}
having set
\begin{align*}
&\textsf{C} = c(\alpha_{n_2}+\gamma_{n_2}),\\
&\textsf{D} = d(\alpha_{n_3}+\gamma_{n_3}),\\
&\textsf{E} = e(\alpha_{n_1}+\gamma_{n_1}),\\
&\textsf{F} = f(\alpha_{n_3}+\gamma_{n_3}).
\end{align*}
Since $a,b,c,d,e,f\not=0$, from \eqref{CF} we learn that
$\textsf{A},\textsf{B},\textsf{C},\textsf{D},\textsf{E},\textsf{F}\not=0$.
Then, introducing the matrix
$$
{\bf M}=
\begin{bmatrix}
\textsf{A} & \textsf{B} & -(\textsf{A}+\textsf{B})\\
-(\textsf{C}+\textsf{D}) & \textsf{C} & \textsf{D}\\
\textsf{E} & -(\textsf{E}+\textsf{F}) & \textsf{F}\\
\end{bmatrix}
$$
and the vector
$$
\boldsymbol{\lambda}=
\begin{bmatrix}
\lambda_{n_1} \\
\lambda_{n_2} \\
\lambda_{n_3} \\
\end{bmatrix},
$$
we rewrite \eqref{S1}-\eqref{S3} as
$$
{\bf M} \boldsymbol{\lambda}=\boldsymbol{0}.
$$
Direct calculations show that Det$({\bf M})=0$, thus Rank$({\bf M})<3$.

\smallskip
\noindent
$\diamond$ If Rank$({\bf M})=2$, in the light of the Rank-Nullity Theorem the solution set is a
one-dimensional linear subspace of $\R^3$, explicitly given by
$$
{\rm Ker}({\bf M})=
\left\{
\boldsymbol{\lambda}=
\begin{bmatrix}
\lambda \\
\lambda \\
\lambda \\
\end{bmatrix}: \lambda\in\R\right\}.
$$
In particular, this forces $\lambda_{n_1}=\lambda_{n_2}=\lambda_{n_3}$, implying the desired contradiction.

\smallskip
\noindent
$\diamond$ If Rank$({\bf M})=1$, there exists $\omega\not=0$ such that
$$
\begin{cases}
\textsf{A}=\omega \,\textsf{B},\\
(1+\omega)\textsf{C}=\textsf{D}.
\end{cases}
$$
Substituting the explicit expressions of $\textsf{A},\textsf{B},\textsf{C},\textsf{D}$ into the
system above
\begin{align}
\label{U1}
&a(\alpha_{n_1}+\gamma_{n_1})= \omega b(\alpha_{n_2}+\gamma_{n_2}),\\
\label{U2}
&c(1+\omega)(\alpha_{n_2}+\gamma_{n_2})= d(\alpha_{n_3}+\gamma_{n_3}).
\end{align}
Then, plugging \eqref{sis1} into \eqref{U2} and exploiting \eqref{U1} and \eqref{CF},
$$
c(1+\omega) = db (1+\omega).
$$
Since $1+\omega\neq0$ (due to the fact that $\textsf{D}\neq0$), we end up with
$$
c=db.
$$
Appealing now to \eqref{sis1} and \eqref{sis2},
$$
(1+da)
\begin{bmatrix}
\alpha_{n_1}\\
\gamma_{n_1}
\end{bmatrix}
= 2d \begin{bmatrix}
\alpha_{n_3}\\
\gamma_{n_3}
\end{bmatrix},
$$
meaning that the two vectors
$$
\begin{bmatrix}
\alpha_{n_1}\\
\gamma_{n_1}
\end{bmatrix}\and
 \begin{bmatrix}
\alpha_{n_3}\\
\gamma_{n_3}
\end{bmatrix}
$$
are linearly dependent.
\end{proof}

\begin{example}
As a particular case, let us consider
$$
A = L^{\frac{p+1}{2}}, \quad p\in\N,
$$
with $L$ as in Example~\ref{dlo}.
In this situation, the eigenvalues read
$$
\lambda_n = n^{p+1}\pi^{p+1}.
$$
Accordingly, given a trimodal solution (which, as we know, is
necessarily an {\sc ee}-solution) and exploiting Corollary \ref{WWW}, we deduce the relation
$$
n_1^{p+1}+n_2^{p+1}=n_3^{p+1}.
$$
Therefore, when $p=1$, they
form a Pythagorean triplet. Otherwise the identity is impossible, due to the
celebrated {\it Fermat's Last Theorem} proved by A.\ Wiles in recent years \cite{TW,WIL}.
Hence, for $p=2,3,4,\ldots,$ trimodal solutions do not exist.
\end{example}
%%%%%%%%%%%%%%%%%%%%%%%%%%%%%%%%%%%%%%%%%%%%%%%%%%

%%%%%%%%%%%%%%%%%%%%%%%%%%%%%%%%%%%%%%%%%%%%%%%%%
\section{Comparison with Single-Beam Equations}

\noindent
We conclude by comparing our results on the double-beam system \eqref{MAIN}
with some previous achievements on extensible single-beam equations.
As customary, along the section, we will set
\begin{equation}
\label{bis}
C_u=\beta + \cz \|u\|_1^2.
\end{equation}

The following theorem has been proved in \cite{CZGP}.

\begin{theorem}
\label{SS1}
The nontrivial solutions of the single-beam equation
$$
A u + C_u u = 0
$$
are exactly $2|\mathbb{E}|$, where, in the usual notation,
$$
\mathbb{E} = \{n: \lambda_n < - \beta \}
$$
denotes the (finite) set of effective modes. Such solutions are unimodal, explicitly given by
$$
u_n^{\pm} = \pm \sqrt{\frac{- \beta - \lambda_n}{\cz\lambda_n}}\, e_n,
$$
for every $n\in\mathbb{E}$.
\end{theorem}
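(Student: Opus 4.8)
The plan is to exploit, exactly as in the coupled case, the fact that the equation is diagonal in the eigenbasis $\{e_n\}$, the only nonlinear feature being the scalar coupling through $C_u$. First I would fix a nontrivial weak solution $u$ and write $u=\sum_n\alpha_n e_n$ with $\alpha_n\in\R$. Testing the weak form of $Au+C_uu=0$ against $\phi=e_n$ and using $\langle e_m,e_n\rangle_1=\lambda_n\delta_{mn}$ gives, for every $n\in\N$, the scalar identity
$$(\lambda_n+C_u)\alpha_n=0.$$
Since the eigenvalues $\lambda_n$ are assumed simple, the real number $-C_u$ can coincide with at most one of them; hence $\alpha_n\neq0$ for at most one index $n$, and every nontrivial solution is necessarily \emph{unimodal}, of the form $u=\alpha_n e_n$ with $\alpha_n\neq0$ and the constraint $\lambda_n=-C_u$.

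Next I would make this constraint explicit, which is where the nonlinearity gets resolved. From \eqref{bis} and $\|e_n\|_1^2=\lambda_n$ one has $C_u=\beta+\cz\lambda_n\alpha_n^2$, so $\lambda_n=-C_u$ becomes $\cz\lambda_n\alpha_n^2=-\beta-\lambda_n$, that is,
$$\alpha_n^2=\frac{-\beta-\lambda_n}{\cz\lambda_n}.$$
Since $\cz>0$ and $\lambda_n>0$, this equation has a nonzero real root if and only if $\lambda_n<-\beta$, i.e.\ $n\in\mathbb{E}$, in which case the two roots are precisely $\alpha_n=\pm\sqrt{(-\beta-\lambda_n)/(\cz\lambda_n)}$, yielding the claimed pair $u_n^\pm$. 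Conversely, a direct substitution shows that each such $u_n^\pm$ with $n\in\mathbb{E}$ does solve $Au+C_uu=0$. Solutions attached to distinct modes are supported on different one-dimensional subspaces, hence distinct, and for a fixed $n$ the two sign choices give distinct nonzero amplitudes; therefore the nontrivial solutions are exactly $2|\mathbb{E}|$ in number. Finiteness of $\mathbb{E}$ is immediate from $\lambda_n\to\infty$.

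For completeness I would also record the degenerate case $\mathbb{E}=\emptyset$: then the displayed amplitude equation has no nonzero real solution for any $n$, so only $u=0$ remains — equivalently this follows from the Poincaré-inequality argument used in the proof of Proposition \ref{trivial}. The argument presents no real obstacle; the only points deserving care are the invocation of spectral simplicity to force unimodality and the elementary sign discussion of the amplitude equation, both of which are entirely analogous to (and simpler than) the unimodal analysis already carried out for the double-beam system.
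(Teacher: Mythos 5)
Your argument is correct and is exactly the standard diagonalization-in-the-eigenbasis argument: the paper itself does not prove Theorem \ref{SS1} (it cites \cite{CZGP}), but your proof follows precisely the same pattern the paper uses for Theorem \ref{bocch}, namely projecting onto each $e_n$ to get $(\lambda_n+C_u)\alpha_n=0$, using distinctness of the eigenvalues to force unimodality, and then solving the resulting scalar amplitude equation. No gaps; the only cosmetic point is that what you really need is that the $\lambda_n$ are pairwise distinct (which the paper guarantees by listing them as a strictly increasing sequence of simple eigenvalues), rather than simplicity per se.
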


Concerning the case of single beams which rely on an elastic foundation, the result reads as follows.

\begin{theorem}
\label{bocch}
The nontrivial solutions of the single-beam equation
\begin{equation}
\label{bvv}
A^2 u + C_u Au + ku = 0
\end{equation}
can be either unimodal or bimodal (but not trimodal). In addition, the following hold.

\begin{itemize}
\item Equation \eqref{bvv} admits nontrivial unimodal solutions
if and only if the set
$$
\mathbb{F} = \left\{n: \frac{k}{\lambda_n} + \lambda_n < -\beta \right\}
$$
is nonempty. More precisely, for every $n\in \N$, one of the following disjoint situations occurs.

\begin{itemize}
\item If $n\in\mathbb{F}$, we have exactly 2 nontrivial unimodal solutions of the form
$$
u_n^{\pm} = \pm \sqrt{\frac{1}{\cz\lambda_n}\left(-\beta - \frac{k}{\lambda_n}-\lambda_n\right)}\, e_n.
$$
\item If $n\notin\mathbb{F}$ all the unimodal solutions involving the eigenvector $e_{n}$ are trivial.
\end{itemize}

\smallskip
\item Equation \eqref{bvv} admits nontrivial bimodal solutions if and only if the set
$$
\mathbb{G} = \{(n_1,n_2): n_1<n_2,\, \lambda_{n_1}+\lambda_{n_2} < -\beta \,\text{ and }\, \lambda_{n_1}\lambda_{n_2}=k \}
$$
is nonempty. More precisely, for every couple $(n_1,n_2) \in \N$ with $n_1<n_2$, one of the following
disjoint situations occurs.
\begin{itemize}
\item If $(n_1,n_2) \in \mathbb{G}$, we have exactly the (infinitely many) solutions of the form
$$
u= x e_{n_1} + y e_{n_2},
$$
for all $(x,y)\in\R^2$ satisfying the equality
$$
\cz x^2\lambda_{n_1} + \cz y^2\lambda_{n_2} + \lambda_{n_1} +\lambda_{n_2} +\beta = 0 \qquad \text{with}\qquad xy\neq0.
$$
\item If $(n_1,n_2) \notin \mathbb{G}$, there are no nontrivial bimodal solutions involving the eigenvectors
$e_{n_1}$ and $e_{n_2}$.
\end{itemize}
\end{itemize}
\end{theorem}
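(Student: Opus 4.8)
The plan is to run, in the present one-dimensional setting, exactly the scheme used for the double-beam system: expand $u$ in the eigenbasis of $A$, project the equation onto each $e_n$, and read off the (here much milder) algebraic constraints. Writing $u=\sum_n\alpha_n e_n$ and taking the inner product of \eqref{bvv} with $e_n$, and using $Ae_n=\lambda_n e_n$, one gets for every $n$ the scalar identity
$$
\alpha_n\big(\lambda_n^2+C_u\lambda_n+k\big)=0 .
$$
Thus, for each active mode ($\alpha_n\neq0$) the eigenvalue $\lambda_n$ must be a root of the quadratic $Q(\lambda)=\lambda^2+C_u\lambda+k$. Since $Q$ has at most two distinct roots and the $\lambda_n$ are simple, at most two Fourier modes of $u$ can be nonzero: every nontrivial solution is unimodal or bimodal, never trimodal. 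Moreover the roots of $Q$ have product $k>0$ and sum $-C_u$, so a nontrivial solution already forces $C_u<0$ and $C_u^2\geq 4k$.

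In the unimodal case $u=\alpha_n e_n$ with $\alpha_n\neq0$, the relation $Q(\lambda_n)=0$ reads $C_u=-\lambda_n-k/\lambda_n$, while by definition \eqref{bis} gives $C_u=\beta+\cz\|u\|_1^2=\beta+\cz\lambda_n\alpha_n^2$. Eliminating $C_u$ yields
$$
\alpha_n^2=\frac{1}{\cz\lambda_n}\Big(-\beta-\lambda_n-\frac{k}{\lambda_n}\Big),
$$
which has a nonzero real root precisely when $\lambda_n+k/\lambda_n<-\beta$, i.e.\ $n\in\mathbb{F}$; in that case one obtains exactly the two solutions $u_n^\pm$ of the stated form, and a direct substitution confirms they solve \eqref{bvv}. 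Finiteness of $\mathbb{F}$ is clear, since $\lambda_n\to\infty$ entails $\lambda_n+k/\lambda_n\to\infty$.

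In the bimodal case $u=\alpha_{n_1}e_{n_1}+\alpha_{n_2}e_{n_2}$ with $n_1<n_2$ and both coefficients nonzero, the values $\lambda_{n_1}$ and $\lambda_{n_2}$ are two distinct roots of $Q$ (distinct because the eigenvalues are simple), hence its only roots, and Vieta's formulas force $\lambda_{n_1}\lambda_{n_2}=k$ and $C_u=-(\lambda_{n_1}+\lambda_{n_2})$. Plugging the latter into $C_u=\beta+\cz(\lambda_{n_1}\alpha_{n_1}^2+\lambda_{n_2}\alpha_{n_2}^2)$ gives the ellipse equation
$$
\cz\alpha_{n_1}^2\lambda_{n_1}+\cz\alpha_{n_2}^2\lambda_{n_2}+\lambda_{n_1}+\lambda_{n_2}+\beta=0 ,
$$
which admits solutions with $\alpha_{n_1}\alpha_{n_2}\neq0$ exactly when $\lambda_{n_1}+\lambda_{n_2}<-\beta$; combined with $\lambda_{n_1}\lambda_{n_2}=k$, this is precisely the condition $(n_1,n_2)\in\mathbb{G}$, and then the solution set is the whole (infinite) arc of that ellipse lying in the open quadrants, each point of which conversely yields a genuine solution. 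Finiteness of $\mathbb{G}$ is immediate, since $\lambda_{n_1}\lambda_{n_2}=k$ forces $\lambda_{n_1}<\sqrt{k}$, and, by simplicity, at most one $n_2$ matches each admissible $n_1$.

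I do not anticipate a real obstacle: this is just the one-dimensional shadow of the analysis of \S\S3--6, with the cubic $P(\lambda)$ replaced by the quadratic $Q(\lambda)$ and the circle--ellipse systems collapsing to a single ellipse. The only points requiring some care are bookkeeping — verifying that the unimodal and bimodal families exhaust all nontrivial solutions and that the stated counts are sharp — and checking that the degenerate double-root configuration $C_u^2=4k$ (which occurs exactly when $\lambda_n=\sqrt{k}$) is consistent with, and already subsumed by, the unimodal discussion.
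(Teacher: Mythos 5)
Your proposal is correct and follows essentially the same route as the paper's proof: expand $u$ in the eigenbasis, observe that each active mode forces $\lambda_n$ to be a root of the quadratic $\lambda^2+C_u\lambda+k$ (hence at most two modes), and then close the system with the constitutive relation $C_u=\beta+\cz\|u\|_1^2$ to obtain the explicit unimodal amplitudes and the bimodal ellipse. The extra observations you add (Vieta's formulas in place of subtracting the two quadratic relations, and the finiteness of $\mathbb{F}$ and $\mathbb{G}$) are harmless refinements of the same argument.
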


Theorem \ref{bocch} has been proved in \cite{BoV}, in the concrete situation when $A=L$ (the Laplace-Dirichlet operator).
We present here a short proof, which is valid even in our abstract setting.

\begin{proof}[Proof of Theorem \ref{bocch}]
Let $u$ be a weak solution\footnote{Analogously to \eqref{weak}, $u\in\H^2$ is called
a {\it weak solution} to \eqref{bvv} if, for every test $\phi\in \H^2$,
$$
\l u, \phi\r_2 + C_u \l u, \phi \r_1 + k \l u, \phi \r = 0.
$$
}
to \eqref{bvv}. Arguing as in the proof of Lemma \ref{gene}, that is, writing
$$
u = \sum_{n} \alpha_n e_n
$$
for some $\alpha_n\in\R$, we obtain, for every $n\in\N$, the identity
$$
\lambda_n^2 \alpha_n + C_u\lambda_n\alpha_n + k\alpha_n = 0.
$$
Hence, if $\alpha_n\not=0$, we infer that
$$
\lambda_n^2 + C_u\lambda_n  + k = 0.
$$
Since the equation above admits at most two distinct solutions
$\lambda_{n_i}$, we conclude that the nontrivial solutions to equation \eqref{bvv} can be either
unimodal or bimodal (but not trimodal).

First, let us look for unimodal solutions $u$ of the form
$$
u = \alpha_n e_n
$$
for a fixed $n\in\N$ and some coefficient $\alpha_n\neq0$. Analogously to the proof
of Theorem~\ref{unimodal}, from \eqref{bvv} we obtain
$$
\lambda_n^2 + (\beta + \cz\lambda_n \alpha_n^2)\lambda_n + k = 0,
$$
which implies
$$
\alpha_n^2 = \frac{1}{\cz\lambda_n} \Big(-\beta - \frac{k}{\lambda_n}-\lambda_n\Big).
$$
Therefore, there exist nontrivial unimodal solutions (explicitly computed) if and only if $n\in\mathbb{F}$.

Next, let us look for bimodal solutions $u$ of the form
$$
u=\alpha_{n_1} e_{n_1} + \alpha_{n_2} e_{n_2}
$$
with $n_1<n_2\in\N$ and $\alpha_{n_i}\in\R\setminus\{0\}$.
Similarly to the previous situation, from \eqref{bvv} we obtain the system
$$
\begin{cases}
\lambda_{n_1}^2 + C_u \lambda_{n_1} + k = 0,\\
\lambda_{n_2}^2 + C_u \lambda_{n_2} + k = 0.
\end{cases}
$$
Hence
$$
\lambda_{n_1}\lambda_{n_2}=k
$$
and the value $C_u$ is determined by \eqref{bis}, which yields the relation
$$
\cz\alpha_{n_1}^2 \lambda_{n_1} + \cz\alpha_{n_2}^2 \lambda_{n_2} +\lambda_{n_1} + \lambda_{n_2} +\beta = 0.
$$
Therefore, there exist nontrivial bimodal solutions (explicitly computed) if and only if $(n_1,n_2)\in\mathbb{G}$.
\end{proof}

A closer look to Theorems \ref{SS1} and \ref{bocch} reveals
that the set of steady states of the double-beam system \eqref{MAIN}
is very rich, and by no means represents a
``double-copy" of the set of stationary solutions of a single-beam equation:

\begin{itemize}
\item According to \S\ref{sezioneUnimodal}, nonsymmetric unimodal solutions pop up, as well
as unimodal solutions for
which the elastic energy is not evenly distributed. This feature is
illustrated in the forthcoming pictures\footnote{The notation in the captions is the same as in
\S\ref{sezioneUnimodal}.}. Moreover, not only a double series of
bifurcations of the trivial solution occurs, but even buckled unimodal solutions suffer from a
further bifurcation (see Lemma~\ref{reale} and Fig.~\ref{secondafigura} of \S\ref{sezioneUnimodal}).

\smallskip
\item According to \S\ref{EES} and \S\ref{GB}, system \eqref{MAIN} admits infinitely many bimodal and
trimodal {\sc ee}-solutions, and also
finitely many nonsymmetric bimodal solutions of not equidistributed energy.
\end{itemize}

\begin{figure}[ht]
\includegraphics[width=13cm]{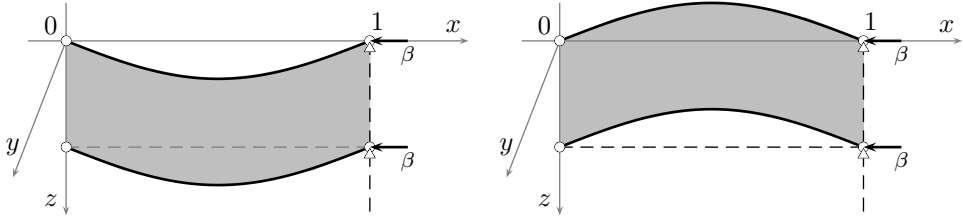}
\caption{Symmetric in-phase unimodal solutions $(\alpha_{1,1}^\pm,\alpha_{1,1}^\pm)$.}
\end{figure}

\begin{figure}[ht]
\includegraphics[width=13cm]{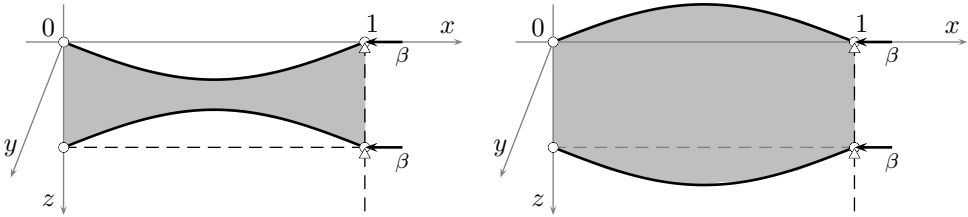}
\caption{Symmetric out-of-phase unimodal solutions $(\alpha_{1,2}^\pm,\alpha_{1,2}^\mp)$.}
\end{figure}

\begin{figure}[ht]
\includegraphics[width=13cm]{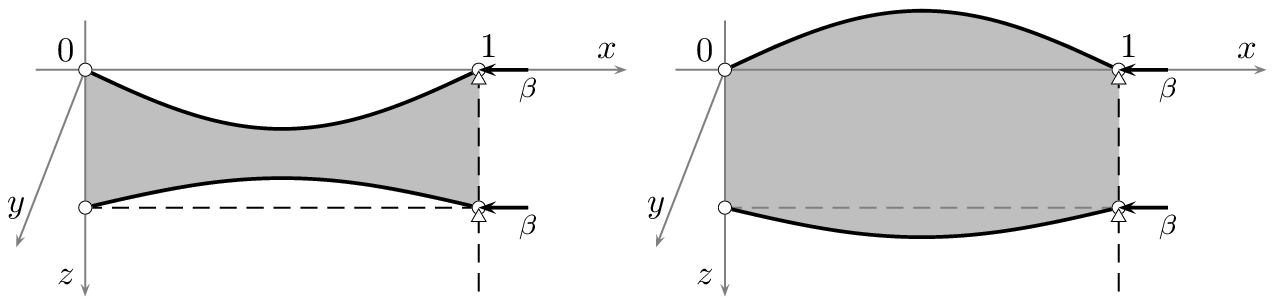}
\caption{Nonsymmetric out-of-phase unimodal solutions $(\alpha_{1,3}^\pm,\alpha_{1,4}^\mp)$.}
\end{figure}

\begin{figure}[ht]
\includegraphics[width=13cm]{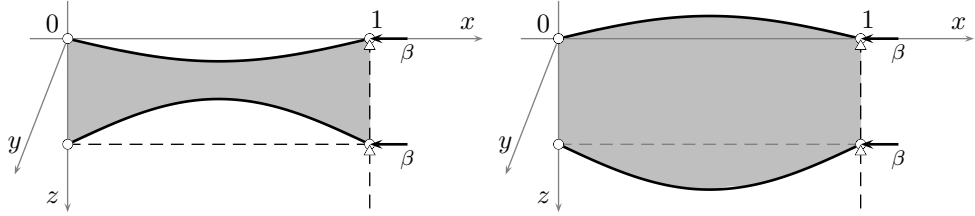}
\caption{Nonsymmetric out-of-phase unimodal solutions $(\alpha_{1,4}^\pm,\alpha_{1,3}^\mp)$.}
\end{figure}

%%%%%%%%%%%%%%%%%%%%%%%%%%%%%%%%%%%%%%%%%%%%%%%%%

%%%%%%%%%%%%%%%%%%%%%%%%%%%%%%%%%%%%%%%%%%%%%%%%%
\section*{Appendix:\ Dimensionless Models of Double-Beam Systems}
\setcounter{equation}{0}
\setcounter{subsection}{0}
\newtheorem*{remAPP}{Remark}

\noindent
Let us consider a thin and elastic Woinowsky-Krieger beam of natural length $\ell>0$,
uniform cross section $\Omega$,
and thickness $0<h\ll\ell$. The beam is supposed to be homogeneous, of constant mass density $\rho>0$
per unit volume, and
symmetric with respect to the vertical plane ($\xi$-$z$). Hence,
we can restrict our attention
to its rectangular section lying in the plane $y=0$.
Identifying the beam with such a section, we assume that
its middle line at rest occupies the interval $[0,\ell]$ of the $\xi$-axis.
According to the physical analysis carried out in \cite{CZGP,GN},
in the isothermal case the motion equation for the vertical deflection
of the midline of the beam
$$U:(\xi,\tau)\in[0,\ell]\times \R^+ \mapsto \R$$
reads
$$
\mathfrak{L} U
-\frac{E h}{2\ell^2(1-\nu^2)}\bigg(2D+
\int_0^\ell |\partial_\xi U(s)|^2\, \d s \bigg)\partial_{\xi\xi} U = \frac{G}{\ell|\Omega|}.
$$
Here,
$$
\mathfrak{L}=\rho \partial_{\tau\tau} - \frac{\rho h^2}{12}\partial_{\tau\tau\xi\xi}+
\frac{E h^3}{12\ell(1-\nu^2)}\,\partial_{\xi\xi\xi\xi}
$$
denotes the evolution operator, while
\smallskip
\begin{itemize}
\item $|\Omega|>0$ is the area of the cross section,
\smallskip
\item $E>0$ is the Young modulus (force per unit area),
\smallskip
\item $\nu\in(-1,\frac12)$ is the Poisson ratio, which is negative for auxetic materials,
\smallskip
\item $D\in\R$ is the axial displacement at the right end of the beam,
\smallskip
\item $G:[0,\ell]\times \R^+ \to \R$ is the vertical body force applied on the section $\Omega$.
\end{itemize}

\smallskip
\noindent
We point out that the model is obtained by supposing the beam slender (i.e.\ $h\ll \ell$),
and the modulus of the axial
displacement $D$ small when compared to the length of the beam (i.e.\ $|D|\ll \ell$ as well).
See also \cite{CIA,CIAGRA,LL} for more details.

\smallskip
Assuming that $G$ is due to the distributed and
mutual elastic action exerted between two equal Woinowsky-Krieger beams
with vertical deflections $U=U(\xi,\tau)$ and $V=V(\xi,\tau)$,
respectively, we let
$$
G(\xi,\tau) = - \varkappa\big[U(\xi,\tau)-V(\xi,\tau)\big],
$$
being $\varkappa>0$ the uniform stiffness (force per unit length) of the elastic core.
In this situation, the model describing the motion of the resulting elastically-coupled
extensible double-beam nonlinear system becomes
$$
\begin{cases}
\displaystyle
\mathfrak{L}U
-\frac{E h}{2\ell^2(1-\nu^2)}\Big(2D+
\int_0^\ell |\partial_\xi U(s)|^2\, \d s\Big)\,\partial_{\xi\xi} U +
\frac{\varkappa}{\ell|\Omega|}(U-V)=0,\\\noalign{\vskip1.3mm}
\displaystyle
\mathfrak{L}V
-\frac{E h}{2\ell^2(1-\nu^2)}
\Big(2D+\int_0^\ell |\partial_\xi V(s)|^2\, \d s\Big)\,\partial_{\xi\xi} V
- \frac{\varkappa}{\ell|\Omega|}(U-V)=0.
\end{cases}
$$
In order to rewrite the system in dimensionless form,
we exploit the fact that the two beams have the same structural parameters.
In particular, $\ell$ is viewed as the common {\it characteristic length} of the beams, while
the {\it characteristic time} $\tau_0$ is obtained by means
of the well-known shear wave velocity $c_0$ in bulk
elasticity, given by
$$
c_0=\sqrt{\frac{E}{2\rho(1+\nu)}}.
$$
Then, the characteristic time $\tau_0$ is equal to the ratio $\ell/c_0$. Explicitly,
$$\tau_0=\sqrt{\frac{2 \ell^2\rho(1+\nu)}{E}}.$$
Consequently, introducing the dimensionless space and time variables
$$
x= \frac{\xi}{\ell}\in[0,1]\and
t=\frac{\tau}{\tau_0}\in\R^+,
$$
along with the rescaled unknowns $u,v:[0,1]\times\R^+\to \R$ defined as
$$
u(x,t)= \frac{U(\ell x,\tau_0 t)}{\ell}\and
v(x,t)= \frac{V(\ell x,\tau_0 t)}{\ell},
$$
we end up with the dimensionless model
$$
\begin{cases}
\displaystyle
\frac{\ell (1-\nu)}{h}\Big(\partial_{tt}
- \frac{h^2}{12\ell^2}\partial_{ttxx}\Big)u +
\delta\partial_{xxxx} u
-\big(\chi +\|\partial_x u \|^2\big)\partial_{xx} u +\kappa (u-v)=0,\\\noalign{\vskip1.7mm}
\displaystyle
\frac{\ell (1-\nu)}{h}\Big(\partial_{tt}
- \frac{h^2}{12\ell^2}\partial_{ttxx}\Big) v + \delta\partial_{xxxx} v
-\big(\chi +\|\partial_x v \|^2\big)\partial_{xx} v -\kappa(u-v)=0,
\end{cases}
$$
where $\|\cdot\|$ denotes the $L^2$-norm on the unit interval $[0,1]$,
and
$$
\delta=\frac{h^2}{6\ell^2}>0,\qquad
\chi= \frac{2D}{\ell}\in\R,\qquad
\kappa=\frac{2\varkappa\ell^2(1-\nu^2)}{E |\Omega|h}>0.
$$
Under reasonably physical assumptions on the stiffness $\varkappa$ of the elastic core, and since $D$
and $h$ are comparable, we may conclude that $|\chi|$ and $\kappa$ share the same order of magnitude $h/\ell$,
whereas $\delta$ is much smaller. Accordingly,
$|{\chi}/{\delta}|$ and ${\kappa}/{\delta}$ may assume large values, 
for their order of magnitude is $\ell/h\gg1$. Hence, all the stationary solutions 
exhibited in this paper are physically consistent.
%%%%%%%%%%%%%%%%%%%%%%%%%%%%%%%%%%%%%%%%%%%%%%%%%

%%%%%%%%%%%%%%%%%%%%%%%%%%%%%%%%%%%%%%%%%%%%%%%%%

%%%%%%%%%%%%%%%%%%%%%%%%%%%%%%%%%%%%%%%%%%%%%%%%%%
\end{document}